\numberwithin{equation}{section}
\newcommand{\Cd}{\mathcal{D}}
\newcommand{\Ci}{\mathcal{I}}
\newcommand{\Co}{\mathcal{O}}
\newcommand{\Cs}{\mathcal{S}}
\newcommand{\Cx}{\mathcal{X}}
\newcommand{\Cy}{\mathcal{Y}}
\newcommand{\Cn}{\mathcal{N}}
\newcommand{\Cl}{\mathcal{L}}
\newcommand{\Cb}{\mathcal{B}}
\newcommand{\Ce}{\mathcal{E}}
\newcommand{\Cw}{\mathcal{W}}
\newcommand{\Cv}{\mathcal{V}}
\newcommand{\Ct}{\mathcal{T}}
\newcommand{\Supp}{\mathrm{Supp}}
\newcommand{\vol}{\mathrm{vol}}
\theoremstyle{plain} 
\newtheorem{thm}{Theorem}[section] 
\newtheorem{lemma}[thm]{Lemma}
\newtheorem{prop}[thm]{Proposition}
\newtheorem{cor}[thm]{Corollary}
\theoremstyle{definition} 
\newtheorem{defn}[thm]{Definition} 
\theoremstyle{remark} 
\newtheorem{rem}[thm]{Remark}
\begin{document}

	\title{Locally stable degenerations of log Calabi-Yau pairs}
	\author{Junpeng Jiao}

	\address{Yau Mathematical Sciences Center, Tsinghua University, Beijing, China}
	\email{jiao$\_$jp@tsinghua.edu.cn}
	
	\date{\today}

	\begin{abstract}
		We study the birational boundedness of special fibers of log Calabi-Yau fibrations and Fano fibrations. We show that for a locally stable family of Fano varieties or polarised log Calabi-Yau pairs over a curve, if the general fiber satisfies some natural boundedness conditions, then every irreducible component of the special fiber is birationally bounded. 
	\end{abstract}
	
	\maketitle
	Throughout this paper, we work over the complex number field $\mathbb{C}$.
	\section{Introduction}
	
	The notion of locally stable morphisms, firstly defined for surfaces in \cite{KSB88} and defined for all dimensions in \cite{Kol23}, is very important in the moduli of varieties. By a locally stable degeneration of a set of varieties $\mathscr{C}$, we mean the special fiber of a locally stable morphism whose general fiber is in $\mathscr{C}$. It is natural to ask if $\mathscr{C}$ is a bounded family, then are all of their locally stable degenerations in a bounded family?
	
	In this paper, we use singularities of log pairs and moduli of polarised log Calabi-Yau pairs to study the birational boundedness of special fibers of Fano fibrations and log Calabi-Yau fibrations. The boundedness of $\epsilon$-lc Fano varieties is studied in \cite{Bir19} and \cite{Bir21} and the boundedness of $(d,c,v)$-polarised Calabi-Yau pairs is studied in \cite{Bir23}. The first result shows locally stable degeneration of these two class of pairs is birationally bounded (see Definition \ref{birationally bounded}).
	
	\begin{thm}\label{Main theorem, locally stable morphism}
		Fix a natural number $d$ and positive rational numbers $c,v,\epsilon$. Let $X$ be a quasi-projective normal variety, $f:(X,\Delta)\rightarrow C$ be a locally stable morphism of relative dimension $d$ over a smooth curve $C$. Suppose either
		\begin{enumerate}
			\item $-(K_X+\Delta)$ is ample over $C$, the general fiber $(X_g,\Delta_g)$ is $\epsilon$-lc and $\mathrm{coeff}(\Delta)\subset c\mathbb{N}$, or
			\item $K_X+\Delta\sim_{\mathbb{Q},C}0$ and there is a divisor $N$ on $X$ such that the general fiber $(X_g,\Delta_g),N_g$ is a $(d,c,v)$-polarised Calabi-Yau pair (see Definition \ref{definition of log Calabi-Yau pairs}).
		\end{enumerate}
		
		Then every irreducible component of $X_0$ is birationally bounded for any closed point $0\in C$.
	\end{thm}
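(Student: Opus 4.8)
The plan is to use adjunction to pass from $X_0$ to its components, reduce each of them to an lc pair of dimension $d$ carrying a distinguished divisor of bounded volume, and then feed this into the cited boundedness results.

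\textbf{Reductions and adjunction.} Since $f$ is locally stable over the smooth curve $C$, the pair $(X,\Delta+X_0)$ is lc and $X_0$ is reduced (a fibre component with coefficient $>1$ would violate log canonicity, while $K_{X/C}+\Delta$ is $\mathbb{Q}$-Cartier by definition). Fix a component $X_0^i\subseteq X_0$ and let $\nu^i\colon S^i\to X_0^i$ be its normalization. Adjunction for the slc pair $X_0$ followed by adjunction for the component $X_0^i$ produces $\Theta^i\ge 0$ with $K_{S^i}+\Theta^i=\nu^{i*}\big((K_X+\Delta+X_0)|_{X_0^i}\big)$, where $(S^i,\Theta^i)$ is lc of dimension $d$ and $\mathrm{coeff}(\Theta^i)$ lies in a fixed finite set $I=I(c)$ (conductor divisors get coefficient $1$, the remaining coefficients come from the hyperstandard set attached to $c\mathbb{N}$). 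As $X_0^i$ maps to the point $0$ we have $X_0|_{X_0^i}\sim_{\mathbb{Q}}0$, so $K_{S^i}+\Theta^i=\nu^{i*}\big((K_X+\Delta)|_{X_0^i}\big)$. Hence in case (1), $-(K_X+\Delta)$ being ample over $C$ gives that $-(K_{S^i}+\Theta^i)$ is the finite pullback of an ample divisor, so ample; in case (2), $K_X+\Delta\sim_{\mathbb{Q},C}0$ gives $K_{S^i}+\Theta^i\sim_{\mathbb{Q}}0$. In case (2) I also replace $(X,\Delta)$ by a $\mathbb{Q}$-factorial dlt modification over $C$: this is crepant for $K_X+\Delta$ (so keeps $f$ locally stable), adds only coefficient‑$1$ boundary components, keeps the general fibre a polarised Calabi--Yau pair after pulling back $N$, and replaces each $X_0^i$ by a birational component of the new $X_0$.

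\textbf{Bounding the volume.} In case (1) the general fibre $(X_g,\Delta_g)$ is an $\epsilon$-lc Fano pair with coefficients in $c\mathbb{N}$, hence lies in a bounded family by \cite{Bir19,Bir21}, so $\mathrm{vol}\big(-(K_{X_g}+\Delta_g)\big)\le v_1(d,c,\epsilon)$; in case (2), $(X_g,\Delta_g),N_g$ is a $(d,c,v)$-polarised Calabi--Yau pair, bounded by \cite{Bir23}, with $\mathrm{vol}(N_g)\le v$. I transport these bounds to $X_0^i$ using invariance of (rational) intersection numbers in the flat family $f$: in case (1) apply it to the $\mathbb{Q}$-Cartier divisor $L=-(K_{X/C}+\Delta)$, and in case (2) first run a relative MMP over $C$ for $K_X+\Delta+\delta N$ with $0<\delta\ll1$ — since $K_X+\Delta\sim_{\mathbb{Q},C}0$ this is an $N$-MMP, it is $(K_X+\Delta+X_0)$-trivial (hence crepant, preserving the lc pair $(X,\Delta+X_0)$ and local stability), and it terminates because $N$ is relatively big — arriving at $\bar f\colon(\bar X,\bar\Delta)\to C$ on which $\bar N$ is relatively semiample, then apply invariance to $\bar N$. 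Because $X_0=\sum_i X_0^i$ with distinct components and $\nu^i$ is birational, $(-(K_{S^i}+\Theta^i))^d\le \mathrm{vol}(-(K_{X_g}+\Delta_g))$ in case (1), while in case (2) the MMP contracts no component of $X_0$ (so each $X_0^i$ is birational to a component $\bar X_0^{j(i)}$ of $\bar X_0$) and $(\bar N|_{\bar S^{j}})^d\le\mathrm{vol}(N_g)$ for every component $\bar X_0^j$.

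\textbf{Conclusion.} In case (1), $(S^i,\Theta^i)$ is an lc pair of dimension $d$ with $-(K_{S^i}+\Theta^i)$ ample of volume $\le v_1$ and coefficients in $I$; such weak log Fano pairs are log birationally bounded (the bounded volume of the anti–log‑canonical divisor together with the DCC/finite set of coefficients bounds the log birational model, by the Hacon--McKernan--Xu/Birkar arguments), so $X_0^i$ is birationally bounded. In case (2), $(\bar S^j,\bar\Theta^j)$ is an lc Calabi--Yau pair of dimension $d$ with coefficients in $I$ and with the nef divisor $\bar N|_{\bar S^j}$ of volume $\le v$; once one knows $\bar N|_{\bar S^j}$ is big, the pair $(\bar S^j,\bar\Theta^j)$ together with (the relatively ample model of) $\bar N|_{\bar S^j}$ is a $(d,c',v)$-polarised Calabi--Yau pair, hence bounded by \cite{Bir23}, and therefore $X_0^i$ is birationally bounded. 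To get bigness I expect to argue that the relative ample model $\bar X\to Z$ of $\bar N$ over $C$ contracts no component of $\bar X_0$, using that we may take $N\ge0$ with no vertical components (so $N|_{X_0^i}\ge0$) together with $\bar X_0|_{\bar X_0^j}\sim_{\mathbb{Q}}0$, and an induction on the number of components of $X_0$: the quotient family $(Z,\Delta_Z)\to C$ is again locally stable with the same polarised general fibre and a strictly smaller special fibre whenever a component is contracted, and any contracted component is a Calabi--Yau fibration over a (birationally bounded, by induction) component of $Z_0$.

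\textbf{Main obstacle.} The hard part will be the bigness/contraction analysis in case (2): showing that the polarization $N$, after being made relatively semiample by the MMP, restricts to a big divisor on each component of the special fibre — equivalently that no component is contracted by its relative ample model. Everything else is a formal combination of inversion of adjunction, invariance of intersection numbers in flat families, and the boundedness theorems of \cite{Bir19,Bir21,Bir23}.
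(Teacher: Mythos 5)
Your reduction steps are mostly sound (adjunction to the normalizations $(S^i,\Theta^i)$ of the fibre components, the volume bound $\bigl(-(K_{S^i}+\Theta^i)\bigr)^d\le L^d\cdot X_g$ via flatness), but the step you then lean on in case (1) is a genuine gap: the claim that lc (weak) log Fano pairs of dimension $d$ with coefficients in a fixed DCC set and bounded anti-log-canonical volume are log birationally bounded is not a quotable theorem, and it does not follow from the Hacon--McKernan--Xu machinery (which requires $K+\Delta$ big, so that a bounded multiple of $K+\Delta$ itself gives the birational map) nor from \cite{Bir19,Bir21} (which require $\epsilon$-lc). The pairs you produce are honestly lc and not klt: a component of $X_0$ can be, say, a cone over a polarised Calabi--Yau pair, and birational boundedness of such lc log Fanos is of the same depth as the polarised Calabi--Yau boundedness of \cite{Bir23} and as the theorem itself; at this point you have essentially restated the problem. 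The paper never invokes any boundedness of lc Fanos: instead case (1) is reduced to case (2) by constructing a relative complement (Theorem \ref{Birkar:log Calabi-Yau fibrations Theorem 1.10} via Lemma \ref{relative complement} and Lemma \ref{1.3 implies 1.2}), i.e.\ by replacing the ample $-(K_X+\Delta)$ with a crepant log Calabi--Yau structure to which the case (2) argument applies.

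In case (2) the gap is the one you flag yourself, and it is not a technicality: $N$ is only assumed ample on the general fibre, so after your relative $N$-MMP the restriction of $\bar N$ to a component of $\bar X_0$ may fail to be big, and your inductive scheme only exhibits a contracted component as (birational to) a fibration with Calabi--Yau-type fibres over a birationally bounded base; birational boundedness of the total space of such a fibration is exactly the hard point and does not follow formally. Even when the restriction is big and nef you are not literally in the $(d,c,v)$-polarised setting of \cite{Bir23}: the adjoint coefficients on $S^i$ lie in a hyperstandard set rather than in $c'\mathbb{N}$, and the polarization is nef and big of volume $\le v$ rather than ample of volume $=v$, so additional work is needed there too. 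The paper's route for (2) is different and avoids these issues: it embeds the generic part of the family into the fine moduli space of strongly embedded polarised slc Calabi--Yau families, applies semistable reduction over that moduli base, pulls back over a finite cover $\bar C\to C$, and uses the toroidal computation (Lemma \ref{log canonical center is projective bundle}) to show that every relevant lc place is birationally $\mathbb{P}^r\times V$ with $V$ an lc centre of a fibre of a bounded family (Lemma \ref{lc centers form a bounded family}); local stability is used only to force the covering degree from Lemma \ref{mulplcity of ramified cover} to be $1$, which is what turns the finite-degree statement of Theorem \ref{Main theorem, flat morphism} into birational boundedness. Until you either prove the abstract boundedness statement you rely on in case (1) or close the bigness/induction argument in case (2), the proposal does not yield the theorem.
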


	If the morphism is not locally stable, we have a weaker result.
	
	\begin{thm}\label{Main theorem, flat morphism}
		Fix a natural number $d$ and positive rational numbers $c,v,\epsilon$. Then there exists a natural number $l$ and a bounded family of projective varieties $\mathcal{W}\rightarrow \Ct$, such that:
		
		Let $X$ be a normal quasi-projective variety, $(X,\Delta)$ is a log canonical pair. $f:X\rightarrow C$ be a fibration of relative dimension $d$ over a smooth curve $C$. Suppose either
		\begin{enumerate}
			\item $-(K_X+\Delta)$ is ample over $C$ and the general fiber $(X_g,\Delta_g)$ is $\epsilon$-lc, or
			\item $K_X+\Delta\sim_{\mathbb{Q},C}0$ and there is a divisor $N$ on $X$ such that the general fiber $(X_g,\Delta_g),N_g$ is a $(d,c,v)$-polarised Calabi-Yau pair.
		\end{enumerate}
		Then for any closed point $0\in C$, if $P$ a log canonical place of $(X,\Delta+\mathrm{lct}(X,\Delta;f^*0)f^*0)$, there is a closed point $t\in \Ct$ and a finite dominant rational map $\Cw_t\dashrightarrow P$ whose degree is a factor of $\mathrm{min}\{l,\mathrm{mult}_{P}f^*0!\}$. ($\mathrm{mult}_{P}f^*0!$ is the factorial of $\mathrm{mult}_{P}f^*0$.)
		
	\end{thm}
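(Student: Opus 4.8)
The plan is to use adjunction to replace $P$ by a log Calabi--Yau or weak log Fano pair of dimension $\le d$, to bound that pair by Theorem~\ref{Main theorem, locally stable morphism} together with \cite{Bir19} and \cite{Bir23}, and to keep track of the covers hidden in ``birationally bounded''. Write $\lambda:=\mathrm{lct}(X,\Delta;f^*0)$ and $m:=\mathrm{mult}_P f^*0\ge 1$. Since $(X,\Delta+\lambda f^*0)$ is log canonical it admits a $\mathbb{Q}$-factorial dlt modification $h\colon (Y,\Delta_Y)\to (X,\Delta+\lambda f^*0)$, which can be arranged so that $P$ is a component of $\lfloor\Delta_Y\rfloor$; set $g:=f\circ h$. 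Then $P$ occurs in $Y_0=g^*0$ with multiplicity $m$, and the elementary identity $\mathcal{O}_Y(Y_0)|_P\cong\mathcal{O}_P$ gives $g^*0|_P\sim 0$. Hence adjunction along $P$ yields a log canonical pair $(P,\Delta_P)$ with $K_P+\Delta_P=(K_Y+\Delta_Y)|_P\sim_{\mathbb{Q}}h^*(K_X+\Delta)|_P$, the $\lambda g^*0$-term contributing nothing.

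In case (2), $K_X+\Delta\sim_{\mathbb{Q},C}0$ forces $K_P+\Delta_P\sim_{\mathbb{Q}}0$, so $(P,\Delta_P)$ is a log Calabi--Yau pair of dimension $d$. In case (1), $-(K_X+\Delta)$ is $f$-ample, so $-(K_P+\Delta_P)\sim_{\mathbb{Q}}h^*(-(K_X+\Delta))|_P$ is nef and semiample; taking the morphism $p\colon P\to Z$ it defines, either $p$ is birational and $(P,\Delta_P)$ is a weak log Fano, or $p$ has positive-dimensional fibres, each of which is log Calabi--Yau. In every case one wants a polarising divisor making $(P,\Delta_P)$ — or the $p$-fibres and the base $Z$ separately — members of a bounded family: in case (2) restrict (a pull-back of) $N$ to $P$ and run effective adjunction / the canonical bundle formula to manufacture a $(d,c',v')$-polarisation and apply \cite{Bir23}; in case (1) use $-(K_P+\Delta_P)$ itself, whose volume is controlled by the volume of $-(K_{X_g}+\Delta_g)$ — bounded because $(X_g,\Delta_g)$ is $\epsilon$-lc Fano, by \cite{Bir19} — and then conclude by induction on $d$ as in Theorem~\ref{Main theorem, locally stable morphism}, bounding $Z$ (a subvariety of a fibre, handled by a lower-dimensional instance) and the $p$-fibres.

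Since $f$ is only a fibration, Theorem~\ref{Main theorem, locally stable morphism} and \cite{Bir19}/\cite{Bir23} do not apply to $g\colon Y\to C$ directly; I would remedy this by relating $g$ to a locally stable model, obtained after a base change $C'\to C$ totally ramified over $0$ followed by normalization and a relative MMP, and tracking the divisorial valuation $P$ through this process. The base change is an isomorphism away from $0$, so the general fibre is unchanged and still satisfies the hypotheses, and $P$ is seen on $C'$ through valuations birational to it. The cover $\mathcal{W}_t\dashrightarrow P$ and the bounds on its degree then come from the boundedness input: the degree divides a universal constant $l$ — a common bound for the index of the polarised log Calabi--Yau pair, the number of irreducible components, and the degrees of the gluing maps in the three boundedness theorems — and it divides $m!$ because the only covering that intervenes is governed by the multiplicity $m$ of $P$ in $g^*0$, its Galois group embedding into $S_m$; one then takes $\mathcal{W}\to\mathcal{T}$ to be the union of the finitely many bounded families above, enlarged to contain all these covers. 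The main obstacle is the middle paragraph: adjunction produces a different, so $\mathrm{coeff}(\Delta_P)$ need not lie in $c\mathbb{N}$ and the induced polarisation need not have bounded volume a priori; forcing these invariants into the range where \cite{Bir23} (resp.\ the induction in case (1)) applies — the only place where flatness of $f$ and the hypothesis on the \emph{general} fibre really enter — is the technical heart of the argument.
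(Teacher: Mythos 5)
There is a genuine gap, and it lies exactly where you flag it — but it is deeper than a coefficients bookkeeping problem, and the machinery you hint at (a curve-by-curve semistable reduction plus adjunction to $P$) cannot produce the stated uniform bound.

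First, the paper does not bound the adjunction pair $(P,\Delta_P)$ at all; instead it reduces case (1) to case (2) using a \emph{relative complement}: Lemma~\ref{relative complement} replaces $(X,\Delta)$ by a birational model $(Y,\Lambda_Y)$ with $l(K_Y+\Lambda_Y)\sim_C 0$, $(Y,\Lambda_Y)$ log canonical near~$0$, $P$ a component of $\Lambda_Y^{=1}$, and $Y_g\cong X_g$, and then manufactures a polarising divisor $N$ on $Y$ via log boundedness of the general fibre (Lemma~\ref{log bounded}). This sidesteps the adjunction coefficients issue entirely: one never has to control $\mathrm{coeff}(\Delta_P)$. Your paragraph on case (1) — contracting by $-(K_P+\Delta_P)$ and handling fibres and base separately — would face, for the base $Z$, the same uncontrolled-coefficients problem under the canonical bundle formula, and would require a fresh boundedness theorem for those generalised pairs; the acknowledged ``technical heart'' is not filled.

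Second, and more seriously, your source of the cover $\mathcal{W}_t\dashrightarrow P$ is a base change $C'\to C$ ramified over~$0$, followed by normalisation and MMP; but any such curve-by-curve construction gives a degree depending on $X\to C$, not a universal $l$. The paper obtains a \emph{universal} degree bound by applying semistable reduction (Theorem~\ref{Semistable reduction}) to the \emph{universal family} over the moduli space $\Cs$ of strongly embedded polarised Calabi--Yau pairs, so that the generically finite cover $\bar\Cs\to\Cs$ — and hence the universal constant $l$ — is produced once and for all. The given $C\to\Cs$ is then pulled back through this fixed cover. This shift from the individual family to the universal one is the decisive step, and it is absent from your proposal.

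Third, you do not have a precise replacement for Lemma~\ref{mulplcity of ramified cover}, which is what actually gives the factor $\min\{l,\mathrm{mult}_P f^*0!\}$: it is an elementary but specific computation comparing the ramification index of $C'\to C$ with the multiplicity $m_P$ along $P$, yielding that the degree of $\bar P\to P$ divides $\min\{r!,\,m_P!\}$. ``Galois group embeds into $S_m$'' is not a proof of this, and in fact the relevant group is a priori not even a subgroup of $S_m$ (one really needs the two-sided comparison through both $r!$ and $m_P!$). Finally, once the family and the cover are fixed, the paper still needs Lemma~\ref{log canonical center is projective bundle} — that log canonical places over a toroidal embedding are birationally $\mathbb{P}^r$-bundles over lc centres, which are themselves bounded by Lemma~\ref{lc centers form a bounded family} — to conclude $\bar P$ is birationally bounded; your outline has no analogue of this projective-bundle step and would instead need the unproved boundedness of the adjunction pairs.
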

	
	\noindent\textbf{Acknowledgement}. The author would like to thank his advisor Christopher D. Hacon and his postdoctoral advisor Caucher Birkar for their encouragement and constant support. He would also like to thank Jingjun Han, Chen Jiang, Xiaowei Jiang, and Jihao Liu for their helpful comments. This work was supported by grants from Tsinghua University, Yau Mathematical Science Center. 	
	
	\section{Preliminary}
	\subsection{Notations and basic definition.}  We will use the same notation as in \cite{KM98} and \cite{Laz04}.
	
	A sub-pair $(X,\Delta)$ consists of a normal quasi-projective variety $X$ and a $\mathbb{Q}$-divisor $\Delta$ such that $K_X+\Delta$ is $\mathbb{Q}$-Cartier.
	Let $(Y,\Delta_Y),(X,\Delta)$ be two sub-pairs and $h:Y\rightarrow X$ a birational morphism, we say $(Y,\Delta_Y)\rightarrow (X,\Delta)$ is a crepant birational morphism if $K_Y+\Delta_Y\sim_{\mathbb{Q}}h^*(K_X+\Delta)$, two sub-pairs $(X_i,\Delta_i),i=1,2$ are crepant birationally equivalent if there is a sub-pair $(Y,\Delta_Y)$ and two crepant birational morphisms $(Y,\Delta_Y)\rightarrow (X_i,\Delta_i),i=1,2$.
	
	A contraction is a projective morphism $f:X\rightarrow Z$ of quasi-projective varieties with $f_*\mathcal{O}_X=\mathcal{O}_Z$. If $X$ is normal, then so is $Z$, and the fibers of $f$ are connected.
	A fibration is a contraction $f:X\rightarrow Z$ of normal quasi-projective varieties with $\mathrm{dim}X>\mathrm{dim}Z$.
	
	In this paper we only consider locally stable morphisms over smooth varieties, the following definition comes from \cite[Corollary 4.55]{Kol23}.
	\begin{defn}
		
		Let $S$ be a smooth variety, $(X,\Delta)$ be a log canonical pair, and $f:X\rightarrow S$ be a morphism. We say that $f:(X,\Delta)\rightarrow S$ is locally stable if 
		the pair $(X,\Delta+f^*D)$ is semi-log-canonical for every snc divisor $D\subset S$.
		
		Since $S$ is smooth, $D$ is a Cartier divisor, this implies that $(X,\Delta)$ is semi-log-canonical.
		
	\end{defn}
	
	\begin{defn}\label{birationally bounded}
		We say that a set $\mathscr{X}$ of varieties is birationally bounded if there is a projective morphism $\Cw\rightarrow \Ct$, where $\Ct$ is of finite type, such that for every $X\in\mathscr{X}$, there is a closed point $t\in \Ct$ and a birational map $f:\Cw_t\dashrightarrow X$.
	\end{defn}

	\begin{thm}[{\cite[Theorem 1.9]{Bir23}}]\label{Birkar:log Calabi-Yau fibrations Theorem 1.10}
		Fix a natural number $d$ and a finite set of rational numbers $\Ci\subset [0,1]$. Then there exists a natural number $n$ depending only on $d,\Ci$ satisfying the following. Assume
		\begin{itemize}
			\item $(X,B)$ is a projective log canonical pair of dimension $d$,
			\item the coefficients of $B$ are in $\Ci$,
			\item $M$ is a semi-ample Cartier divisor on $X$ defining a contraction $f:X\rightarrow Z$,
			\item $X$ is of Fano type over $Z$,
			\item $M-(K_X+B)$ is nef and big, and
			\item $S$ is a non-klt centre of $(X,B)$ with $M|_{S}\equiv 0$.
		\end{itemize}
		Then there is a $\mathbb{Q}$-divisor $\Lambda \geq B$ such that
		\begin{itemize}
			\item $(X,\Lambda)$ is log canonical over a neighbourhood of $z:=f(S)$, and
			\item $n(K_X+\Lambda)\sim (n+2)M$.
		\end{itemize}
	\end{thm}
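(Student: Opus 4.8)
Write $m:=\mathrm{mult}_P f^*0\in\mathbb{Z}_{\ge 0}$ and $t_0:=\mathrm{lct}(X,\Delta;f^*0)$, so that $(X,\Delta+t_0f^*0)$ is log canonical and $P$ is one of its log canonical places. The plan is as follows. After a base change of the base curve of degree dividing $m!$, I realise a finite cover of $P$ as a \emph{multiplicity-one} fibre component of the new family. Passing to a dlt modification and restricting to that component by adjunction, I obtain a log Calabi--Yau (or log Fano) pair of dimension $d$ with coefficients in a fixed finite set. Finally I bound this pair birationally, using the boundedness of polarised log Calabi--Yau pairs / $\epsilon$-lc log Fano pairs \cite{Bir19,Bir21,Bir23} when the polarisation stays big on it, and Theorem~\ref{Birkar:log Calabi-Yau fibrations Theorem 1.10} when it degenerates. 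Composing the resulting birational map with the finite cover produces the map $\Cw_t\dashrightarrow P$, whose degree divides the base-change degree.

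If the centre of $P$ on $X$ is not contained in $X_0$, then $m=0$, $m!=1$, and $P$ is a log canonical place of $(X,\Delta)$ meeting a general fibre; restricting to a general fibre it becomes a log canonical place of $(X_g,\Delta_g)$, which lies in a bounded family by \cite{Bir19,Bir21} in case (1) and by \cite{Bir23} in case (2), and since the log canonical places of a bounded family of pairs are birationally bounded, so is $P$ (degree $1$). So from now on $m\ge 1$ and the centre of $P$ lies in $X_0$. Choose a finite morphism $\phi\colon C'\to C$ of smooth curves, étale over $C\setminus\{0\}$ and totally ramified over $0$ at a point $0'$ with ramification index $e$, where $e$ is a multiple of $m$ and a divisor of $m!$ (the exact value is dictated by the last step; $e=m$ works in the simplest situation, and in general iterating the construction keeps the accumulated degree a divisor of $m!$). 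Let $\pi\colon X':=(X\times_C C')^{\nu}\to X$ be the normalised base change, $f'\colon X'\to C'$, and $\Delta'$ the $\mathbb{Q}$-divisor with $K_{X'}+\Delta'=\pi^*(K_X+\Delta)$ (working with its effective part where necessary, which does not affect the log canonical places under consideration). The general fibre of $(X',\Delta')\to C'$ is isomorphic to a general fibre of $f$, so (1) or (2) still holds there with $N':=\pi^*N$, and $-(K_{X'}+\Delta')$ is $f'$-ample, resp. $K_{X'}+\Delta'\sim_{\mathbb{Q},C'}0$. The standard local computation with multiplicities shows that over the component of $\Supp X_0$ containing the centre of $P$ there is a component $P'$ of $X'_{0'}$ dominating $P$ with $\mathrm{mult}_{P'}f'^*0'=m/\gcd(m,e)=1$ and $P'\to P$ finite of degree a divisor of $e$; moreover $P'$ is a log canonical place of $(X',\Delta'+t_0'f'^*0')$ for $t_0':=\mathrm{lct}(X',\Delta';f'^*0')$, since crepancy together with the ramification formula along $0'$ only shifts the threshold while keeping $P'$ on the log canonical boundary.

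Now take a $\mathbb{Q}$-factorial dlt modification $g\colon(Y,\Delta_Y)\to(X',\Delta'+t_0'f'^*0')$; then $P'$ is a component of $\lfloor\Delta_Y\rfloor$, hence a multiplicity-one component of the fibre $Y_{0'}$ of $h:=f'\circ g$, and $K_Y+\Delta_Y\sim_{\mathbb{Q},C'}g^*(K_{X'}+\Delta')$. By adjunction, $(P',\Delta_{P'})$ with $K_{P'}+\Delta_{P'}=(K_Y+\Delta_Y)|_{P'}$ is a log canonical pair of dimension $d$ whose coefficients lie in a fixed finite set depending only on $d$ and $c$; and since $h$ contracts $P'$ to a point, the pullback of any $\mathbb{Q}$-divisor from $C'$ restricts to zero on $P'$, so in case (2) ($K_{X'}+\Delta'\sim_{\mathbb{Q}}f'^*L$) the pair $(P',\Delta_{P'})$ is log Calabi--Yau, and in case (1) it is log Fano or log Calabi--Yau (after adding a general relative complement one may in fact reduce case (1) to case (2)). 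Put $M:=g^*N'$ in case (2), and in case (1) let $M$ be a suitable multiple of $g^*(-(K_{X'}+\Delta'))$, which is nef and big over $C'$; after passing to a relative ample model we may assume $M$ semiample over $C'$, defining a contraction $\psi\colon Y\to Z$ over $C'$. If $M|_{P'}$ is big, then $(P',\Delta_{P'},M|_{P'})$ is a polarised log Calabi--Yau (or log Fano) pair of dimension $d$ with coefficients in a finite set, Cartier index dividing a fixed number, and $\vol(M|_{P'})\le v$ by invariance of volumes in the flat family $Y\to C'$; by \cite{Bir19,Bir21,Bir23} it lies in a bounded family, so $P'$ is birationally bounded. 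If $M|_{P'}$ is not big, then $\psi$ contracts $P'$ onto a proper subvariety of $Z$; running a relative MMP so that $Y$ becomes of Fano type over $Z$ and applying Theorem~\ref{Birkar:log Calabi-Yau fibrations Theorem 1.10} with non-klt centre $P'$ and polarising divisor $M$ yields $\Lambda\ge\Delta_Y$ and a natural number $n=n(d,c)$ with $(Y,\Lambda)$ log canonical near $\psi(P')$ and $n(K_Y+\Lambda)\sim(n+2)M$; restricting to $P'$ gives a log Calabi--Yau pair with $n(K_{P'}+\Lambda_{P'})\sim 0$, coefficients in $\tfrac1n\mathbb{Z}\cap[0,1]$, and a fibration onto $\psi(P')$ with lower-dimensional log Calabi--Yau fibres of bounded invariants, so $P'$ is again birationally bounded by the boundedness of log Calabi--Yau fibrations \cite{Bir23}. (When $e$ can be chosen so that $(X',\Delta')\to C'$ is locally stable in a neighbourhood of the centre of $P'$, one may instead quote Theorem~\ref{Main theorem, locally stable morphism} directly here.)

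In all cases one obtains a single finite-type family $\Cw\to\Ct$ (after finitely enlarging it) and a birational map $\Cw_t\dashrightarrow P'$; composing with $P'\to P$ gives a finite dominant rational map $\Cw_t\dashrightarrow P$ whose degree divides $e$, hence divides $m!$. Because $\Cw\to\Ct$ is of finite type, the degrees of all such maps lie in a finite set, and, taking $l$ to be its least common multiple, the degree also divides $l$; hence it divides $\gcd(l,m!)$, and therefore divides $\min\{l,m!\}$, since $\gcd(l,m!)$ divides whichever of $l$ and $m!$ is smaller. The main obstacle is the interaction between the base change and the non-big case: one must control the normalised base change finely enough to produce a multiplicity-one log canonical place over $P$ with an \emph{effective} adjunction boundary and, simultaneously, ensure that the degree of $P'\to P$ together with any further base change needed so that Theorem~\ref{Birkar:log Calabi-Yau fibrations Theorem 1.10} and the subsequent canonical-bundle-formula argument apply (with all constants depending only on $d$, $c$, and $\epsilon$ in case (1)) remains a divisor of $\mathrm{mult}_P f^*0!$. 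This bookkeeping with ramification indices and fibre multiplicities is exactly where the factorial enters and is the most delicate part of the argument.
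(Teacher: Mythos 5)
The statement you were asked to prove is not a result of this paper at all: it is quoted verbatim from Birkar's work on geometry and moduli of polarised varieties (\cite{Bir23}, Theorem 1.9) and is used in this paper only as a black box (notably in Lemma \ref{relative complement}). Its content is a uniform complement statement: for a Fano type contraction $f\colon X\to Z$ defined by a semi-ample Cartier divisor $M$ with $M-(K_X+B)$ nef and big, and a non-klt centre $S$ of $(X,B)$ with $M|_S\equiv 0$, there is $\Lambda\ge B$ with $(X,\Lambda)$ log canonical near $z=f(S)$ and $n(K_X+\Lambda)\sim (n+2)M$ for an $n$ depending only on $d$ and the coefficient set $\Ci$. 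Any proof has to go through the theory of bounded $n$-complements for Fano type morphisms (adjunction and lifting of complements from a neighbourhood of $z$, boundedness of complements with coefficients in a fixed finite set), and nothing of that machinery appears in your write-up; the paper itself offers no proof either, it simply cites Birkar.

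What you actually sketched is an argument for Theorem \ref{Main theorem, flat morphism}: a cyclic base change of the curve to reduce to a multiplicity-one log canonical place, a dlt modification, adjunction to that component, and boundedness of the resulting polarised log Calabi--Yau or Fano pair, with the bookkeeping that makes the degree of $\Cw_t\dashrightarrow P$ divide $\mathrm{min}\{l,\mathrm{mult}_Pf^*0!\}$. As an outline of the paper's main theorem this is not unreasonable (though the paper proceeds differently, via the fine moduli space of strongly embedded polarised Calabi--Yau families, semistable reduction and toroidal geometry, rather than by direct adjunction to the fibre component), but it cannot stand as a proof of the statement in question. Worse, your argument explicitly invokes Theorem \ref{Birkar:log Calabi-Yau fibrations Theorem 1.10} in the case where $M$ is not big on the component, so read as a proof of that very theorem it is circular. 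The gap is therefore total: the proposal proves a different statement and presupposes the one it was supposed to establish.
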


	\begin{lemma}\label{relative complement}
		Fix a natural number $d$, a positive rational number $c$, then there is a natural number $l$ depending only on $d,c$, such that if $f:X\rightarrow C$ is a fibration over a curve $C$, $(X,\Delta)$ is a log canonical pair such that
		\begin{itemize}
			\item $\mathrm{coeff} B\subset c\mathbb{N}$,
			\item $-(K_X+\Delta)$ is ample over $C$, and
			\item the general fiber $(X_g,\Delta_g)$ of $f$ is klt.
		\end{itemize}
		Then for any closed point $s\in C$ and any log canonical place $P$ of $(X,\Delta+\mathrm{lct}(X,\Delta;f^*(s))f^*(s))$, there exists a birational map $Y\dashrightarrow X$ and a $\mathbb{Q}$-divisor $\Lambda_Y$ satisfying the following properties:
		\begin{itemize}
			\item $l(K_Y+\Lambda_Y)\sim_C 0$.
			\item $(Y,\Lambda_Y)$ is log canonical near $s$.
			\item $P$ is a component of $\Lambda_Y^{=1}$.
			\item the general fiber $Y_g$ is isomorphic to $X_g$. 
		\end{itemize}
		\begin{proof}
			Let $g:X'\rightarrow X$ be a dlt modification of $(X,\Delta+\mathrm{lct}(X,\Delta;f^*(s))f^*(s))$ such that $P$ is a divisor on $X'$. Define $\Delta'$ and $D'$ by 
			$$K_{X'}+\Delta'\sim_{\mathbb{Q}}g^*(K_X+\Delta),\text{ and}$$
			$$K_{X'}+D'\sim_{\mathbb{Q}}g^*(K_X+\Delta+\mathrm{lct}(X,\Delta;f^*(s))f^*(s)).$$
			Because $-(K_X+\Delta)$ is ample over $C$, $-(K_{X'}+D')$ is semi-ample over $C$, we can choose a general member $B'\in |-(K_{X'}+D')/C|_{\mathbb{Q}}$   
			such that $(X',D'+B')$ is a dlt pair. 
			
			Since every $g$-exceptional divisor has log discrepancy 0 with respect to $(X',D')$, and every divisor on $X$ supported on $f^{-1}s$ has log discrepancy $<1$ with respect to $(X,\Delta+\mathrm{lct}(X,\Delta;f^*(s))f^*(s))$. Then $f'^{-1}s\subset \Supp D'$. Let $F':=\Supp D'^{<1} \cap f'^{-1}s$, where $f'$ denotes the natural morphism $f':X'\rightarrow C$. By \cite[Corollary 2.39]{KM98}, there exists a positive rational number $\delta\ll 1$ such that $(X',D'+\delta F'+B')$ is a dlt pair.
			
			Next, we run a $(K_{X'}+D'+\delta F'+B')$-MMP with scaling of an ample divisor. Because
			$$K_{X'}+D'+\delta F'+B'\sim_{\mathbb{Q},C}\delta F',$$
			by \cite[Theorem 1.8]{Bir12}, this MMP terminates with a model $f'':X''\rightarrow C$ such that $F''=0$ and $K_{X''}+D''+B''\sim_{\mathbb{Q},C}0$, where $D'',F''$ and $B''$ is the pushforward of $D', F'$ and $B'$. Because $\Supp F'=f'^{-1}s \cap \Supp D'^{<1}$ and $f'^{-1}s\subset \Supp D'$, we have $\mathrm{coeff}_QD''=1$ for every irreducible component $Q$ of $f''^{-1}s$. Notice that $\Supp P \not \subset \Supp F'$, $P$ is not contracted by $X'\dashrightarrow X''$.
			
			Notice that the general fiber of $(X,\Delta+B)$ is klt, for a general point $g\in C$, we have 
			$(X_g,\Delta_g)\cong (X'_g,D'_g)\cong (X''_g,D''_g)$, and $D''=\Delta''_h+\mathrm{red}(f''^{-1}s)$. 
			
			Because $(X'',D''+B'')$ is dlt and $\lfloor B''\rfloor=0$, by \cite[Corollary 2.39]{KM98}, we can choose $\delta'\ll 1$ such that $(X'',D''+(1+\delta'')B'')$ is a dlt pair, also since $\Supp D''\supset f''^{-1}s$, we may assume $(X'',D''-\delta' f''^*s+(1+\delta'')B'')$ is a klt pair. Because $B''$ is big over $C$, by the main theorem of \cite{BCHM10}, we can run a $(K_{X''}+D''-\delta' f''^*s+(1+\delta'')B'')$-MMP over $C$, which will terminates with a model $Y$ such that $K_Y+D_Y-\delta'f_Y^*s+(1+\delta')B_Y$ is nef and big over $C$, where $D_Y$ and $B_Y$ are the strict transform of $D''$ and $B''$, $f_Y$ is the natural morphism $Y\rightarrow C$.
			
			Since $K_Y+D_Y-\delta'f_Y^*s+(1+\delta')B_Y\sim_{\mathbb{Q},C}K_Y+D_Y+(1+\delta')B_Y\sim_{\mathbb{Q},0}\delta'B_Y\sim_{\mathbb{Q},C}-\delta'(K_Y+D_Y)$, $-(K_Y+D_Y)$ is nef and big over $C$. Also because $X''\dashrightarrow Y$ is a $(K_{X''}+D''+(1+\delta')B'')$-MMP, $(Y,D_Y+(1+\delta')B_Y)$ is dlt, hence $(Y,D_Y)$ is dlt. Notice that $X''\dashrightarrow Y$ is a $B''$-MMP and $\Supp P\not \subset \Supp B''$, $P$ is not contracted by $X''\dashrightarrow Y$.
			
			Because $D''=\Delta''_h+\mathrm{red}(f''^{-1}s)$, $\mathrm{coeff}D_Y\subset c\mathbb{N}$ and $f_Y^{-1}s\subset \Supp D_Y^{=1}$, then by Theorem \ref{Birkar:log Calabi-Yau fibrations Theorem 1.10}, there exists a $\mathbb{Q}$-divisor $\Lambda_Y\geq D_Y$ such that $l(K_Y+\Lambda_Y)\sim 0$ and $(Y,\Lambda_Y)$ is log canonical over a neighborhood of $s$. Because $P$ is a component of $D'^{=1}$ and not contracted by $X'\dashrightarrow X''\dashrightarrow Y$, then $P$ is a component of $D_Y^{=1}$, hence a component of $\Lambda_Y ^{=1}$. Finally, since each MMP is an isomorphism over the general point of $c$, we have $X_g\cong Y_g$. 
			
		\end{proof}
	\end{lemma}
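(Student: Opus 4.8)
The plan is to replace $(X,\Delta)$ by a birational model over $C$ on which Birkar's complement theorem, Theorem~\ref{Birkar:log Calabi-Yau fibrations Theorem 1.10}, can be invoked, arranging that the fibre $f^{-1}(s)$ sits inside a non-klt centre on which the polarising divisor is numerically trivial. First I would take a dlt modification $g\colon X'\to X$ of $(X,\Delta+\mathrm{lct}(X,\Delta;f^*s)f^*s)$ on which $P$ is a prime divisor, and write $K_{X'}+\Delta'\sim_{\mathbb Q}g^*(K_X+\Delta)$ and $K_{X'}+D'\sim_{\mathbb Q}g^*(K_X+\Delta+\mathrm{lct}(X,\Delta;f^*s)f^*s)$. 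Since $f^*s\sim_{\mathbb Q,C}0$, the divisor $-(K_{X'}+D')$ is $\mathbb Q$-linearly equivalent over $C$ to the pullback of the $C$-ample divisor $-(K_X+\Delta)$, hence semiample over $C$; picking a general $B'\in|{-}(K_{X'}+D')/C|_{\mathbb Q}$ yields a dlt pair $(X',D'+B')$ with $K_{X'}+D'+B'\sim_{\mathbb Q,C}0$. Comparing log discrepancies---every $g$-exceptional divisor has log discrepancy $0$ for $(X',D')$, while every divisor over $f^{-1}(s)$ has log discrepancy $<1$ for $(X,\Delta+\mathrm{lct}(X,\Delta;f^*s)f^*s)$---shows $f'^{-1}(s)\subset\Supp D'$, and $P$ lies in $D'^{=1}$ because it is an lc \emph{place}.

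Next I would force the whole fibre over $s$ into the reduced part of the boundary, which is exactly what Theorem~\ref{Birkar:log Calabi-Yau fibrations Theorem 1.10} needs in order to see a non-klt centre over $s$. Set $F':=\Supp D'^{<1}\cap f'^{-1}(s)$; after a small perturbation $(X',D'+\delta F'+B')$ is still dlt, and since $K_{X'}+D'+\delta F'+B'\sim_{\mathbb Q,C}\delta F'$ is supported on the fibre, a $(K_{X'}+D'+\delta F'+B')$-MMP over $C$ with scaling terminates by \cite[Theorem~1.8]{Bir12} on a model $f''\colon X''\to C$ with $F''=0$. Hence $\mathrm{coeff}_Q D''=1$ for every component $Q$ of $f''^{-1}(s)$, we still have $K_{X''}+D''+B''\sim_{\mathbb Q,C}0$, and $P$ is not contracted because $\Supp P\not\subset\Supp F'$. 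Since the general fibre of $(X,\Delta)$ is klt, this MMP is an isomorphism over the generic point of $C$, so $D''=\Delta''_h+\mathrm{red}(f''^{-1}(s))$.

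Then I would recover the Fano-type hypothesis over $C$, which the previous step may have broken. As $B''$ is big over $C$, after another small perturbation one can run a $(K_{X''}+D''-\delta' f''^*s+(1+\delta'')B'')$-MMP over $C$ (using \cite{BCHM10}), terminating on $Y$ with $K_Y+D_Y-\delta' f_Y^*s+(1+\delta'')B_Y$ nef and big over $C$; since over $C$ this divisor is $\mathbb Q$-linearly equivalent to $-\delta''(K_Y+D_Y)$, it follows that $-(K_Y+D_Y)$ is nef and big over $C$, $(Y,D_Y)$ is dlt, and $P$ survives because $\Supp P\not\subset\Supp B''$. Thus $Y$ is of Fano type over $C$, $\mathrm{coeff}\,D_Y\subset c\mathbb N$, $f_Y^{-1}(s)\subset\Supp D_Y^{=1}$, and $X_g\cong Y_g$. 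To finish, I would apply Theorem~\ref{Birkar:log Calabi-Yau fibrations Theorem 1.10} to $(Y,D_Y)$ with $Z=C$ and $M=f_Y^*A$ for $A$ sufficiently ample on $C$---so that $M$ is a semiample Cartier divisor defining $f_Y$, $M-(K_Y+D_Y)$ is nef and big, and a component of $f_Y^{-1}(s)$ is a non-klt centre on which $M$ is numerically trivial---and coefficient set $\mathcal I=(c\mathbb N\cap[0,1])\cup\{1\}$. This produces $\Lambda_Y\geq D_Y$ with $(Y,\Lambda_Y)$ log canonical near $s$ and $l(K_Y+\Lambda_Y)\sim(l+2)M$, hence $l(K_Y+\Lambda_Y)\sim_C 0$ since $M$ is pulled back from $C$, where $l$ is the number attached by Theorem~\ref{Birkar:log Calabi-Yau fibrations Theorem 1.10} to $d$ and $\mathcal I$ and so depends only on $d$ and $c$; since $\Lambda_Y\geq D_Y$ and $(Y,\Lambda_Y)$ is lc, $P\subset D_Y^{=1}\subset\Lambda_Y^{=1}$, which gives the last assertion, and $X_g\cong Y_g$ was noted above.

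The step I expect to be the main obstacle is the tension between the two MMPs in the middle of the argument: forcing $f^{-1}(s)$ into the reduced boundary (the $\delta F'$-step) tends to destroy the Fano-type/Mori-dream property over $C$ required by Birkar's theorem, so a second, carefully perturbed MMP over $C$ is needed to restore it, and at each stage one must check both that $P$ is not contracted---which is why it matters that $\Supp P$ avoids the supports of $F'$ and of $B''$---and that the boundary coefficients stay in the fixed finite set $c\mathbb N\cap[0,1]$, so that the resulting $l$ depends only on $d$ and $c$.
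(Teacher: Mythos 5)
Your proposal is correct and follows essentially the same route as the paper: dlt modification, a fiberwise MMP (via \cite[Theorem 1.8]{Bir12}) to push $f^{-1}(s)$ into the reduced boundary, a second perturbed MMP over $C$ (via \cite{BCHM10}) to restore the Fano-type condition while keeping $P$, and then Theorem \ref{Birkar:log Calabi-Yau fibrations Theorem 1.10} to produce $\Lambda_Y$. You are in fact slightly more explicit than the paper in how that theorem is invoked (taking $M=f_Y^*A$ and the coefficient set $(c\mathbb{N}\cap[0,1])\cup\{1\}$), which is a useful clarification rather than a deviation.
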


	\begin{lemma}\label{lc centers form a bounded family}
		Let $(\Cx,\Cd')\rightarrow \Cs$ be a locally stable morphism over a smooth variety $\Cs$, fix a $\mathbb{Q}$-divisor $\Cd\leq \Cd'$ such that $K_{\Cx}+\Cd$ is $\mathbb{Q}$-Cartier. Then the set 
		$$\{V\ |\ V\text{ is a log canonical center of }(\Cx_s,\Cd_s)\text{ for some closed point }s\in\Cs\}$$
		is bounded.
		\begin{proof}
			After passing to a stratification of $\Cs$, we may assume that $(\Cx,\Supp \Cd)\rightarrow \Cs$ has a fibrewise log resolution $\xi: \Cy\rightarrow \Cx$. Define $\Cd_{\Cy}$ by $K_{\Cy}+\Cd_{\Cy}\sim_{\mathbb{Q}}\xi^*(K_{\Cx}+\Cd)$, then we have
			$$K_{\Cy_s}+\Cd_{\Cy_s}\sim_{\mathbb{Q}}\xi^*(K_{\Cx_s}+\Cd_s),$$
			for any closed point $s\in \Cs$.
			It is easy to see that every log canonical center of $(\Cx_s,\Cd_s)$ is dominated by a log canonical center of $(\Cy_s,\Cd_{\Cy_s})$. 
			
			By construction, $(\Cy,\Cd_{\Cy})$ is log smooth over $\Cs$, denote its strata by $\Cv_i,i\in I$, then $\Cv_i\rightarrow\Cs$ is smooth for all $i \in I.$
			By an easy computation of discrepancy on log smooth pairs, 
			any log canonical center of $(\Cy_s,\Cd_{\Cy_s})$ is $V_{i}|_{\Cy_s}$ for some $i\in I$. Then any log canonical center of $(\Cx_s,\Cd_s)$ is isomorphic to $\xi(V_i)|_{\Cx_s}$ for some $i\in I$, and the set of families $\xi(V_i)\rightarrow \Cs,i\in I$ parametrizes all log canonical center of $(\Cx_s,\Cd_s)$, the result follows.
		\end{proof}
	\end{lemma}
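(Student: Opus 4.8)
The plan is to reduce, by Noetherian induction on $\Cs$, to a situation in which the whole family of pairs admits a simultaneous log resolution over the base, and then to read off the log canonical centres of the fibres from the combinatorics of a relative simple normal crossing divisor.

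First I would stratify $\Cs$. Since $\Cs$ is of finite type it suffices to prove the statement after replacing $\Cs$ by each member of a finite locally closed smooth stratification, so by Noetherian induction we may freely shrink $\Cs$ to a dense open subset. Combining generic flatness, generic smoothness, the existence of log resolutions, and spreading out, we may then assume there is a projective birational morphism $\xi:\Cy\rightarrow\Cx$ such that $\Cy\rightarrow\Cs$ is smooth and $\mathrm{Exc}(\xi)\cup\xi_*^{-1}\Supp\Cd$ is a relative simple normal crossing divisor over $\Cs$; in particular $\xi$ restricts to a log resolution $\xi_s:\Cy_s\rightarrow\Cx_s$ of $(\Cx_s,\Cd_s)$ for every closed point $s\in\Cs$. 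This is the step that genuinely uses local stability: it ensures that every fibre $\Cx_s$ is semi-log-canonical, that $K_{\Cx/\Cs}$ is $\mathbb{Q}$-Cartier and commutes with base change, and hence that the crepant computation below makes sense on each fibre.

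Next, define $\Cd_{\Cy}$ by $K_{\Cy}+\Cd_{\Cy}\sim_{\mathbb{Q}}\xi^*(K_{\Cx}+\Cd)$. Because $\Cy$ and $\Cx$ are flat over $\Cs$ with $K_{\Cy/\Cs}|_{\Cy_s}=K_{\Cy_s}$ and $K_{\Cx/\Cs}|_{\Cx_s}=K_{\Cx_s}$, and no fibre lies in the support of the divisors involved, restricting to fibres gives $K_{\Cy_s}+\Cd_{\Cy}|_{\Cy_s}\sim_{\mathbb{Q}}\xi_s^*(K_{\Cx_s}+\Cd_s)$ for every $s$. In particular every log canonical centre of $(\Cx_s,\Cd_s)$ is the $\xi_s$-image of a log canonical centre of the log smooth pair $(\Cy_s,\Cd_{\Cy}|_{\Cy_s})$.

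Finally I would invoke the combinatorial description of log canonical centres of a log smooth pair. Let $\Cv_i$, $i\in I$, be the finitely many closures of the strata of the relative snc divisor $\Supp\Cd_{\Cy}$; each $\Cv_i\rightarrow\Cs$ is smooth, and $\Cv_i|_{\Cy_s}$ is a stratum of $(\Cy_s,\Cd_{\Cy}|_{\Cy_s})$. Since the coefficients of $\Cd_{\Cy}$ along its components are locally constant on $\Cs$ (they are discrepancy numbers read off the log smooth model), the subset of $i\in I$ for which $\Cv_i|_{\Cy_s}$ is a log canonical centre is independent of $s$, and by the usual discrepancy computation on log smooth pairs every log canonical centre of $(\Cy_s,\Cd_{\Cy}|_{\Cy_s})$ is of this form. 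Hence every log canonical centre of $(\Cx_s,\Cd_s)$ equals $\xi(\Cv_i)|_{\Cx_s}$ for some such $i$, so the finitely many families $\xi(\Cv_i)\rightarrow\Cs$ parametrise all log canonical centres of all fibres, and boundedness follows. I expect the main obstacle to be the construction in the first two steps — producing a single birational model that is simultaneously a log resolution of every fibre, and checking that the crepant pullback commutes with restriction to fibres — which is exactly where local stability is indispensable, since for an arbitrary flat family neither would $K_{\Cx/\Cs}$ commute with base change nor would the fibres be log canonical.
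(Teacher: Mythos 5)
Your proposal follows the same route as the paper's proof: stratify $\Cs$ to obtain a fibrewise log resolution $\xi:\Cy\rightarrow\Cx$, define $\Cd_{\Cy}$ by crepant pullback, restrict to fibres, and identify the log canonical centres of each fibre with restrictions of the finitely many strata $\Cv_i$, so that the families $\xi(\Cv_i)\rightarrow\Cs$ give boundedness. Your additional remarks on spreading out and on the compatibility of the crepant pullback with restriction to fibres only make explicit what the paper leaves implicit, so the argument is essentially identical and correct.
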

	
	\begin{lemma}\label{mulplcity of ramified cover}
		Let $f:X\rightarrow T$ be a flat morphism from a normal variety to a smooth curve $T$. Suppose $\pi:S\rightarrow T$ is a ramified cover, $Y\rightarrow X\times_T S$ is the normalization of the main component and let $f_Y:Y\rightarrow S$ be the projection.

		Fix a closed point $0_T\in T$, let $0_S\in S$ be a preimage of $0_T$ in $S$. Suppose $P$ is an irreducible component of $f^*0_T$, and $Q$ is an irreducible component of the preimage of $P$ in $Y$ such that $f_Y(Q)=0_S$. Denote the ramified index of $\pi$ along $0_S$ by $r_S$, the multiplicity of $f^*0_T$ along $P$ by $m_P$. Then the degree of the finite morphism $\pi_Q:Q\rightarrow P$ is a factor of $\mathrm{min}\{r_S!,m_P!\}$.
		
		\begin{proof}
			By assumption, we have the following diagram
			$$\xymatrix{
				Q \ar@{^{(}->}[d]    \ar[r]^{\pi_Q}        &       P \ar@{^{(}->}[d]  \\
				Y\ar[d]_{f_Y} \ar[r]^{\pi_Y}  & X \ar[d] ^f \\
				S  \ar[r]_\pi   &  T   .
			}$$
			Denote the ramified index of $\pi_Y$ along the generic point of $ Q$ by $r_Q$ and $\mathrm{mult}_{Q}f_Y^*0_S$ by $m_Q$. 
			
			Next, we calculate the multiplicity of $f_Y^*0_T$ along $Q$, by the definition of the ramified index, we have
			$$\mathrm{mult}_Q\pi_Y^*f^*0_T=m_P\mathrm{mult}_Q\pi_Y^*P=m_Pr_Q.$$
			On the other hand, since $\pi_Y\circ f=f_Y\circ \pi$, we have
			$$\mathrm{mult}_Qf_Y^*\pi^*0_T=r_S\mathrm{mult}_Qf_Y^*0_S=r_Sm_Q.$$
			
			Choose a general point $x\in P$, the degree of $\pi_Q$ is equal to the number of points in $\pi_Q^{-1}(x)$. By comparing the preimages of $x$ in $Y$ (with multiplicity), we have
			$$\mathrm{deg}(\pi_Q)r_Q\leq r_S.$$
			Multiply both side by $m_Q$, we have
			$$\mathrm{deg}(\pi_Q)r_Qm_Q\leq r_Sm_Q=r_Qm_P.$$
			Then we have $\mathrm{deg}(\pi_Q)m_Q\leq m_P$. Since $\mathrm{deg}(\pi_Q)$ is both a factor of a positive integer $\leq r_S$ and a factor of a positive integer $\leq m_P$, then $\mathrm{deg}(\pi_Q)$ is a factor of $\mathrm{min}\{r_S!,m_P!\}$.
			
		\end{proof}
		
	\end{lemma}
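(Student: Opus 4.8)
Throughout, write $\pi_Y\colon Y\to X$ for the composite $Y\to X\times_TS\to X$ (so $\pi_Q=\pi_Y|_Q$) and note $f\circ\pi_Y=\pi\circ f_Y$. The plan is to extract from this square two local invariants along $Q$, link them by a single multiplicativity identity, and then finish with a degree count.

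First I would introduce $r_Q:=\mathrm{mult}_Q\pi_Y^*P$, the ramification index of $\pi_Y$ along the generic point of $Q$, and $m_Q:=\mathrm{mult}_Qf_Y^*0_S$. Both are positive integers: $Y$ is normal, so $\pi_Y^*P$ and $f_Y^*0_S$ are Cartier divisors; $Q$ appears in each of them, since $\pi_Y(Q)=P$ and $f_Y(Q)=0_S$; and $Q$ appears in no $\pi_Y^*P'$ with $P'\ne P$ another component of $f^*0_T$, nor in any $f_Y^*0_S'$ with $0_S'\ne 0_S$ over $0_T$, by the image and codimension constraints. The first step is then the identity $m_Pr_Q=r_Sm_Q$, obtained by computing $\mathrm{mult}_Q\pi_Y^*f^*0_T$ in two ways: through $X$, near the generic point of $Q$ one has $f^*0_T=m_PP+(\text{components avoiding }Q)$, giving $m_Pr_Q$; through $S$, using $f\circ\pi_Y=\pi\circ f_Y$ one has $\pi_Y^*f^*0_T=f_Y^*\pi^*0_T$ while $\pi^*0_T=r_S\cdot0_S+(\text{points avoiding }Q)$ near $0_S$, giving $r_Sm_Q$.

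The second step is the inequality $r_Q\deg(\pi_Q)\le r_S$, which is local at $\eta_P$. Passing to the complete DVR $\widehat{\mathcal{O}}_{Y,\eta_Q}$ over $\widehat R:=\widehat{\mathcal{O}}_{X,\eta_P}$: the function field $K(Y)$ is generated over $K(X)$ by the image of $K(S)$ (as $Y$ is a component of $X\times_TS$), and since $\eta_Q$ lies over $0_S$ that image sits inside $\mathrm{Frac}(\widehat{\mathcal{O}}_{S,0_S})$, so $\mathrm{Frac}(\widehat{\mathcal{O}}_{Y,\eta_Q})$ is the compositum of $\mathrm{Frac}(\widehat R)$ and $\mathrm{Frac}(\widehat{\mathcal{O}}_{S,0_S})$. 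Hence its degree over $\mathrm{Frac}(\widehat R)$ is at most $[\mathrm{Frac}(\widehat{\mathcal{O}}_{S,0_S}):\mathrm{Frac}(\widehat{\mathcal{O}}_{T,0_T})]=r_S$; on the other hand that degree is $e\cdot f=r_Q\cdot\deg(\pi_Q)$, since the ramification is $r_Q$ and the residue field of $\widehat{\mathcal{O}}_{Y,\eta_Q}$ is $K(Q)$ with $[K(Q):K(P)]=\deg(\pi_Q)$. Geometrically this is just the count, with multiplicity, of the preimages in $Y$ of a general point of $P$. Multiplying through by $m_Q$ and substituting $r_Sm_Q=r_Qm_P$ gives $\deg(\pi_Q)\,m_Q\le m_P$.

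Since $r_Q,m_Q\ge1$, the positive integer $\deg(\pi_Q)$ is at most each of $r_S$ and $m_P$, hence divides $r_S!$ and $m_P!$, hence divides $\gcd(r_S!,m_P!)=\min\{r_S!,m_P!\}$, which is the claim. The step I expect to need the most care is the inequality $r_Q\deg(\pi_Q)\le r_S$: because $Y$ is only the normalization of the \emph{main} component of $X\times_TS$, the base change from $T$ yields upstairs only a sub-extension of $\mathrm{Frac}(\widehat{\mathcal{O}}_{S,0_S})/\mathrm{Frac}(\widehat{\mathcal{O}}_{T,0_T})$ — which is exactly why one obtains an inequality rather than an equality — and making the compositum of complete discrete valuation fields (and the identification of the residue field with $K(Q)$) precise is where the real work lies; the remainder is bookkeeping of multiplicities along the two legs of the square.
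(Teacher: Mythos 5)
Your argument is correct and follows essentially the same route as the paper's proof: you establish the identity $m_Pr_Q=r_Sm_Q$ by computing $\mathrm{mult}_Q\pi_Y^*f^*0_T$ two ways, combine it with $\deg(\pi_Q)r_Q\le r_S$ to get $\deg(\pi_Q)m_Q\le m_P$, and then pass to factorials. The only difference is that where the paper justifies $\deg(\pi_Q)r_Q\le r_S$ by an informal count of preimages of a general point of $P$, you make this precise via the $e\cdot f$ formula in the completed local rings, which is a welcome tightening of the same step rather than a different approach.
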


	\section{Semistable reduction and Toroidal embedding}
	\subsection{Toric varieties}
	We fix the following notations:
	\begin{itemize}
		\item $N\cong \mathbb{Z}^n$: a lattice;
		\item $M=Hom(N,\mathbb{Z})$: the dual lattice of $N$;
		\item $T_N=Hom(M,\mathbb{C}^*)$: the complex torus associated to $N$;
		\item $\Sigma_N$: a fan in $N_{\Sigma}$;
		\item $\tau \prec \sigma$: the relation between cones $\tau$ and $\sigma$ that $\tau$ is in the face of $\sigma$;
		\item $X_\Sigma$: the toric variety associated to $\Sigma$;
		\item $U_\sigma$: the local affine chart of $X_\Sigma$ associated to $\sigma$ in $\Sigma$;
		\item $x_\sigma \in U_\sigma$: the distinguished point associated to $\sigma$;
		\item $O_\sigma$: the $T_N$-orbit of $X_\sigma$ under the $T_N$-action on $X_\Sigma$;
		\item $N_\sigma$: the sublattice of $N$ generated as a subgroup by $\sigma\cap N$;
		\item $Span_{\mathbb{R}}(\sigma)$: the real vector subspace spanned by $\sigma$;
		\item interior of $\sigma$: the topological interior of $\sigma$ in $Span_{\mathbb{R}}(\sigma)$.
	\end{itemize}
	
	Let $\Sigma',\Sigma$ be fans in $N'$, $N$ respectively. A map between fans, in notation $\psi:\Sigma'\rightarrow \Sigma$, is a homomorphism $\psi:N'\rightarrow N$ of lattices that satisfies the condition: For each $\sigma'\in \Sigma'$, there exists a $\sigma\in \Sigma$ such that $\psi(\sigma')\subset \sigma$. Such $\psi$ determines a morphism $\tilde{\psi}:X_{\Sigma'}\rightarrow X_\Sigma$. A morphism between toric varieties that arises in this way is called a toric morphism.
	
	Let $\Sigma'_\sigma$ be the set of cones in $\Sigma'$ whose interior is mapped to the interior of $\sigma\in \Sigma$. Let $\sigma'\in \Sigma'_\sigma$. The image $\psi(N'/ N'_{\sigma'})$ in $N/N_\sigma$ is independent of the choice of $\sigma'$ in $\Sigma'_\sigma$ . We define the index $[N/N_\sigma:\psi(N'/N'_\sigma)]$ to be the index of $\tilde{\psi}$ over $O_\sigma$, and denote it by $Ind(\sigma)$.
	
	Let $\tau'\in \Sigma'_\sigma$ and $\{\sigma'_1,\sigma'_2,...\}$ be the set of cones in $\Sigma'_\sigma$ that contains $\tau'$ as a face. Then each $\sigma'_i$ determines a cone $\bar{\sigma'_i}$ in $\psi^{-1}((N_\sigma)_\mathbb{R})/(N'_{\tau'})_{\mathbb{R}}$, defined by 
	$$\bar{\sigma'_i}=(\sigma'_i+(N'_{\tau'})_{\mathbb{R}})/(N'_{\tau'})_{\mathbb{R}}.$$
	Note that $\sigma'_i+(N'_{\tau'})_{\mathbb{R}}$ is contained in $\psi^{-1}(N_{\sigma})_{\mathbb{R}}$ since $\tau',\sigma'_i \in \Sigma'_\sigma$. Thus, $\{\bar{\sigma'_1},\bar{\sigma'_2},...\}$ defines a fan in $\psi^{-1}((N_\sigma)_\mathbb{R})/(N'_{\tau'})_\mathbb{R}$.
	The fan in $\psi^{-1}((N_\sigma)_\mathbb{R})/(N'_{\tau'})_\mathbb{R}$ constructed above will be called the relative star of $\tau'$ over $\sigma$ and will be denoted by $Star_\sigma(\tau')$
	
	A cone $\tau'\in\Sigma'_\sigma$ is called primitive with respect to $\psi$ if none of the faces of $\tau'$ are in $\Sigma'_\sigma$. 
	
	Let $X_\Sigma$ be a toric variety, we call the divisor $D_{\Sigma}:=X_\Sigma\setminus T$ the toric boundary of $X_\Sigma$.
	\begin{thm}{\cite[Proposition 2.1.4]{HLY02}}\label{fibration of toric morphism}
		Let $\tilde{\psi}:X_{\Sigma'}\rightarrow X_{\Sigma}$ be a toric morphism induced by a map of fans $\psi:\Sigma'\rightarrow \Sigma$. Then:
		\begin{itemize}
			\item The image $\tilde{\psi}(X_{\Sigma'})$ of $\tilde{\psi}$ is a subvariety of $X_\Sigma$. It is realized as the toric variety corresponding to the fan $\Sigma_\psi:=\Sigma \cap \psi(N'_{\mathbb{R}})$.
			
			\item The fiber of $\tilde{\psi}$ over a point $y\in X_{\Sigma_{\psi}}$ depends only on the orbit $O_\sigma,\sigma\in \Sigma_\psi$, that contains $y$. Denote this fiber by $F_\sigma$, then it can be described as follows.
			
			Define $\Sigma'_\sigma$ to be the set of cones $\sigma'$ in $\Sigma'$, whose interior is mapped to the interior of $\sigma$. Let $Ind(\sigma)$ be the index of $\tilde{\psi}$ over $O_\sigma$. Then $\psi^{-1}(y)=F_\sigma$ is a disjoint union of $Ind(\sigma)$ identical copies of connected reducible toric variety $F^c_\sigma$, whose irreducible components $F^{\tau'}_\sigma$ are the toric variety associated to the relative star $Star_\sigma(\tau')$ of the primitive elements $\tau'$ in $\Sigma'_\sigma$.
			
			\item For $\sigma\in \Sigma_\psi, \tilde{\psi}^{-1}(O_\sigma)=\tilde{O}_\sigma\times F^c_\sigma$, where $\tilde{O}_\sigma$ is a connected covering space of $O_\sigma$ of order $Ind(\sigma)$.
		\end{itemize}
		\begin{rem}
			Here the term reducible toric variety means a reducible variety obtained by gluing a collection of toric varieties along some isomorphic toric orbits.
		\end{rem}
	\end{thm}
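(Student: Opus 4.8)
The plan is to reduce the whole statement to explicit computations with cones and distinguished points, exploiting the $T_{N'}$-equivariance of $\tilde\psi$ and the orbit--cone correspondence on both sides.

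\emph{The image.} Since $X_{\Sigma'}$ is irreducible with dense torus $T_{N'}$, the closure of $\tilde\psi(X_{\Sigma'})$ is the closure of $\tilde\psi(T_{N'})$. The torus homomorphism $\psi\otimes\mathbb{C}^*\colon T_{N'}\to T_N$ has image the subtorus $T_{N_\psi}$ of the saturated sublattice $N_\psi=N\cap\psi(N'_{\mathbb{R}})$: divisibility of $\mathbb{C}^*$ gives a surjection $\psi(N')\otimes\mathbb{C}^*\twoheadrightarrow N_\psi\otimes\mathbb{C}^*$, and $N/N_\psi$ is free, so $T_{N_\psi}\hookrightarrow T_N$. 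By the standard computation of the closure of a subtorus, this closure is $X_{\Sigma_\psi}$, where $\Sigma_\psi$ is the fan in $(N_\psi)_{\mathbb{R}}$ whose cones are the slices $\sigma\cap(N_\psi)_{\mathbb{R}}$, $\sigma\in\Sigma$. Because $\psi(N')\subseteq N_\psi$ and $\psi(\sigma')\subseteq\psi(N'_{\mathbb{R}})$ for every $\sigma'\in\Sigma'$, the map $\psi$ is a map of fans $\Sigma'\to\Sigma_\psi$; checking chart by chart, $\tilde\psi$ factors as $X_{\Sigma'}\to X_{\Sigma_\psi}\hookrightarrow X_\Sigma$ with the first arrow dominant (it is the toric morphism of $\psi\colon N'\to N_\psi$). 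Replacing $(N,\Sigma)$ by $(N_\psi,\Sigma_\psi)$, I may assume $\psi_{\mathbb{R}}$ is surjective, so $\tilde\psi$ is dominant and $\Sigma_\psi=\Sigma$.

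\emph{Fibers over orbits.} The morphism $\tilde\psi$ is equivariant for $T_{N'}$ acting on $X_{\Sigma'}$ as usual and on $X_\Sigma$ through $\psi\otimes\mathbb{C}^*$; hence $\tilde\psi^{-1}(O_\sigma)$ is $T_{N'}$-stable, and for $y,y'\in O_\sigma$ any lift $t'\in T_{N'}$ of a $t\in T_N$ with $t\cdot y=y'$ carries $\tilde\psi^{-1}(y)$ isomorphically onto $\tilde\psi^{-1}(y')$. This proves the fiber over $O_\sigma$ depends only on $\sigma$ and reduces the problem to describing $\tilde\psi^{-1}(x_\sigma)$ and the map $\tilde\psi^{-1}(O_\sigma)\to O_\sigma$. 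By the orbit--cone correspondence, $\tilde\psi$ sends the orbit $O_{\sigma'}$ of $X_{\Sigma'}$ into the orbit $O_\rho$ of $X_\Sigma$, where $\rho$ is the unique cone of $\Sigma$ with $\psi(\mathrm{relint}\,\sigma')\subseteq\mathrm{relint}\,\rho$, via the torus homomorphism $T_{N'/N'_{\sigma'}}\to T_{N/N_\rho}$ induced by $\psi$. Consequently $\tilde\psi^{-1}(O_\sigma)=\bigsqcup_{\sigma'\in\Sigma'_\sigma}O_{\sigma'}$ (with its reduced structure), and for each $\sigma'\in\Sigma'_\sigma$ the piece $\tilde\psi^{-1}(x_\sigma)\cap O_{\sigma'}$ is a torsor under $\ker(T_{N'/N'_{\sigma'}}\to T_{N/N_\sigma})$, hence has identity component the torus of $\ker(N'/N'_{\sigma'}\to N/N_\sigma)$ and exactly $[N/N_\sigma:\psi(N'/N'_{\sigma'})]=Ind(\sigma)$ connected components (the index being independent of $\sigma'\in\Sigma'_\sigma$, as recorded before the theorem).

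\emph{The product structure, and the main difficulty.} To turn this stratification into the asserted product, I would restrict to $\tilde\psi^{-1}(U_\sigma)\to U_\sigma$ and fix a splitting $N'_{\mathbb{R}}=\ker\psi_{\mathbb{R}}\oplus L_{\mathbb{R}}$ with $\psi|_{L_{\mathbb{R}}}$ an isomorphism onto $N_{\mathbb{R}}$. Lifting $\sigma$ into $L_{\mathbb{R}}$ and unwinding the definition of $\Sigma'_\sigma$, one splits off: (i) a torus mapping to $O_\sigma$ as a connected $Ind(\sigma)$-sheeted cover, namely $\tilde O_\sigma$ --- its disconnection into $Ind(\sigma)$ copies after restriction to a single point of $O_\sigma$ being exactly the failure of $\psi(N'/N'_{\sigma'})$ to fill $N/N_\sigma$; and (ii) a family over $O_\sigma$ whose fiber over $x_\sigma$ is the reducible toric variety $F^c_\sigma$. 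For $F^c_\sigma$ itself: the minimal cones of $\Sigma'_\sigma$ are exactly the primitive ones $\tau'$; a local computation of relative stars identifies the closure of the orbit $O_{\tau'}$ inside the fiber with the toric variety of $Star_\sigma(\tau')$ in $\psi^{-1}((N_\sigma)_{\mathbb{R}})/(N'_{\tau'})_{\mathbb{R}}$ --- these are the components $F^{\tau'}_\sigma$ --- and the non-minimal cones of $\Sigma'_\sigma$ prescribe the gluing of the $F^{\tau'}_\sigma$ along orbits into the connected $F^c_\sigma$. Combining (i) and (ii) yields $\tilde\psi^{-1}(O_\sigma)\cong\tilde O_\sigma\times F^c_\sigma$ and $\tilde\psi^{-1}(x_\sigma)\cong Ind(\sigma)$ copies of $F^c_\sigma$. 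I expect this last step to be the main obstacle: one must verify that the combinatorial stratification $\bigsqcup_{\sigma'\in\Sigma'_\sigma}O_{\sigma'}$ genuinely globalizes to a product --- with the covering torus and the fibral toric variety literally split off, and the components $F^{\tau'}_\sigma$ glued as claimed --- rather than merely stratifying the preimage; this is where the compatibility of the splitting $N'_{\mathbb{R}}=\ker\psi_{\mathbb{R}}\oplus L_{\mathbb{R}}$ with $\Sigma'_\sigma$ and its relative stars must be handled with care. (One also keeps in mind that over a top-dimensional $\sigma$ the scheme-theoretic fiber of $\tilde\psi$ can be non-reduced, so these formulas are read at the level of reduced fibers.)
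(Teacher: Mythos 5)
The paper does not actually prove this statement: it is quoted verbatim from \cite[Proposition 2.1.4]{HLY02} and used as a black box, so there is no internal argument to compare yours against; your proposal has to stand on its own as a proof of the cited result. Judged that way, it is an outline rather than a proof, and the gap sits exactly at the content of the theorem. Your first two steps (factoring $\tilde\psi$ through the slice fan, $T_{N'}$-equivariance reducing everything to the fiber over the distinguished point $x_\sigma$, and the orbit-by-orbit description $\tilde\psi^{-1}(O_\sigma)=\bigsqcup_{\sigma'\in\Sigma'_\sigma}O_{\sigma'}$ with each piece of $\tilde\psi^{-1}(x_\sigma)\cap O_{\sigma'}$ a torsor under the kernel torus, with component count $Ind(\sigma)$) are standard and essentially correct. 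But the assertions that make the theorem useful --- that these strata glue into exactly $Ind(\sigma)$ connected copies of one reducible toric variety $F^c_\sigma$, that the irreducible components of $F^c_\sigma$ are precisely the toric varieties of the relative stars $Star_\sigma(\tau')$ of the \emph{primitive} cones, and that the preimage of the whole orbit splits as a product $\tilde O_\sigma\times F^c_\sigma$ --- are deferred to a splitting $N'_{\mathbb{R}}=\ker\psi_{\mathbb{R}}\oplus L_{\mathbb{R}}$ whose compatibility with $\Sigma'_\sigma$ you yourself flag as ``the main obstacle'' and do not carry out. In particular you never verify connectedness of $F^c_\sigma$, never match the component count of the union of orbit-strata with $Ind(\sigma)$, and never show the cover $\tilde O_\sigma$ splits off globally over $O_\sigma$ rather than just fiberwise. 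So the proposal reduces the theorem to its own hardest clause rather than proving it.

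One further point deserves care even in the part you do argue: the claim that ``the closure of the subtorus $T_{N_\psi}$ is $X_{\Sigma_\psi}$'' is not a standard computation as stated, because the closure of a subtorus orbit in a toric variety need not be normal; the toric morphism $\mathbb{A}^1\rightarrow\mathbb{A}^2$, $t\mapsto(t^2,t^3)$, induced by $1\mapsto(2,3)$, has image the cuspidal cubic, whereas the slice fan gives $\mathbb{A}^1$. The image is therefore only a (possibly non-normal) toric variety whose normalization is $X_{\Sigma_\psi}$ --- a caveat admittedly inherited from the statement of \cite[Proposition 2.1.4]{HLY02} itself, and harmless for the way the present paper uses the result (there $\psi$ is a refinement, hence surjective), but your reduction step ``replace $(N,\Sigma)$ by $(N_\psi,\Sigma_\psi)$'' should be phrased at the level of set-theoretic fibers, or with the normalization inserted, to be correct. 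Similarly, the equivariance argument should translate points of $O_\sigma$ by elements of the image subtorus $T_{N_\psi}$ (which do lift to $T_{N'}$), not by arbitrary $t\in T_N$.
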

	
	\begin{thm}{\cite[Remark 2.1.12]{HLY02}}\label{Index of toric morphism equal to 1}
		If $\psi$ is surjective, then $Ind(\sigma)=1$ for all $\sigma\in \Sigma$.
	\end{thm}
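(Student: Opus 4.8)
The plan is to read ``$\psi$ surjective'' as the assertion that the underlying lattice homomorphism $\psi\colon N'\to N$ is surjective. This is the correct hypothesis: surjectivity of the \emph{morphism} $\tilde\psi$ alone does not suffice, as the example $N'=N=\mathbb{Z}$, $\psi(n)=2n$, with $\Sigma'=\Sigma$ the complete fan of $\mathbb{P}^1$ shows --- there $\tilde\psi\colon\mathbb{P}^1\to\mathbb{P}^1$, $[x:y]\mapsto[x^2:y^2]$, is surjective while $Ind(\{0\})=[\mathbb{Z}:2\mathbb{Z}]=2$. So fix $\sigma\in\Sigma$. Since $\psi$ is surjective we have $\psi(N'_{\mathbb{R}})=N_{\mathbb{R}}$, hence $\Sigma_\psi=\Sigma$ and, by Theorem~\ref{fibration of toric morphism}, $\tilde\psi$ is surjective; by disjointness and irreducibility of the torus orbits this forces some orbit of $X_{\Sigma'}$ to dominate $O_\sigma$, i.e.\ $\Sigma'_\sigma\neq\emptyset$. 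Fix any $\sigma'\in\Sigma'_\sigma$.

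The first step is to exhibit the induced map. Because $\sigma'\in\Sigma'_\sigma$, the interior of $\sigma'$ is sent into the interior of $\sigma$; passing to closures ($\psi$ is linear, hence continuous, and the interior of a cone is dense in it) gives $\psi(\sigma')\subset\sigma$, hence $\psi(\sigma'\cap N')\subset\sigma\cap N$. Since $N'_{\sigma'}$ and $N_\sigma$ are by definition the subgroups generated by $\sigma'\cap N'$ and $\sigma\cap N$ respectively, we obtain $\psi(N'_{\sigma'})\subset N_\sigma$, so $\psi$ descends to a homomorphism $\bar\psi\colon N'/N'_{\sigma'}\to N/N_\sigma$ making the square with vertices $N',N,N'/N'_{\sigma'},N/N_\sigma$ (horizontal maps $\psi$ and $\bar\psi$, vertical maps the quotient projections) commute. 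Now the composite $N'\to N\to N/N_\sigma$ is surjective, being a composition of two surjections, and it factors as $N'\to N'/N'_{\sigma'}\to N/N_\sigma$ with second arrow $\bar\psi$ and first arrow surjective; therefore $\bar\psi$ is surjective, i.e.\ $\psi(N'/N'_{\sigma'})=N/N_\sigma$. Hence $Ind(\sigma)=[N/N_\sigma:\psi(N'/N'_{\sigma'})]=1$, and since $\sigma$ was arbitrary this holds for all $\sigma\in\Sigma$.

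I do not anticipate any genuine obstacle: once ``surjective'' is read correctly the argument is elementary lattice algebra, and the only points requiring care are the verification $\psi(N'_{\sigma'})\subset N_\sigma$ (so that $\bar\psi$ is defined) and the independence of the choice of $\sigma'$, the latter being automatic here because $\bar\psi$ turns out to be onto for \emph{every} admissible $\sigma'$. For the record, the geometric content of $Ind(\sigma)=1$ is, by the third bullet of Theorem~\ref{fibration of toric morphism}, that the covering $\tilde O_\sigma\to O_\sigma$ is trivial, equivalently that $\tilde\psi^{-1}(O_\sigma)$ is connected and is simply the family $F^c_\sigma$ over $O_\sigma$.
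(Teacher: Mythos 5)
Your proof is correct, but there is nothing in the paper to compare it against: the paper states this as a quoted result from \cite[Remark 2.1.12]{HLY02} and gives no argument of its own. Your reading of the hypothesis is the right one for the paper's conventions (here $\psi$ always denotes the lattice homomorphism $N'\to N$, $\tilde\psi$ the induced morphism), and your $\mathbb{P}^1\to\mathbb{P}^1$, $[x:y]\mapsto[x^2:y^2]$ example correctly shows that surjectivity of $\tilde\psi$ alone would not suffice. The core computation --- $\psi(\sigma')\subset\sigma$ gives $\psi(N'_{\sigma'})\subset N_\sigma$, so the surjection $N'\to N\to N/N_\sigma$ factors through $\bar\psi\colon N'/N'_{\sigma'}\to N/N_\sigma$, forcing $\bar\psi$ onto and $Ind(\sigma)=[N/N_\sigma:\psi(N'/N'_{\sigma'})]=1$ --- is exactly the elementary lattice argument underlying the cited remark, and it is what the paper actually needs (in its one application, Theorem \ref{exceptional divisor of toric variety is projective bundle}, $\psi$ is the identity on $N$, so surjectivity is automatic). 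One small caution: your step establishing $\Sigma'_\sigma\neq\emptyset$ leans on the first bullet of Theorem \ref{fibration of toric morphism} (image $=X_{\Sigma_\psi}$), which as stated carries implicit completeness/properness assumptions from \cite{HLY02} (for a non-proper $\tilde\psi$, e.g.\ the torus included into $\mathbb{P}^2$ via the identity on $N$, some orbits have empty preimage). This does not affect your main point, since $Ind(\sigma)$ is only defined when $\Sigma'_\sigma\neq\emptyset$, and wherever it is defined your computation gives $Ind(\sigma)=1$.
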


	For any toric variety $X_\Sigma$, it is well known that there is a refinement $\psi:\Sigma'\rightarrow \Sigma$, i.e. each cone of $\Sigma$ is a union of cones in $\Sigma'$. So that $\tilde{\psi}:X_{\Sigma'}\rightarrow X_\Sigma$ is a resolution of singularities.
	\begin{thm}\label{exceptional divisor of toric variety is projective bundle}
		Let $\tilde{\psi}:X_{\Sigma'}\rightarrow X_\Sigma$ be the resolution defined as above. Suppose $V$ is a divisor of $X_{\Sigma'}\setminus T_{N}$, then $V$ is birationally equivalent to $\mathbb{P}^r\times \tilde{\psi}(V)$, where $r=\mathrm{dim}V-\mathrm{dim}\tilde{\psi}(V)$.
		\begin{proof}
			Because $\tilde{\psi}$ is a toric morphism, every divisor of $X_{\Sigma'}\setminus T_{N}$ corresponds to a 1-dimension cone of $\Sigma'$. Fix a cone $\sigma\in \Sigma$ of dimension $\geq 2$. Suppose $\sigma'_1,\sigma'_2,...\in \Sigma'_\sigma$ are the 1-dimensional cones that map to the interior of $\sigma$, which are clearly primitive. Let $\sigma^!_1,\sigma^!_2,...\in \Sigma'_\sigma$ be other primitive cones. By Theorem \ref{Index of toric morphism equal to 1} and Theorem \ref{fibration of toric morphism}, $\tilde{\psi}^{-1}(O_\sigma)=O_\sigma\times F^c_\sigma$, and the irreducible components of $F^c_\sigma$ correspond to the cones $\{\sigma'_1,\sigma'_2,...\}\cup \{\sigma^!_1,\sigma^!_2,...\}$. 
			
			By comparing the dimension of exceptional locus, it is easy to see that the codimension 1 components of $\tilde{\psi}^{-1}(O_\sigma)$ equal to $O_\sigma\times F'^c_\sigma$, where the irreducible components of $F'^c_\sigma$ are the toric variety associated to the relative stars $\{Star_\sigma(\sigma'_1),Star_\sigma(\sigma'_2),...\}$. Suppose $V$ is the divisor defined by $\sigma'_1$, then $V\subset \tilde{\psi}^{-1}(O_\sigma)$ is a codimension 1 component and birational equivalent to $O_\sigma\times F^{c1}_\sigma$, where $F^{c1}_\sigma$ is the toric variety associated to the relative star $Star_\sigma(\sigma'_1)$. Because every toric variety is birational equivalent to $\mathbb{P}^r$ for some $r\in\mathbb{N}$, the result follows.
			
		\end{proof}
	\end{thm}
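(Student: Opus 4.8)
The plan is to trace the divisor $V$ through the orbit decomposition of the toric morphism $\tilde\psi$ and to identify it, up to birational equivalence, with a product of a torus orbit closure and the toric variety attached to a suitable relative star. First I would observe that every prime divisor $V\subset X_{\Sigma'}\setminus T_N$ is of the form $D_\rho$ for a ray $\rho\in\Sigma'$, and I would let $\sigma\in\Sigma$ be the unique cone whose relative interior contains the image $\psi(\mathrm{int}\,\rho)$ — this exists because $\psi$ is a map of fans, and then $\rho\in\Sigma'_\sigma$. Since $O_\rho\to O_\sigma$ is a dominant (in fact surjective) morphism of tori, $\tilde\psi(V)=\overline{O_\sigma}$ has dimension $n-\dim\sigma$, so $r=\dim V-\dim\tilde\psi(V)=\dim\sigma-1$; moreover the open set $V\cap\tilde\psi^{-1}(O_\sigma)$ is dense in $V$, so it is enough to analyse $\tilde\psi^{-1}(O_\sigma)$ and the codimension-one piece of it through which $V$ passes.

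Next I would feed this cone $\sigma$ into the structure theorem for fibres of toric morphisms. Because a refinement $\psi$ is the identity on the lattice, in particular surjective, Theorem \ref{Index of toric morphism equal to 1} gives $Ind(\sigma)=1$, and Theorem \ref{fibration of toric morphism} then provides an isomorphism $\tilde\psi^{-1}(O_\sigma)\cong O_\sigma\times F^c_\sigma$, where the irreducible components of $F^c_\sigma$ are the toric varieties $X_{Star_\sigma(\tau')}$ attached to the primitive cones $\tau'\in\Sigma'_\sigma$. The one computation I would actually carry out is a dimension count: $Star_\sigma(\tau')$ is a fan in $\psi^{-1}((N_\sigma)_\mathbb{R})/(N'_{\tau'})_\mathbb{R}$, which has dimension $\dim\sigma-\dim\tau'$, so the component $O_\sigma\times X_{Star_\sigma(\tau')}$ of $\tilde\psi^{-1}(O_\sigma)$ has dimension $n-\dim\tau'$. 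Hence the codimension-one components are precisely those coming from the rays $\rho'\in\Sigma'_\sigma$ (which are automatically primitive), and since $V\cap\tilde\psi^{-1}(O_\sigma)$ is dense in the codimension-one variety $V$ it must equal $O_\sigma\times X_{Star_\sigma(\rho)}$.

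To conclude, I would read off the birational description: $V$ is birational to $O_\sigma\times X_{Star_\sigma(\rho)}$, and since $O_\sigma$ is a dense open subset of $\tilde\psi(V)=\overline{O_\sigma}$ this is birational to $\tilde\psi(V)\times X_{Star_\sigma(\rho)}$. The second factor is a toric variety of dimension $\dim\sigma-1=r$, and every toric variety is rational, so $X_{Star_\sigma(\rho)}$ is birational to $\mathbb{P}^r$, giving $V\sim_{\mathrm{bir}}\mathbb{P}^r\times\tilde\psi(V)$. The case where $\sigma$ is a ray (so $V$ is a non-exceptional divisor) is immediate, as there $r=0$ and $V\to\tilde\psi(V)$ is already birational.

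The main obstacle is not conceptual but organisational: one must choose $\sigma$ so that $V$ genuinely appears as a codimension-one stratum over $O_\sigma$ rather than as a component of higher codimension, and one must be careful with the dimension identities for the relative stars so as to single out exactly the ray-components of $F^c_\sigma$. One should also double-check the surjectivity hypothesis needed to apply Theorem \ref{Index of toric morphism equal to 1}, since it is precisely what guarantees that $\tilde\psi^{-1}(O_\sigma)$ splits as an honest product $O_\sigma\times F^c_\sigma$ rather than as a nontrivial connected cover.
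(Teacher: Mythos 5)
Your proposal is correct and follows essentially the same route as the paper's own proof: you invoke Theorem \ref{Index of toric morphism equal to 1} and Theorem \ref{fibration of toric morphism} to split $\tilde{\psi}^{-1}(O_\sigma)$ as $O_\sigma\times F^c_\sigma$, use a dimension count to see that the codimension-one components come exactly from the rays in $\Sigma'_\sigma$, and conclude by rationality of the toric variety $X_{Star_\sigma(\rho)}$. Your write-up is in fact slightly more careful than the paper's (explicit density of $V\cap\tilde{\psi}^{-1}(O_\sigma)$, the computation $r=\dim\sigma-1$, and the trivial case where $\sigma$ is a ray), but it is the same argument.
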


	\subsection{Toroidal embedding}
	Given a normal variety $X$ and an open subset $U_X\subset X$, the embedding $U_X\subset X$ is called toroidal if for every closed point $x\in X$, there exist a toric variety $X_\sigma$, a point $s\in X_\sigma$, and an isomorphism of complete local $k$-algebras
	$$\hat{\Co}_{X,x}\cong \hat{\Co}_{X_\sigma,s},$$
	such that the ideal of $X\setminus U_X$ maps isomorphically to the ideal of $X_\sigma\setminus T_\sigma$.
	In this paper we will assume that every irreducible component of $X\setminus U_X$ is normal, that is $U_X\subset X$ a strict toroidal embedding.
	
	\begin{prop}[{\cite[Page 195]{KKMS73}}]\label{etale local model}
		Let $U\subset X$ be a toroidal embedding of varieties and $x$ a closed point of $X$. Then there exists an affine toric variety $X_\sigma$ and an \'etale morphism $\psi$ from an open neighborhood of $x\in X$ to $X_\sigma$, such that locally at $x$ (for the Zariski topology) we have $U=\psi^{-1}(T)$, where $T$ is the big torus of $X_\sigma$.
	\end{prop}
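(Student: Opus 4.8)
The plan is to algebraize, by means of Artin's approximation theorem, the formal isomorphism built into the definition of a toroidal embedding. By that definition there are a toric variety, a point $s$ on it, and an isomorphism $\hat{\Co}_{X,x}\cong\hat{\Co}_{X_\sigma,s}$ of complete local $\mathbb{C}$-algebras identifying the ideal of $X\setminus U$ with the ideal of the toric boundary; replacing the toric variety by a torus-invariant affine open containing $s$ we may assume it is $X_\sigma=\operatorname{Spec}R_\sigma$ with $R_\sigma=\mathbb{C}[\sigma^\vee\cap M]$, and we write $\phi\colon\hat{\Co}_{X_\sigma,s}\xrightarrow{\ \sim\ }\hat{\Co}_{X,x}$ for (the inverse of) this isomorphism, so that $\phi$ carries the completed boundary ideal onto the completed ideal of $X\setminus U$. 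First I would fix $\mathbb{C}$-algebra generators $t_1,\dots,t_n$ of $R_\sigma$ with relation ideal $J\subset\mathbb{C}[y_1,\dots,y_n]$, generators $h_1,\dots,h_m$ of the boundary ideal $I_{D_\sigma}\subset R_\sigma$ (where $D_\sigma:=X_\sigma\setminus T$), and regular functions $b_1,\dots,b_p$ on a Zariski neighbourhood of $x$ generating the ideal of $X\setminus U$ there.

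Next I would consider the system of polynomial equations over $\Co_{X,x}$ in unknowns $Y_i$ $(1\le i\le n)$ and $A_{j\ell},C_{\ell j}$ $(1\le j\le m,\ 1\le\ell\le p)$ consisting of: (i) every generator of $J$ evaluated at $(Y_1,\dots,Y_n)$ is zero; (ii) $h_j(Y_1,\dots,Y_n)=\sum_\ell A_{j\ell}b_\ell$ for each $j$; (iii) $b_\ell=\sum_j C_{\ell j}\,h_j(Y_1,\dots,Y_n)$ for each $\ell$. Taking $Y_i=\phi(t_i)$ gives a solution over $\hat{\Co}_{X,x}$: equations (i) hold because $\phi$ is a ring homomorphism killing $J(t_1,\dots,t_n)$, while (ii) and (iii) hold exactly because $\phi$ matches the two completed boundary ideals, so each $\phi(h_j)$ lies in $(b_1,\dots,b_p)\hat{\Co}_{X,x}$ and each $b_\ell$ lies in $(\phi(h_1),\dots,\phi(h_m))\hat{\Co}_{X,x}$. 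By Artin approximation (valid because $\Co_{X,x}$ is the local ring of a $\mathbb{C}$-variety, whose henselization has the approximation property), for every $N$ there is a solution $(y_i,a_{j\ell},c_{\ell j})$ over $\Co_{X',x'}$ for some étale neighbourhood $(X',x')\to(X,x)$, with $y_i\equiv\phi(t_i)\pmod{\mathfrak{m}_x^N}$. After shrinking $X'$ the tuple $(y_i)$ defines a morphism $\psi\colon X'\to X_\sigma$ by $t_i\mapsto y_i$, with $\psi(x')=s$.

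It then remains to verify that $\psi$ is étale near $x'$ and that $\psi^{-1}(T)=U$ near $x'$. Choosing $N\ge2$, the induced map $\hat{\psi}_{x'}^{*}\colon\hat{\Co}_{X_\sigma,s}\to\hat{\Co}_{X',x'}$ agrees with $\phi$ modulo $\mathfrak{m}_x^N$ on the generators $t_i$, so $\phi^{-1}\circ\hat{\psi}_{x'}^{*}$ is a local endomorphism of $\hat{\Co}_{X_\sigma,s}$ inducing the identity on $\mathfrak{m}/\mathfrak{m}^2$; such an endomorphism is surjective by completeness and Nakayama, hence an isomorphism of a Noetherian ring, so $\hat{\psi}_{x'}^{*}$ is an isomorphism. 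Therefore $\psi$ induces an isomorphism on completions at $x'$, which forces $\psi$ to be flat and unramified — that is, étale — at $x'$, hence étale on a Zariski neighbourhood of $x'$, which we again call $X'$. Finally, since equations (ii) and (iii) hold exactly over $\Co_{X',x'}$, we get $\psi^{*}I_{D_\sigma}\cdot\Co_{X',x'}=I_{X\setminus U}\cdot\Co_{X',x'}$; after one last shrinking of $X'$ this means $\psi^{-1}(D_\sigma)=X'\setminus U$ as closed subschemes, equivalently $\psi^{-1}(T)=X'\cap U$, which is the assertion.

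The only genuinely nontrivial input is Artin approximation — equivalently, the excellence of $\Co_{X,x}$ — which is what turns the formal isomorphism into an honest étale morphism; everything else is bookkeeping. The two points needing care are: (a) making sure the approximated map $\psi$, not merely the formal $\phi$, respects the boundary, which is why the inclusions of boundary ideals are encoded as the auxiliary equations (ii)--(iii) so that they persist through the approximation; and (b) confirming that a sufficiently accurate approximation still induces an isomorphism on completions. I would also note that Artin approximation naturally produces an étale neighbourhood of $x$ rather than a Zariski-open subset of $X$, which is precisely the form used in the sequel, so ``open neighbourhood'' in the statement is to be read in that sense.
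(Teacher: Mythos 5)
Your Artin--approximation scheme is, in itself, sound: encoding the two inclusions of boundary ideals as the auxiliary equations (ii)--(iii), approximating the formal solution modulo $\mathfrak{m}^2$, and observing that a local endomorphism of the complete local ring inducing the identity on $\mathfrak{m}/\mathfrak{m}^2$ is an automorphism, are all correct steps, and they do produce an \'etale morphism $\psi\colon X'\to X_\sigma$ with $\psi^{-1}(T)$ equal to the preimage of $U$, \emph{defined on an \'etale neighbourhood} $(X',x')\to(X,x)$. (The paper offers no proof of its own --- it cites \cite{KKMS73} --- so there is nothing to compare line by line; yours is the standard way of passing from the formal definition to an \'etale-local statement.)

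The gap is precisely the point you wave away in your last sentence. The proposition asserts that $\psi$ is defined on a Zariski open neighbourhood of $x$ \emph{in $X$} (``locally at $x$ (for the Zariski topology)''), and this stronger form is what the paper actually uses: in Lemma \ref{log canonical center is projective bundle} the variety $X$, shrunk Zariski-locally around $x$, is base-changed along $\pi$ to form $X_1=X_{\sigma'}\times_{X_\sigma}X$, and the birational conclusions about the place $P$ rely on the chart living on $X$ itself rather than on an \'etale cover of it. Artin approximation only ever yields solutions in the henselization $\mathcal{O}^h_{X,x}$, never in $\mathcal{O}_{X,x}$, so your argument proves a weaker statement, not the one asserted. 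Moreover, the Zariski form genuinely requires the standing hypothesis, imposed in the paper immediately before the proposition, that every irreducible component of $X\setminus U$ is normal (strictness) --- a hypothesis your proof never invokes, which is a sign the method cannot reach the stated result: without strictness the Zariski form is simply false. For instance, if $X$ is a smooth surface and $X\setminus U$ is an irreducible curve with a node at $x$, the pair is formally isomorphic to $(\mathbb{A}^2,\{uv=0\})$, but no regular function on any Zariski neighbourhood of $x$ vanishes on a single branch, so no \'etale $\psi$ defined near $x$ in $X$ can pull the toric boundary back to $X\setminus U$. In the strict case the chart must be built from honest regular functions: the branches of the boundary at $x$ are germs of global components, and one produces a monoid homomorphism $\sigma^\vee\cap M\to(\mathcal{O}_{X,x},\cdot)$ by showing the relevant boundary divisors are already principal in $\mathcal{O}_{X,x}$ (e.g.\ via injectivity of $\mathrm{Cl}(\mathcal{O}_{X,x})\to\mathrm{Cl}(\hat{\mathcal{O}}_{X,x})$) and adjusting units; that is where the real content of the Zariski-local statement lies, and it is not supplied by the approximation argument.
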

	
	A dominant morphism $f:(U_X\subset X)\rightarrow (U_B\subset B)$ of toroidal embedding is called toroidal if for every closed point $x\in X$ there exist local models $(X_\sigma,s)$ at $x$, $(X_\tau,t)$ at $f(x)$ and a toric morphism $g:X_\sigma \rightarrow X_\tau$ so that the following diagram commutes
	$$\xymatrix{
		\hat{\Co}_{X,x} \ar[r]^{\cong} & \hat{\Co}_{X_\sigma,s} \\
		\hat{\Co}_{B,f(x)} \ar[r]^{\cong} \ar[u]^{\hat{f}^\# }& \hat{\Co}_{X_\tau,t} \ar[u]^{\hat{g}^\#}
	}$$
	where $\hat{f}^\#$ and $\hat{g}^\#$ are the algebra homomorphisms induced by $f$ and $g$.
	\begin{cor}[{\cite[Corollary 1.6]{AK00}}]\label{composition of toroidal is toroidal}
		If $f:(U_X\subset X)\rightarrow (U_Y\subset Y)$ and $g:(U_Y\subset Y)\rightarrow (U_Z\subset Z)$ are toroidal morphism, then $g\circ f:(U_X\subset X)\rightarrow (U_Z\subset Z)$ is toroidal.
	\end{cor}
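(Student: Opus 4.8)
\textit{Proof plan.} The plan is to verify the defining condition of a toroidal morphism pointwise. Fix a closed point $x\in X$, set $y:=f(x)$ and $z:=g(y)=(g\circ f)(x)$, and aim to produce local models $(X_\sigma,s)$ at $x$ and $(X_\rho,r)$ at $z$, together with a toric morphism $X_\sigma\to X_\rho$ making the induced square of complete local rings commute. Once such \emph{compatible} models are in place, the rest is purely formal: a composition of toric morphisms is toric, since the composition of the underlying maps of fans is again a map of fans; and stacking the commutative square for $f$ (over $\hat{\Co}_{B,y}$) on top of the commutative square for $g$ (over $\hat{\Co}_{B,y}$), which share the row $\hat{\Co}_{B,y}\cong\hat{\Co}_{X_\tau,t}$, yields the required commutative square for $g\circ f$ at $x$. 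So all the content lies in choosing the middle local model at $y$ in a way that is simultaneously usable for $f$ and for $g$.

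First I would apply toroidality of $g$ at the point $y$: this furnishes local models $(X_\tau,t)$ at $y$ and $(X_\rho,r)$ at $z$ and a toric morphism $h_g\colon X_\tau\to X_\rho$ fitting into the commutative diagram of completed local rings. The crux is then to feed \emph{this particular} model $(X_\tau,t)$ into the toroidal structure of $f$, i.e. to know that the target local model in the toroidal structure of $f$ at $x$ may be taken to be any prescribed local model at $y$ — equivalently, by Proposition \ref{etale local model}, any \'etale chart from a neighbourhood of $y$ to an affine toric variety $X_\tau$ sending $U_B$ to the preimage of the big torus — and that accordingly there exist a local model $(X_\sigma,s)$ at $x$ and a toric morphism $h_f\colon X_\sigma\to X_\tau$ compatible with $f$. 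Granting this, I compose $X_\sigma \xrightarrow{h_f} X_\tau \xrightarrow{h_g} X_\rho$ and take $(X_\sigma,s)$, $(X_\rho,r)$ as the sought local models at $x$ and $z$.

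The main obstacle is precisely the flexibility statement just invoked: reconciling the two \emph{a priori} unrelated local models at the intermediate point $y$, one coming from the toroidal structure of $f$ and one from that of $g$. Its justification rests on the rigidity of the formal–local structure of a strict toroidal embedding: the pair consisting of $\hat{\Co}_{B,y}$ together with the ideal of $B\setminus U_B$ is determined, up to boundary–preserving formal isomorphism, by the cone of the stratum of $U_B\subset X$ through $y$ (and the dimension of that stratum), so any two local models at $y$ differ by such a formal isomorphism. One then checks that a boundary–preserving formal automorphism of an affine toric variety at its torus–fixed point is sufficiently tame — it fixes or permutes the toric boundary components and, modulo such a permutation, multiplies each monomial $\chi^m$ by a unit — that the toric morphism $X_{\sigma_0}\to X_{\tau_0}$ supplied by the given toroidal structure of $f$ can be transported along it to a toric morphism into the prescribed $X_\tau$. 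Modulo this formal–geometric input, which is the heart of \cite{AK00} (and ultimately of \cite{KKMS73}) and which I would cite rather than reprove, the corollary follows by the stacking argument above.
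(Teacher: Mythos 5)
The paper offers no argument for this statement at all: it is quoted directly from \cite[Corollary 1.6]{AK00}, so there is no internal proof to compare yours against. Your outline -- stack the commutative square of completed local rings for $f$ at $x$ on the one for $g$ at $y=f(x)$, and use that a composite of toric morphisms is toric -- is the natural one, and you correctly isolate the only substantive issue: the local model $(X_\tau,t)$ at $y$ furnished by the toroidal structure of $g$ need not coincide with the target model appearing in the toroidal structure of $f$, so one needs that $f$ is formally toric with respect to an \emph{arbitrarily prescribed} local model at $y$.

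However, the mechanism you sketch for closing that gap does not work as stated, and in the end you cite it anyway. Knowing that two local models at $y$ differ by a boundary-preserving isomorphism of completed local rings does not let you ``transport'' the given toric morphism $X_{\sigma_0}\to X_{\tau_0}$ to the new chart: pre- or post-composing a toric morphism with a formal automorphism that merely permutes boundary components and multiplies monomials by units is in general no longer toric, so producing a genuinely toric $h_f\colon X_\sigma\to X_\tau$ compatible with the prescribed chart requires a real argument, not just tameness of the automorphism. The standard way to handle this (in \cite{KKMS73} and \cite{AK00}) is to make the intermediate model canonical: at $y$ the cone $\tau$ is recovered intrinsically from the monoid of effective Cartier divisors near $y$ supported on $Y\setminus U_Y$ (equivalently, from the conical polyhedral complex attached to the strict toroidal embedding), and a toroidal morphism induces a map of these intrinsic cones; with canonical models the two squares automatically share their middle row and your stacking argument goes through. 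Since this chart-independence is exactly the heart of the corollary and you defer it to the same reference the paper cites for the whole statement, your write-up is a reasonable sketch but not an independent proof; to make it self-contained you would need to prove the chart-independence lemma (or argue via the intrinsic description just indicated) rather than cite it.
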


	\begin{defn}[{\cite[Definition 2.2]{ALT19}}]
		Let $f:(X,D)\rightarrow (Z,B)$ be a projective morphism between projective normal log pairs with connected fibers, we say $f$ is semistable if
		\begin{itemize}
			\item the varieties $X$ and $Z$ admit toroidal structures $U_X:=X\setminus D\subset X$ and $U_Z:=Z\setminus B\subset Z$,
			\item with this structure, the morphism $f$ is toroidal,
			\item the morphism $f$ is equidimensional,
			\item all the fibers of the morphism $f$ are reduced, and
			\item $X$ and $Z$ are nonsingular.
		\end{itemize}
	\end{defn}
	\begin{thm}[Semistable Reduction]\label{Semistable reduction}
		Let $X\rightarrow Z$ be a projective morphism between projective normal varieties and $D\subset X$ be a closed subset. Then there exists a proper, surjective, generically finite morphism of irreducible varieties $b:Z'\rightarrow Z$, a projective birational morphism of irreducible varieties $a:X'\rightarrow (X\times_Z Z')^m$, where $(X\times_Z Z')^m$ is the main component of the fiber product $X\times_Z Z'$, and divisors $B'\subset Z'$, $D'\subset X'$, such that
		\begin{itemize}
			\item $a^{-1}(D\times _Z Z')\cup f'^{-1}(B')\subset D'$, and
			\item the morphism $f':(X',D')\rightarrow (Z',B')$ is semistable. 
		\end{itemize}
		\begin{proof}
			This is a direct result of {\cite[Theorem 4.7]{ALT19}}.
		\end{proof}
	\end{thm}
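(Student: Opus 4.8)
The plan is to obtain this as essentially a restatement of the semistable reduction theorem of Adiprasito--Liu--Temkin, \cite[Theorem 4.7]{ALT19}: applied to the projective morphism $X\rightarrow Z$ of projective varieties together with the closed subset to be tracked, that theorem produces precisely a proper surjective generically finite $b:Z'\rightarrow Z$, a projective birational modification $a:X'\rightarrow (X\times_Z Z')^m$ of the main component, and divisors $B'\subset Z'$, $D'\subset X'$ with $a^{-1}(D\times_Z Z')\cup f'^{-1}(B')\subset D'$ and $f':(X',D')\rightarrow (Z',B')$ semistable. So the only real task is to match our hypotheses to the input of \cite[Theorem 4.7]{ALT19}.

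The reductions I would carry out first are minor: (i) if necessary, replace the closed subset $D$ by a reduced divisor --- blow up $X$ along $D$, resolve, and enlarge $D$ to contain the exceptional locus; this only grows supports, so the containment $a^{-1}(D\times_Z Z')\subset D'$ becomes easier, not harder; (ii) observe it costs nothing to track an auxiliary divisor on $Z$ as well, since \cite[Theorem 4.7]{ALT19} is stated for morphisms of pairs $(X,D)\rightarrow(Z,B)$. The machinery inside that theorem, which I would only sketch and not reprove, runs as follows: first toroidalize $X\rightarrow Z$ using Hironaka resolution of $X$, $Z$ and the marked divisors together with toroidalization of dominant morphisms (as in \cite{AK00}), producing strict toroidal embeddings with the induced morphism $f_1$ toroidal and all markings inside the boundary; then perform combinatorial semistable reduction on the associated cone complexes --- a lattice alteration plus subdivision of the base complex, with a compatible subdivision upstairs --- yielding an equidimensional toroidal morphism with reduced fibres; then resolve the (toroidal, hence mild) singularities of the total space and the base. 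By Corollary \ref{composition of toroidal is toroidal} these toroidal steps compose to a toroidal morphism, and since each step sends boundary into boundary the containment $a^{-1}(D\times_Z Z')\cup f'^{-1}(B')\subset D'$ persists.

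The step I expect to be the genuine obstacle --- and the reason the theorem can be cited rather than reproved here --- is the combinatorial semistable reduction over a base of dimension $>1$. Over a one-dimensional base this goes back to \cite{KKMS73}, and \cite{AK00} already gives \emph{weak} semistable reduction in all dimensions, but only with possibly non-reduced fibres and mild quotient singularities; obtaining genuinely reduced fibres and a smooth total space in arbitrary dimension is exactly the main achievement of \cite{ALT19}, resting on their polyhedral ``semistable reduction'' combinatorics. Everything else --- $b$ being generically finite, $a$ being birational onto the main component, and the boundary bookkeeping --- is formal once that combinatorial input is in hand.
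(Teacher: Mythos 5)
Your proposal is correct and takes exactly the same approach as the paper: the statement is a restatement of Adiprasito--Liu--Temkin \cite[Theorem 4.7]{ALT19}, and the paper's proof is simply that citation. The extra context you give about the internal machinery (toroidalization, combinatorial semistable reduction, resolution) is accurate but not needed, since both you and the paper rely on the citation as the actual argument.
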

	\begin{lemma}[{\cite[Lemma 6.2]{AK00}}]\label{base change of a toroidal morphism is a toroidal morphism}
		Let $f:(X,D)\rightarrow (Z,B)$ be a semistable morphism. Let $g:C\rightarrow Z$ be a morphism such that $C$ is nonsingular and $g^{-1}(B)$ is a normal crossing divisor. Let $X_C=C\times_Z X$ and $g_C:X_C\rightarrow X,f_C:X_C\rightarrow C$ the two projections.
		
		Denote $B_C=g^{-1}(B)$ and $D_C=g_C^{-1}(D)$. Then $(U_{C}:=C\setminus B_C \subset C)$ and $(U_{X_C}:=X_C\setminus D_C \subset X_C)$ are toroidal embeddings, and $f_C:(U_{X_C}\subset X_C)\rightarrow (U_C\subset C)$ is an equidimensional toroidal morphism with reduced fibers.
	\end{lemma}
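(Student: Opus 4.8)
I would prove this by reducing to an explicit local toric computation, the point being that a semistable morphism has a very rigid étale‑local shape which the base change can only improve.

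\smallskip
\noindent\textbf{Set‑up and reduction.} The assertion that $U_C\subset C$ is toroidal is immediate, since $C$ is nonsingular and $B_C=g^{-1}(B)$ is simple normal crossing; so I may take a smooth affine toric chart for it. All remaining claims are étale‑local on $X_C$, so I fix a closed point $w\in X_C$ and put $x=g_C(w)\in X$, $z'=f_C(w)\in C$ and $z=f(x)=g(z')\in Z$, and aim to produce a toric chart of $X_C$ at $w$ compatible with $f_C$. By Proposition~\ref{etale local model} and the definition of a semistable morphism (equidimensional, toroidal, reduced fibres, $X$ and $Z$ nonsingular), I may choose étale charts
\[
Z=\mathbb A^{k}_{u_1,\dots,u_k}\times(\mathbb C^*)^{n-k},\ \ B=\{u_1\cdots u_k=0\},\qquad X=\mathbb A^{p}_{y_1,\dots,y_p}\times(\mathbb C^*)^{m-p},\ \ D=\{y_1\cdots y_p=0\},
\]
in which $f^*u_i=\prod_{j\in S_i}y_j$ for $1\le i\le k$ and $f^*u_{k+i}=y_{p+i}$ for $1\le i\le n-k$, where $S_1,\dots,S_k$ are pairwise disjoint nonempty subsets of $\{1,\dots,p\}$ with union $\{1,\dots,p\}$ — the disjointness being precisely the combinatorial form, via Theorem~\ref{fibration of toric morphism}, of equidimensionality and reducedness of the fibres. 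Because $B_C=g^{-1}(B)$ exactly, at $z'$ I get an étale chart $C=\mathbb A^{e}_{v_1,\dots,v_e}\times(\mathbb C^*)^{d-e}$ with $B_C=\{v_1\cdots v_e=0\}$, and $g^*u_i=\varepsilon_i\prod_{j\le e}v_j^{a_{ij}}$ for $1\le i\le k$, $g^*u_{k+i}=h_i$ for $1\le i\le n-k$, where $a_{ij}\ge 0$, the $\varepsilon_i,h_i$ are units near $z'$, and every $v_j$ with $j\le e$ occurs with positive exponent in some $g^*u_i$, $i\le k$ (again since $B_C=g^{-1}(B)$).

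\smallskip
\noindent\textbf{The fibre product.} In these charts $X_C=C\times_ZX$ is cut out in $\mathbb A^e_v\times(\mathbb C^*)^{d-e}\times\mathbb A^p_y\times(\mathbb C^*)^{m-p}$ by $\varepsilon_i\prod_{j\le e}v_j^{a_{ij}}=\prod_{j\in S_i}y_j$ ($i\le k$) and $h_i=y_{p+i}$ ($i\le n-k$); the latter equations merely eliminate $y_{p+1},\dots,y_{p+n-k}$. Choosing $j_i\in S_i$ for each $i$, the substitution $y_{j_i}\mapsto\varepsilon_i(v)^{-1}y_{j_i}$ is an étale‑local automorphism — well defined exactly because the $S_i$ are disjoint — which clears all the units, so étale‑locally $X_C$ is the binomial variety $\{\,\prod_{j\le e}v_j^{a_{ij}}=\prod_{j\in S_i}y_j:\ 1\le i\le k\,\}$ times a torus. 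Moreover $f_C:X_C\to C$ is flat, being a base change of $f$, which is flat since $X$ is Cohen–Macaulay and equidimensional over the regular $Z$; its fibre over $c\in C$ is canonically $X\times_Z\{g(c)\}$, hence reduced and Cohen–Macaulay, so $X_C$ is Cohen–Macaulay, and since the general fibre is smooth the non‑smooth locus of $X_C$ has codimension at least $2$; by Serre's criterion $X_C$ is normal. A normal binomial variety is an affine toric variety, and — once more using $B_C=g^{-1}(B)$ — its big torus is exactly $U_{X_C}=X_C\setminus g_C^{-1}(D)$, its toric boundary is $D_C$, and the projection $X_C\to C$ is a toric morphism. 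Hence $U_{X_C}\subset X_C$ is a (strict) toroidal embedding and $f_C$ is toroidal; it is equidimensional with reduced fibres by the fibre identification above together with a dimension count.

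\smallskip
\noindent\textbf{Main obstacle.} The load‑bearing input is the local normal form of a semistable morphism with the disjointness property — the toric structure theory of equidimensional reduced‑fibre toric morphisms behind Theorem~\ref{fibration of toric morphism} — together with the attendant bookkeeping: verifying that after clearing units the binomial scheme really is normal (so that $C\times_ZX$ needs no normalization) and that its toric boundary coincides with $D_C$. The reduction to a local statement, the elimination of the horizontal coordinates, and the flatness argument are routine.
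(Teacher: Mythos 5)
The paper offers no proof of this lemma at all: it is quoted directly from \cite{AK00}, Lemma 6.2. So the only meaningful comparison is with Abramovich--Karu's own argument, and your strategy is essentially theirs: reduce to \'etale-local toric charts via Proposition \ref{etale local model}, encode equidimensionality and reducedness of fibres combinatorially to get the monomial normal form, and check that the base-changed chart is again toric of the same type. Two inaccuracies in your normal form are harmless but should be repaired: the union of the $S_i$ need not be $\{1,\dots,p\}$, since $D$ may have horizontal components not lying over $B$ (and in this paper's application it does; the extra coordinates just come along as free variables), and $f^*u_i$, $f^*u_{k+i}$ are in general monomials only up to unit characters of the torus factor of the chart --- your trick of absorbing units into a chosen $y_{j_i}$ handles these too, but as written the normal form is slightly too rigid.

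The genuine soft spot is the normality of $X_C$, which is the load-bearing step (it is exactly what makes the \emph{unnormalized} fibre product a toroidal embedding). From ``the general fibre is smooth'' you may only conclude that the non-smooth locus of $X_C$ lies over a proper closed subset of $C$, hence inside $f_C^{-1}(B_C)$, which is a codimension-one subset of $X_C$; a priori $X_C$ could still be singular along a divisorial component of a special fibre, and Serre's criterion would fail. To reach codimension $\geq 2$ you must combine three facts, all available to you but not invoked at this point: (i) by flatness of $f_C$ over the smooth $C$, $\mathrm{Sing}(X_C)$ is contained in the union of the singular loci of the fibres of $f_C$; (ii) every fibre is reduced of pure dimension $d$, hence (in characteristic zero) generically smooth, so each fibrewise singular locus has dimension $\leq d-1$; (iii) the fibres of $f$ over \emph{every} point of $U_Z$, not merely over the generic point of $Z$, are smooth --- this is needed because $g(C)$ may be a proper subvariety of $U_Z$, and it follows from your own local model (over a boundary-free chart of $Z$ the map is a projection followed by a surjective torus homomorphism, smooth in characteristic zero), but it is nowhere stated, and without it ``the general fibre of $f_C$ is smooth'' is unjustified. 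With (i)--(iii) one gets $\dim\mathrm{Sing}(X_C)\leq(\dim C-1)+(d-1)$, so $R_1$ plus $S_2$ gives normality; note also that your final identification of $X_C$ with a toric variety needs irreducibility, which you should extract from connectedness of the fibres of $f$ together with flatness over the irreducible $C$ before quoting ``a normal binomial variety is toric'' with big torus $U_{X_C}$ and boundary $D_C$.
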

	
	\begin{lemma}\label{log canonical center is projective bundle}
		Let $X$ be a projective normal variety, $D$ a reduced divisor on $X$, and $U_X:=X\setminus D\subset X$ a toroidal embedding. Suppose $\Delta\leq D$ is a $\mathbb{Q}$-divisor such that $(X,\Delta)$ is sub-log canonical. 
		
		If $P$ is a log canonical place of $(X,\Delta)$, then $P$ is birational equivalent to $\mathbb{P}^r\times V$, where $V$ is the image of $P$ in $X$ and $r=\mathrm{dim}X-\mathrm{dim}V-1$.
		\begin{proof}
			Let $P$ be a log canonical place of $(X,\Delta)$, suppose $x$ is a general point of the image of $P$ on $X$. For the rest of the proof, we consider Zariski locally near $x$ by replacing $X$ with an open neighborhood of $x$. 
			
			Let $x$ be a general point of $V\subset X$ and $X_\sigma$ the affine toric variety defined in Proposition \ref{etale local model}. Let $\sigma\subset \sigma'$ be a subdivision such that $X_{\sigma'}\rightarrow X_\sigma$ is a resolution. Because $\pi$ is \'etale, $X_1:=X_{\sigma'}\times_{X_\sigma}X$ is a log resolution of $(X,D)$. We have the following diagram
			$$\xymatrix{
				X_1\ar[d]_{h} \ar[r]^{\pi_1} & X_{\sigma_1} \ar[d]^{h_{\sigma}} \\
				X  \ar[r]_{\pi}            & X_{\sigma}
			}$$
			
			Let $D_1$ be the strict transform of $D$ on $X_1$ plus the $h$-exceptional divisor, then $h:(U_1:=X_1\setminus D_1\subset X_1)\rightarrow (U_X\subset X)$ is a toroidal morphism. 
			By an easy computation of discrepancies on snc divisors, it is easy to see that $P$ can be obtained by a sequence of blows up along strata of $(X',D')$. We will show that such morphism is \'etale locally equal to a toric morphism between toric varieties.

			Suppose we have a sequence of blows up $h_i:X_{i+1}\rightarrow X_i,1\leq i\leq k-1$ along a strata $V_i$ of $(X_i,D_i)$, where $D_{i+1}$ is the strict transform of $D_i$ plus the $h_i$-exceptional divisor, so that $P$ is a divisor on $X_k$. Next, we show that there is a Cartesian diagram
			$$\xymatrix{
				X_j \ar[d]_{g_j}\ar[r]^{\pi_j} & X_{\sigma_j} \ar[d]\\
				X_1 \ar[r]^{\pi_1}    & X_{\sigma_1}
			}$$
			where 
			\begin{itemize}
				\item the horizontal arrows are \'etale morphisms, 
				\item $\sigma_j$ is a subdivision of $\sigma_1$,
				\item $X_{\sigma_j}\rightarrow X_{\sigma_1}$ is the corresponding toric morphism, and
				\item near any closed point of $g_j^{-1}x_1$, we have $U_{j}=\pi_{j}^{-1}T_{j}$, where $T_{j}$ is the big torus of $X_{\sigma_{j}}$,
			\end{itemize}
			for all $1\leq j\leq k$.
			
			Suppose it is true for $j=i$. 
			Let $X_{\sigma_{i+1}}\rightarrow X_{\sigma_i}$ be the toric morphism determined by blowing up $X_{i}$ along the image of $V_i$ on $X_{\sigma_i}$.
			Because blowing up is uniquely determined by local equations and both $X_{i+1}\rightarrow X_i$ and $X_{\sigma_{i+1}}\rightarrow X_{\sigma_i}$ are obtained by blowing up the same subvariety \'etale locally, then there is a natural \'etale morphism $\pi_{i+1}:X_{i+1}\rightarrow X_{\sigma_{i+1}}$ such that near any closed point of $g_j^{-1}x$, we have $U_{i+1}=\pi_{i+1}^{-1}T_{i+1}$, where $T_{i+1}$ is the big torus of $X_{\sigma_{i+1}}$. Because the composition of $X_{\sigma_{i+1}}\rightarrow X_{\sigma_i}$ and $X_{\sigma_i}\rightarrow X_{\sigma_1}$ is a toric morphism, the claim is true for $j=i+1$.
			
			Now we have the following Cartesian diagram 
			$$\xymatrix{
				X_k\ar[d]_{f} \ar[r]^{\pi_k} & X_{\sigma_k} \ar[d]^{f_{\sigma}} \\
				X  \ar[r]_{\pi}            & X_{\sigma}.
			}$$
			By assumption, $P$ is a divisor on $X_k$ and $\pi_k$ is \'etale near the general point of $P$. Then $P$ is equal to the pull back of a divisor $P_{\sigma_k}$ on $X_{\sigma_k}$. Because $\sigma_k\rightarrow \sigma$ is a subdivision, by Lemma \ref{exceptional divisor of toric variety is projective bundle}, $f_\sigma|_{P_{\sigma_k}}$ is birationally equivalent to a $\mathbb{P}^r$-bundle. Because the diagram is Cartesian, $f|_P$ is also birationally equivalent to a $\mathbb{P}^r$-bundle. Therefore, $P$ is birationally equivalent to $f(P)\times \mathbb{P}^r$. 
			
		\end{proof}
	\end{lemma}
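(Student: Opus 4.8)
The plan is to reduce the statement to its toric analogue, Lemma \ref{exceptional divisor of toric variety is projective bundle}, via the étale-local toric description of a toroidal embedding. Since the conclusion is birational and only concerns the generic structure of the morphism $P\to V$, I may replace $X$ by a small Zariski neighbourhood of a general point $x$ of $V$; then by Proposition \ref{etale local model} there are an affine toric variety $X_\sigma$ and an étale morphism $\pi\colon X\to X_\sigma$ with $U_X=\pi^{-1}(T_\sigma)$, so that $D$ is étale-locally the toric boundary. The crucial structural observation is that, because $\Delta\le D$ and $(X,\Delta)$ is sub-log canonical, $P$ is necessarily a \emph{toroidal} divisorial valuation: a discrepancy computation on the toroidal (étale-locally toric, hence snc) pair $(X,D)$ shows that $P$ is extracted by a finite sequence of blow-ups $h_i\colon X_{i+1}\to X_i$, each along a stratum $V_i$ of the toroidal structure $(X_i,D_i)$ — one first passes from $X$ to the log resolution $X_1:=X_{\sigma'}\times_{X_\sigma}X$ induced by a toric resolution $X_{\sigma'}\to X_\sigma$, and after that only stratum blow-ups are needed — until $P$ becomes a prime divisor on the final model $X_k$.

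The technical core is an induction on $i$ producing a Cartesian square with étale horizontal arrows
$$
\xymatrix{
X_i \ar[d] \ar[r]^{\pi_i} & X_{\sigma_i} \ar[d] \\
X_1 \ar[r]^{\pi_1} & X_{\sigma_1}
}
$$
in which $\sigma_i$ is a subdivision of $\sigma_1$ (hence of $\sigma$), $X_{\sigma_i}\to X_{\sigma_1}$ is the associated toric morphism, and $U_i=\pi_i^{-1}(T_{\sigma_i})$ near the fibre over $x$. In the inductive step one uses that a blow-up is determined by the ideal sheaf of its centre; that the centre $V_i$, being a stratum of $(X_i,D_i)$, descends étale-locally to a torus-invariant subvariety of $X_{\sigma_i}$; and that étale base change commutes with blowing up. Composing the tower yields a Cartesian square relating $X_k\to X$ and $X_{\sigma_k}\to X_\sigma$ with $\pi_k$ étale near the generic point of $P$, whence $P=\pi_k^{-1}(P_{\sigma_k})$ for a torus-invariant prime divisor $P_{\sigma_k}$ on $X_{\sigma_k}$ that is exceptional over $X_\sigma$.

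To finish, I would apply Lemma \ref{exceptional divisor of toric variety is projective bundle}: since $\sigma_k\to\sigma$ is a subdivision, $f_\sigma|_{P_{\sigma_k}}$ (where $f_\sigma\colon X_{\sigma_k}\to X_\sigma$) is birationally a $\mathbb{P}^r$-bundle over its image. Because the square relating $X_k\to X$ and $X_{\sigma_k}\to X_\sigma$ is Cartesian with étale horizontal maps, $f|_P\colon P\to V$ is likewise birationally a $\mathbb{P}^r$-bundle, so $P$ is birationally equivalent to $V\times\mathbb{P}^r$; comparing dimensions ($\dim P=\dim X-1$) forces $r=\dim X-\dim V-1$.

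The step I expect to be the main obstacle is the first one: showing that an arbitrary log canonical \emph{place} of $(X,\Delta)$, under only the hypothesis $\Delta\le D$, is genuinely toroidal and is obtained purely by blow-ups of strata rather than by some more general birational modification — together with making precise the inductive ``étale-locally toric blow-up'' bookkeeping, in particular that the successive centres descend compatibly to the toric side and that the subdivisions $\sigma_i$ assemble into an honest refinement of $\sigma$.
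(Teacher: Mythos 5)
Your proposal follows essentially the same route as the paper's own proof: localize at a general point of $V$, use Proposition \ref{etale local model} to pass to the \'etale-local toric model, realize $P$ by a tower of stratum blow-ups that descends \'etale-locally to toric subdivisions via Cartesian squares, and then transfer Lemma \ref{exceptional divisor of toric variety is projective bundle} back to conclude $P$ is birational to $V\times\mathbb{P}^r$. The step you flag as the main obstacle (that a log canonical place of $(X,\Delta)$ with $\Delta\le D$ is toroidal and reached by stratum blow-ups) is treated in the paper at the same level of detail, via a discrepancy computation on the snc model $(X_1,D_1)$, so your plan is correct and essentially identical.
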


	\section{Moduli of polarised Calabi-Yau pairs}
	In this section we recall some definitions and results on moduli of stable pairs and polarised log Calabi-Yau pairs, see \cite{Kol23}, \cite{KX20}, \cite{Bir22}, and \cite{Bir23}. We fix natural numbers $d,n$ and positive rational numbers $c,v$.
	
	\begin{defn}\label{definition of log Calabi-Yau pairs}
		A log Calabi-Yau pair is a semi-log canonical pair $(X,\Delta)$ such that $K_X+\Delta\sim_{\mathbb{Q}} 0$.
		
		A polarised log Calabi-Yau pair consists of a log Calabi-Yau pair $(X,\Delta)$ and an ample integral divisor $N\geq 0$ such that $(X,\Delta+uN)$ is semi-log canonical for some real number $u>0$. Fix a natural number $d$ and positive rational numbers $c,v$.
		
		A $(d,c,v)$-polarised log Calabi-Yau pair is a polarised log Calabi-Yau pair $(X,\Delta),N$ such that $\mathrm{dim}X=d,\Delta=cD$ for some integral divisor $D$, and $\vol(N)=v$.
	\end{defn}
	\begin{defn}
		Let $S$ be a reduced scheme. A $(d,c,v)$-polarised Calabi-Yau family over $S$ consists of a projective morphism $f:X\rightarrow S$ of schemes, a $\mathbb{Q}$-divisor $B$, and an integral divisor $N$ on $X$ such that
		\begin{itemize}
			\item $(X,B+uN)\rightarrow S$ is a stable family for some rational number $u>0$ with fibers of pure dimension $d$,
			\item $B=cD$ where $D\geq 0$ is a relative Mumford divisor,
			\item $N\geq 0$ is a relative Mumford divisor,
			\item $K_{X/S}+B\sim_{\mathbb{Q}}0/S$, and
			\item for any fiber $X_s$ of $f$, $\mathrm{vol}(N|_{X_s})=v$.
		\end{itemize}
	\end{defn}
	\begin{rem}
		The definition of $(d,c,v)$-polarised log Calabi-Yau pair and $(d,c,v)$-polarised Calabi-Yau families comes from the Chapter 7 in the first arxiv version of \cite{Bir23}.
	\end{rem}
	\begin{lemma}\label{lc threshold is bounded from below}
		There exist a positive rational number $t$ and a natural number $r$ such that $rc,rt\in \mathbb{N}$ satisfying the following. Assume $(X,B),N$ is a $(d,c,v)$-polarised slc Calabi-Yau pair over a field of characteristic zero. Then
		\begin{itemize}
			\item $(X,B+tN)$ is slc,
			\item $B+tN$ uniquely determines $B,N$, and
			\item $r(K_X+B+tN)$ is very ample with
			$$h^j(mr(K_X+B+tN))=0$$
			for $m,j>0$.
		\end{itemize}
		\begin{proof}
			This is Lemma 7.7 in the first arxiv version of \cite{Bir23}.
		\end{proof}
	\end{lemma}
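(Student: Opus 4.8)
The plan is to derive all three assertions from the boundedness of the family of $(d,c,v)$-polarised slc Calabi-Yau pairs together with their polarising divisors, which is the main boundedness theorem of \cite{Bir23}; the remaining arguments are then essentially bookkeeping. So I would first invoke that there is a bounded family $(\Cx,\Cb,\Cn)\to \Ct$ with $\Ct$ of finite type and $\Cb=c\Cd$, such that every $(d,c,v)$-polarised slc Calabi-Yau pair $(X,B),N$ arises as a fibre. After Noetherian induction and stratification of $\Ct$, I may assume $\Ct$ is reduced, $\Cx\to \Ct$ is flat with slc fibres, $\Cd$ and $\Cn$ are relative Mumford divisors, $K_{\Cx/\Ct}$ is $\mathbb{Q}$-Cartier, and there is a uniform $I\in\mathbb{N}$ clearing the Cartier indices of $K_{\Cx/\Ct}$, $\Cd$ and $\Cn$ on every fibre; I also fix a uniform bound $M$ for the multiplicities of $\Cd_t$ and $\Cn_t$ along their prime components.

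For the threshold, note that every fibre $(X_t,B_t),N_t$ is a polarised slc Calabi-Yau pair, so $(X_t,B_t+u_tN_t)$ is slc for some $u_t>0$, i.e.\ $\mathrm{lct}(X_t,B_t;N_t)>0$. Since the log canonical threshold is a constructible, $\mathbb{Q}$-valued function on $\Ct$, after refining the stratification it is constant and positive on each of the finitely many strata; let $t_0>0$ be a rational number at most the minimum of these finitely many values, and set $t:=\min\{t_0,\,c/(M+1)\}$. Then $t$ is a positive rational number and $(X,B+tN)$ is slc for every $(d,c,v)$-polarised slc Calabi-Yau pair, since $t\le t_0\le \mathrm{lct}(X,B;N)$.

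The uniqueness is elementary given $t<c/(M+1)$: write $B=cD$ with $D$ an integral divisor as in the definition; for a prime divisor $E$ on $X$ one has $\mathrm{coeff}_E(B+tN)=cm+tn$ with $m=\mathrm{mult}_E D$ and $n=\mathrm{mult}_E N$ in $\{0,1,\dots,M\}$, and if $cm+tn=cm'+tn'$ with $m\ne m'$ then $c\le c|m-m'|=t|n-n'|\le tM<c$, a contradiction; hence $m=m'$ and then $n=n'$. Thus $B+tN$ recovers $\mathrm{mult}_E D$ and $\mathrm{mult}_E N$ for every $E$, hence determines $D$ and $N$, hence $B=cD$ and $N$. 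Now choose $r\in\mathbb{N}$ with $r\ge 2$, divisible by $I$, and such that $rc,rt\in\mathbb{N}$; then $r(K_X+B+tN)$ is Cartier. Since $K_X+B\sim_{\mathbb{Q}}0$ we have $K_X+B+tN\sim_{\mathbb{Q}}tN$, which is ample, so $r(K_X+B+tN)$ is an ample Cartier divisor and $(X,B+tN)$ is an slc pair with ample log canonical class. After enlarging $r$ if necessary, $r(K_X+B+tN)$ is very ample for all such pairs, by the log boundedness of the family $(\Cx,\Cb+t\Cn)\to\Ct$ together with an effective Matsusaka-type bound for bounded families. Finally, for $m,j>0$ we have $mr(K_X+B+tN)-(K_X+B+tN)=(mr-1)(K_X+B+tN)\sim_{\mathbb{Q}}(mr-1)tN$, which is ample because $mr\ge 2$, so $H^j\bigl(X,\mathcal{O}_X(mr(K_X+B+tN))\bigr)=0$ by the Kodaira-type vanishing theorem for slc pairs.

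The main obstacle is the first step: the boundedness of $(d,c,v)$-polarised slc Calabi-Yau pairs together with their polarisations. This is the genuinely deep input (established in \cite{Bir23} via complements, the canonical bundle formula, and boundedness of polarised pairs), and in its proof the pair-independence of $t$ and the boundedness of the Cartier index are obtained essentially in tandem; granting boundedness, the steps above are routine.
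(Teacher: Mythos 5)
The paper offers no argument of its own here: its ``proof'' is a citation of Lemma 7.7 in the first arXiv version of \cite{Bir23}. So the real comparison is with Birkar's proof of that lemma, and your reconstruction follows essentially the same route: take as input the boundedness of $(d,c,v)$-polarised slc Calabi--Yau pairs together with their polarisations (the genuinely deep theorem of \cite{Bir23}, proved there independently of and prior to Lemma 7.7, so there is no circularity), then extract a uniform positive $t$ from constructibility of the log canonical threshold on finitely many strata, get uniqueness of $(B,N)$ from $B+tN$ by coefficient arithmetic using $t$ small relative to $c$ and the uniform bound on multiplicities, and obtain uniform very ampleness and vanishing from the family structure together with Fujino's vanishing theorem for projective slc pairs (your reduction $mr(K_X+B+tN)-(K_X+B+tN)\sim_{\mathbb{Q}}(mr-1)tN$ ample is the right one, and ampleness rather than nef and big is exactly what that theorem needs in the slc case).

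Two steps are thinner than you present them, though neither is fatal. First, the assertion that after stratification you ``may assume'' a uniform $I$ clearing the Cartier indices of $K_{\Cx/\Ct}$, $\Cd$ and $\Cn$ on every fibre is not a formal consequence of boundedness: reflexive/divisorial sheaves on slc fibres do not automatically commute with restriction to fibres, so one needs either Koll\'ar's machinery (Mumford divisors, constructibility of the locus where reflexive powers are compatible with base change) or a separate index-boundedness argument for slc Calabi--Yau pairs; this is precisely part of what the cited lemma packages, and it deserves more than a parenthetical. Second, an ``effective Matsusaka-type bound'' is not the appropriate tool for slc pairs; the correct (and sufficient) argument is the standard one on the parametrizing family: after stratification $\mathcal{O}(r(K+B+tN))$ is a line bundle on the family, fibrewise ampleness is open, some fixed multiple is relatively very ample over each stratum by EGA/Noetherian induction, and there are finitely many strata, after which you enlarge $r$ keeping $rc,rt\in\mathbb{N}$. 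With these two repairs your sketch is a faithful reconstruction of the lemma the paper cites.
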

	
	The following definition comes from Chapter 7 in the first arxiv version of \cite{Bir23}. 
	
	Let $t$ be as in the lemma. To simplify notation, let $\Theta=(d,c,v,t,r,\mathbb{P}^n)$. Let $S$ be a reduced scheme. A strongly embedded $\Theta$-polarised Calabi-Yau family over $S$ is a $(d,c,v)$-polarised Calabi-Yau family $f:(X,B),N\rightarrow S$ together with a closed embedding $g:X\rightarrow \mathbb{P}^n_S$ such that
	\begin{itemize}
		\item $(X,B+tN)\rightarrow S$ is a stable family,
		\item $f=\pi g$ where $\pi$ denotes the projection $\mathbb{P}^n_S\rightarrow S$,
		\item letting $\Cl:=g^*\Co_{\mathbb{P}^n_S}(1)$, we have $R^qf_*\Cl\cong R^q\pi_*\Co_{\mathbb{P}^n_S}(1)$ for all $q$, and
		\item for every $s\in S$, we have
		$$\Cl_s\cong \Co_{X_s}(r(K_{X_s}+B_s+tN_s)).$$
	\end{itemize}
	We denote the family by $f:(X\subset \mathbb{P}^n_S ,B),N\rightarrow S$.
	
	Define the functor $\Ce^s\mathcal{PCY}_{\Theta}$ on the category of reduced schemes by setting
	$$\Ce^s\mathcal{PCY}_{\Theta}(S) =\{\text{strongly embedded }\Theta\text{-polarised slc Calabi-Yau families over }S\}.$$
	
	By Proposition 7.8 in the first arxiv version of \cite{Bir23}, the functor $\Ce^s\mathcal{PCY}_{\Theta}$ has a fine moduli space, which is a reduced separated scheme $\Cs:=E^sPCY_{\Theta}$, and a universal family $(\Cx\subset \mathbb{P}^n_{\Cs},\Cd),\Cn\rightarrow \Cs$.

	\section{Proof of Main Theorem}
	
	\begin{lemma}\label{log bounded}
		Fix a natural number $d$ and a positive rational number $c$. Suppose $(X,\Delta)$ is an $\epsilon$-lc of dimension $d$, $-(K_X+\Delta)$ is ample and $\mathrm{coeff}\Delta\geq  c$. Then $(X,\Delta)$ is log bounded.
		\begin{proof}
			By the main theorem of \cite{Bir21}, $X$ is bounded. Then there exist a natural number $n$, two constants $V_1,V_2$ depending only on $d$ and $\epsilon$, and a very ample divisor $H$ on $X$ defining an embedding $X\subset \mathbb{P}^n$ such that $H^d\leq V_1$ and $H^{d-1}\cdot K_{X}\geq -V_2$. Because $\mathrm{coeff}\Delta\geq c$, we have
			\begin{equation*}
				\begin{aligned}
					cH^{d-1}\cdot\Supp \Delta & \leq H^{d-1}\cdot \Delta\\
					& = H^{d-1}\cdot(K_{X}+\Delta)-H^{d-1}\cdot K_{X} \\
					& \leq -H^{d-1}\cdot K_{X}\\
					& \leq V_2
				\end{aligned}
			\end{equation*}
			By the boundedness of the Chow variety, both $X$ and $\Supp\Delta$ are parametrized by a subvariety of the Hilbert scheme. Then $(X,\Delta)$ is log bounded.
		\end{proof}
	\end{lemma}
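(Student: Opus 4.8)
The plan is to bound the variety $X$ first, using Birkar's solution of the boundedness of $\epsilon$-lc Fano varieties, and then to bound the support of $\Delta$ by a one-line intersection estimate. Note that the hypotheses make $X$ an $\epsilon$-lc Fano type variety: since $\mathrm{coeff}\,\Delta\geq c>0$ we have $\Delta\geq 0$, and $(X,\Delta)$ being $\epsilon$-lc with $-(K_X+\Delta)$ ample exhibits $X$ as log Fano with $\epsilon$-lc singularities. Hence by \cite{Bir21} the set of such $X$ (for fixed $d$ and $\epsilon$) is bounded. In particular, after replacing the relatively very ample divisor of a bounding family by a uniformly bounded multiple, there are a natural number $n=n(d,\epsilon)$ and constants $V_1,V_2$ depending only on $d,\epsilon$, together with a very ample divisor $H$ on $X$ defining an embedding $X\subset\mathbb{P}^n$, such that $H^d\leq V_1$ and $-K_X\cdot H^{d-1}\leq V_2$.

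Next I would bound $\deg_H\Supp\Delta$. Since every coefficient of $\Delta$ is at least $c$, we have $c\,\Supp\Delta\leq\Delta$ as effective divisors, and therefore
\[
c\,H^{d-1}\cdot\Supp\Delta\ \leq\ H^{d-1}\cdot\Delta\ =\ H^{d-1}\cdot(K_X+\Delta)-H^{d-1}\cdot K_X\ \leq\ -H^{d-1}\cdot K_X\ \leq\ V_2,
\]
where the middle inequality holds because $-(K_X+\Delta)$ is ample and $H$ is ample. Thus $H^{d-1}\cdot\Supp\Delta\leq V_2/c$, a bound depending only on $d,c,\epsilon$.

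Finally I would invoke boundedness of the Chow variety (equivalently, of the relevant Hilbert schemes): closed subschemes of $\mathbb{P}^n$ of bounded dimension and bounded degree are parametrized by a scheme of finite type, so both $X$ and the reduced divisor $\Supp\Delta\subset X$ vary in a bounded family. This yields a projective morphism $\mathcal{Z}\to S$ with $S$ of finite type and a reduced divisor $\mathcal{E}\subset\mathcal{Z}$ realizing every pair $(X,\Supp\Delta)$ as some $(\mathcal{Z}_s,\mathcal{E}_s)$; that is, $(X,\Delta)$ is log bounded.

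The only substantial ingredient is the first step, the boundedness of $X$, which is precisely the main theorem of \cite{Bir21}; everything afterwards is a short intersection-theoretic computation together with the classical boundedness of the Hilbert/Chow scheme. The one technical point to handle carefully is that a bounding family supplies an ample, not a priori very ample, polarization, so one must pass to a uniformly bounded multiple before embedding in $\mathbb{P}^n$ and reading off the degree bounds $V_1,V_2$.
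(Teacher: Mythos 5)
Your proposal is correct and follows essentially the same route as the paper: boundedness of $X$ via \cite{Bir21}, the intersection estimate $cH^{d-1}\cdot\Supp\Delta\leq -H^{d-1}\cdot K_X\leq V_2$, and then boundedness of the Chow/Hilbert scheme to conclude log boundedness. Your added remark about replacing the polarization by a uniformly bounded multiple to ensure very ampleness is a reasonable clarification of a point the paper leaves implicit, but it does not change the argument.
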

	
	\begin{lemma}\label{1.3 implies 1.2}
		Theorem \ref{Main theorem, flat morphism}(2) implies Theorem \ref{Main theorem, flat morphism}(1) and Theorem \ref{Main theorem, locally stable morphism}.
		
		\begin{proof}
			Let $f:(X,\Delta)\rightarrow C$ be a fibration over a curve such that 
			\begin{itemize}
				\item $\mathrm{coeff}\Delta\subset c\mathbb{N}$,
				\item $-(K_X+\Delta)$ is nef and big over $C$, and
				\item the general fiber $(X_g,\Delta_g)$ of $f$ is $\epsilon$-lc Fano.
			\end{itemize}
			Fix a closed point $0\in C$, for any log canonical place $P$ of $(X,\Delta+\mathrm{lct}(X,\Delta;f^*0)f^*0)$, by Lemma \ref{relative complement}, there exists a birational map $Y\dashrightarrow X$ and a log pair $(Y,\Lambda_Y)$, such that
			\begin{itemize}
				\item $P$ is a log canonical place of $(Y,\Lambda_Y)$, 
				\item $(Y,\Lambda_Y)$ is log canonical over a neighborhood of $s$, and
				\item the general fiber $Y_g$ is isomorphic to $X_g$.
			\end{itemize}
			
			Let $\Delta_Y$ be the strict transform of $\Delta$ on $Y$. Because the general fiber $(X_g,\Delta_g)$ is $\epsilon$-lc, $-(K_{X_g}+\Delta_g)$ is ample and $\mathrm{coeff}\Delta_g\geq c$, by Lemma \ref{log bounded}, $(X_g,\Delta_g)$ is log bounded, then $(Y_g,\Delta_{Y_g})$ is also log bounded. 
			
			By log boundedness, there exists a natural number $m$ and an open subset $U\subset C$ such that $-m(K_{Y_u}+\Delta_{Y,u})$ is very ample without higher cohomology for any $u\in U$. Choose a general member $N\in |-m(K_Y+\Delta_Y)|_U$. By very ampleness, $(Y_g,\Lambda_{Y,g}+tN_g)$ is log canonical for a positive rational number $t$. Hence $(Y_g,\Lambda_{Y,g}),N_g$ is polarised Calabi-Yau pair. We have transferred the condition of Theorem \ref{Main theorem, flat morphism}(1) to (2).
			
			Next, we show that Theorem \ref{Main theorem, flat morphism} implies Theorem \ref{Main theorem, locally stable morphism}. 
			
			Suppose $f:(X,\Delta)\rightarrow C$ is a locally stable morphism. By definition, $X_s=f^*(s)$ is reduced and $(X,\Delta+f^*(s))$ is log canonical for every closed point $s\in C$. Then every irreducible component of $X_s$ is a log canonical place of $(X,\Delta+f^*(s))$. 
			
			Suppose Theorem \ref{Main theorem, flat morphism} is true. Because $\mathrm{mult}_Pf^*(s)=1$ for every irreducible component $P\subset X_s$, there exists a bounded family $\Cw\rightarrow \Ct$ and a finite dominant rational map $\Cw_t\dashrightarrow P$ whose degree is a factor of $\mathrm{min}\{l,\mathrm{mult}_Pf^*(s)\}=1$. Then $\Cw_t\dashrightarrow P$ is a birational map, which means $P$ is birationally bounded.
		\end{proof}
	\end{lemma}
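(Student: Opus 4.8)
Two reductions are needed: (A) deduce Theorem \ref{Main theorem, flat morphism}(1) from Theorem \ref{Main theorem, flat morphism}(2), and (B) deduce Theorem \ref{Main theorem, locally stable morphism} from the full Theorem \ref{Main theorem, flat morphism}; together these give the statement. Step (B) will be a formal translation of the definition of local stability, so the work is concentrated in Step (A).

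\textbf{Step (A): reducing the Fano case to the polarised Calabi--Yau case.} Take $f:(X,\Delta)\to C$ as in Theorem \ref{Main theorem, flat morphism}(1) (so $-(K_X+\Delta)$ is ample over $C$, $\mathrm{coeff}\,\Delta\subset c\mathbb{N}$, and $(X_g,\Delta_g)$ is $\epsilon$-lc), fix $0\in C$, and let $P$ be an lc place of $(X,\Delta+\mathrm{lct}(X,\Delta;f^*0)f^*0)$. First I would invoke Lemma \ref{relative complement}, whose hypotheses are met, to get a birational model $Y\dashrightarrow X$ and a divisor $\Lambda_Y$ with $l(K_Y+\Lambda_Y)\sim_C 0$, $(Y,\Lambda_Y)$ log canonical near $0$, $P$ a component of $\Lambda_Y^{=1}$, and $Y_g\cong X_g$. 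Since the MMP steps producing $Y$ do not contract $P$, the divisorial valuation $\mathrm{ord}_P$ is unchanged, so $\mathrm{mult}_P f_Y^*0=\mathrm{mult}_P f^*0$; and because $f_Y^{-1}(0)\subset\Supp\Lambda_Y^{=1}$ near $0$, we get $\mathrm{lct}(Y,\Lambda_Y;f_Y^*0)=0$, i.e. $P$ is already an lc place of $(Y,\Lambda_Y)$. Next, writing $\Delta_Y$ for the strict transform of $\Delta$, the pair $(X_g,\Delta_g)$ is log bounded by Lemma \ref{log bounded} (it is $\epsilon$-lc with $-(K_{X_g}+\Delta_g)$ ample and $\mathrm{coeff}\,\Delta_g\ge c$), hence so is $(Y_g,\Delta_{Y,g})$. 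Log boundedness yields an integer $m$ and an open $U\subset C$ over which $-m(K_{Y_u}+\Delta_{Y,u})$ is very ample and cohomology-free, and a general $N\in|-m(K_Y+\Delta_Y)|_U$ then makes $(Y_g,\Lambda_{Y,g}),N_g$ a $(d,c,v)$-polarised Calabi--Yau pair for a suitable $v$. Applying Theorem \ref{Main theorem, flat morphism}(2) to $f_Y:(Y,\Lambda_Y)\to C$, $N$, and the lc place $P$ gives $t\in\Ct$ and a finite dominant rational map $\Cw_t\dashrightarrow P$ of degree dividing $\min\{l,\mathrm{mult}_P f_Y^*0\,!\}=\min\{l,\mathrm{mult}_P f^*0\,!\}$, which is exactly the conclusion of Theorem \ref{Main theorem, flat morphism}(1).

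\textbf{Step (B): from Theorem \ref{Main theorem, flat morphism} to Theorem \ref{Main theorem, locally stable morphism}.} Suppose $f:(X,\Delta)\to C$ is locally stable, in case (1) or (2) of Theorem \ref{Main theorem, locally stable morphism}. By definition of local stability, for every closed $s\in C$ the fiber $f^*s=X_s$ is reduced and $(X,\Delta+f^*s)$ is semi-log canonical; hence $\mathrm{lct}(X,\Delta;f^*s)=1$ and every irreducible component $P$ of $X_s$ is an lc place of $(X,\Delta+f^*s)=(X,\Delta+\mathrm{lct}(X,\Delta;f^*s)f^*s)$. The hypotheses of Theorem \ref{Main theorem, flat morphism} hold in the corresponding case, and by Step (A) part (2) of that theorem implies the whole theorem, so I may apply it: it produces a bounded family $\Cw\to\Ct$, a point $t\in\Ct$, and a finite dominant rational map $\Cw_t\dashrightarrow P$ of degree dividing $\min\{l,\mathrm{mult}_P f^*s\,!\}=\min\{l,1\}=1$, i.e. a birational map. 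Thus every irreducible component of $X_0$ is birationally bounded.

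\textbf{Main obstacle.} Step (B) is routine. In Step (A) the delicate point is uniformity of the produced Calabi--Yau datum: one must check that $m$, and hence the volume $v=m^d\bigl(-(K_{Y_g}+\Delta_{Y,g})\bigr)^d$ together with the coefficient set of the complement $\Lambda_Y$, are bounded purely in terms of $d,c,\epsilon$ — so that Theorem \ref{Main theorem, flat morphism}(2) is only invoked for finitely many parameter tuples and one takes the union of the resulting bounded families — and that the birational surgery of Lemma \ref{relative complement} neither contracts $P$ nor alters $\mathrm{mult}_P f^*0$. One also has to confirm that the coefficients of $\Lambda_Y$ lie in a set of the form $c'\mathbb{N}$ for some $c'$ depending only on $d,c$, so that $(Y_g,\Lambda_{Y,g}),N_g$ genuinely fits the polarised Calabi--Yau framework of Theorem \ref{Main theorem, flat morphism}(2).
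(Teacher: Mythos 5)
Your proposal follows the paper's proof almost step for step: Step (A) is exactly the paper's reduction via Lemma \ref{relative complement}, Lemma \ref{log bounded}, and the construction of $N$ from a log-bounded family of fibers; Step (B) is the paper's observation that local stability forces $\mathrm{mult}_P f^*s=1$, so the degree bound collapses to $1$ and birational boundedness follows. The two extra checks you make explicit (that the MMP producing $Y$ does not contract $P$ so $\mathrm{mult}_P f_Y^*0=\mathrm{mult}_P f^*0$, and that $\mathrm{lct}(X,\Delta;f^*s)=1$ in the locally stable case) are both implicit in the paper and correct to spell out.

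The uniformity concerns you flag at the end are genuine and are also left implicit in the paper: one must know that $m$, the volume $v$, and a coefficient bound for $\Lambda_Y$ can be chosen depending only on $d,c,\epsilon$, so that Theorem \ref{Main theorem, flat morphism}(2) is invoked with parameters ranging over a finite set. The boundedness of $m$ and $v$ does follow from log boundedness of $(Y_g,\Delta_{Y,g})$ as you indicate; for the coefficient structure, note that Lemma \ref{relative complement} yields $l(K_Y+\Lambda_Y)\sim_C 0$ with $l$ depending only on $d,c$, so $\mathrm{coeff}\,\Lambda_Y\subset\frac{1}{l}\mathbb{N}$, which puts $\Lambda_{Y,g}$ in the required form with $c'=1/l$. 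So your obstacle is resolvable, but it would strengthen the writeup to say so, since the paper leaves it unaddressed.
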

	
	\begin{proof}
		By Lemma \ref{1.3 implies 1.2}, we only need to prove Theorem \ref{Main theorem, flat morphism}(2).
		
		Suppose $(X,\Delta)\rightarrow C$ is a fibration, $N$ is a divisor on $X$ such that
		\begin{itemize}
			\item $K_X+\Delta\sim_{\mathbb{Q},C}0$, and
			\item the general fiber $(X_g,\Delta_g),N_g$ is a $(d,c,v)$-polarised Calabi-Yau pair.
		\end{itemize}
		By Lemma \ref{lc threshold is bounded from below}, there exists $t\in \mathbb{Q}^{\geq 0}$ and $r\in\mathbb{ N}$ such that $(X_g,\Delta_g+tN_g)$ is slc and $r(K_{X_g}+\Delta_g+tN_g)$ is very ample without higher cohomology. Then by cohomology and base change, $r(K_{X_U}+\Delta_U+tN_U)$ is very ample over an open subset $U\subset C$, and it defines a closed embedding $g:X_U\hookrightarrow \mathbb{P}^n_U$. 
		Also because $(X_U,\Delta_U+tN_U)\rightarrow U$ is a stable family, $f_U:(X_U\subset \mathbb{P}^n_U,\Delta_U),N_U\rightarrow U$ is a strongly embedded polarised slc Calabi-Yau family over $U$. Since $\Ce^s\mathcal{PCY}_{\Theta}$ has a fine moduli space with the universal family $(\Cx\subset \mathbb{P}^n_{\Cs},\Cd),\Cn\rightarrow \Cs$, we have $(X_U,\Delta_U)\cong (\Cx,\Cd)\times_\Cs U$, where $U\rightarrow \Cs$ is the moduli map defined by $f_U$.
		
		After passing to a stratification of $\Cs$, we may assume that there is a fibrewise log resolution $\xi:(\Cy,\Cd_\Cy)\rightarrow (\Cx,\Cd)$, where $\Cd_\Cy$ is defined by $K_{\Cy}+\Cd_\Cy=\xi^*(K_{\Cx}+\Cd)$. By Theorem \ref{Semistable reduction}, there is a generically finite morphism $\bar{\Cs}^o\rightarrow \Cs$ and a closure $\bar{\Cs}^o\hookrightarrow \bar{\Cs}$ such that the pull back $(\Cy,\Cd_{\Cy})\times_{\Cs}\bar{\Cs}^o$ extend to a semistable morphism 
		$$\chi:(\bar{\Cy},\bar{\Cd}'_{\bar{\Cy}})\rightarrow (\bar{\Cs},\bar{\Cb}),$$
		where $\bar{\Cb}\supset \bar{\Cs}\setminus \bar{\Cs}^o$ and $\bar{\Cd}'_{\bar{\Cy}}\supset \chi^{-1}\bar{\Cb}\cup (\Cd_{\Cy}\times_{\Cs}\bar{\Cs}^o)$. 
		Let $\bar{\Cd}''_{\bar{\Cy}}$ be the closure of $\Cd_{\Cy}\times_{\Cs}\bar{\Cs}^o$, then $\Supp \bar{\Cd}''_{\bar{\Cy}}\subset \bar{\Cd}'_{\bar{\Cy},h}$, and by the property of weakly semistable morphism, $(\bar{\Cy},\Supp\bar{\Cd}''_{\bar{\Cy}})\rightarrow \bar{\Cs}$ is locally stable. 
		
		It is easy to see that there is an effective $\mathbb{Q}$-divisor $\bar{\Cd}^!_{\bar{\Cy}}$ which is supported on $\chi^{-1}(\bar{\Cs}\setminus \bar{\Cs}^o)$ but does not contain the whole fiber over any generic point of $\bar{\Cs}\setminus \bar{\Cs}^o$, such that $K_{\bar{\Cy}}+\bar{\Cd}''_{\bar{\Cy}}\sim_{\mathbb{Q},\bar{\Cs}} \bar{\Cd}^!_{\bar{\Cy}}$. Define $\bar{\Cd}_{\bar{\Cy}}:=\bar{\Cd}''_{\bar{\Cy}}-\bar{\Cd}^!_{\bar{\Cy}}$, then we have 
		$$K_{\bar{\Cy}}+\bar{\Cd}_{\bar{\Cy}}\sim_{\mathbb{Q},\bar{\Cs}} 0$$
		
		Let $\bar{C}$ be the closure of $U\times _{\Cs}\bar{\Cs}^o$. Then there is a finite morphism $\pi:\bar{C}\rightarrow C$, and we choose $\bar{0}$ to be a closed point that maps to $0$. Because $\bar{\Cs}$ is proper, there is a morphism $\bar{C}\rightarrow \bar{\Cs}$. Define $(\bar{Y},\bar{D}_{\bar{Y}}):=(\bar{\Cy},\bar{\Cd}_{\bar{\Cy}})\times_{\bar{\Cs}}\bar{C}$ and $\bar{D}'_{\bar{Y}}$ be the pull back of $\bar{\Cd}'_{\bar{\Cy}}$ on $\bar{Y}$. Then by Lemma \ref{base change of a toroidal morphism is a toroidal morphism}, $(\bar{Y},\bar{D}'_{\bar{Y}})\rightarrow (\bar{C},\bar{0})$ is a toroidal morphism, hence $(\bar{Y},\bar{D}'_{\bar{Y},h})\rightarrow \bar{C}$ is a locally stable morphism. In particular, $\bar{Y}\setminus \bar{D}'_Y\subset \bar{Y}$ is a toroidal embedding.
		Define $\bar{Y}_{\bar{0}}:=\bar{f}'^*\bar{0}=\mathrm{red}(\bar{f}'^{-1}\bar{0})$, because $(\bar{Y},\bar{D}'_{\bar{Y},h})\rightarrow \bar{C}$ is locally stable and $\bar{D}_{\bar{Y}}\leq \bar{D}'_{\bar{Y},h}$, then $(\bar{Y},\bar{D}_{\bar{Y}}+\bar{Y}_{\bar{0}})$ is sub-log canonical and it is easy to see that $$K_{\bar{Y}}+\bar{D}_{\bar{Y}}+\bar{Y}_{\bar{0}}\sim_{\mathbb{Q},\bar{C}}0.$$
		
		Let $\bar{X}$ be the normalization of $X\times_C \bar{C}$ and denote the natural morphism $\bar{X}\rightarrow \bar{C}$ by $\pi_X$. Write $\alpha:=\mathrm{lct}(X,\Delta;f^*0)$. By the Hurwitz's formula, there is a $\mathbb{Q}$-divisor $\bar{\Delta}_\alpha$ such that
		$$K_{\bar{X}}+\bar{\Delta}_\alpha\sim_{\mathbb{Q}}\pi_X^*(K_{X}+\Delta+\alpha f^*0).$$
		Suppose $P$ is a log canonical place of $(X,\Delta+\alpha f^*0)$. Let $X'\rightarrow X$ be a dlt modification of $(X,\Delta+\alpha f^*0)$ such that $P$ is a divisor on $X'$, $\bar{X}'$ be the normalization of $X'\times_C\bar{C}$, and $\bar{P}$ be an irreducible component of the preimage of $P$ on $\bar{X}'$. By \cite[2.41]{Kol13}, $\bar{P}$ is a log canonical center of $(\bar{X},\bar{\Delta}_\alpha)$. And by Lemma \ref{mulplcity of ramified cover}, the degree of the finite morphism $\bar{P}\rightarrow P$ is a factor of $\mathrm{min}\{\mathrm{deg}(\pi)!,\mathrm{mult}_Pf^*0\}$. Define $l$ to be the factorial of the degree of the finite morphism $\bar{\Cs}\rightarrow \Cs$, which is clearly greater or equal to $\mathrm{deg}(\pi)!$, then $\mathrm{min}\{\mathrm{deg}(\pi)!,\mathrm{mult}_Pf^*0\}$ is a factor of $\mathrm{min}\{l,\mathrm{mult}_Pf^*0\}$. Thus we only need to prove that $\bar{P}$ is birationally bounded.
		
		Because $K_{\bar{X}}+\bar{\Delta}_\alpha\sim_{\mathbb{Q},\bar{C}}0,K_{\bar{Y}}+\bar{D}_{\bar{Y}}+\bar{Y}_{\bar{0}}\sim_{\mathbb{Q},\bar{C}}0$, the general fiber of $(\bar{X},\bar{\Delta}_\alpha)\rightarrow \bar{C}$ is crepant birationally equivalent to the general fiber of $(\bar{Y},\bar{D}_{\bar{Y}}+\bar{Y}_{\bar{0}})\rightarrow \bar{C}$,
		and both $(\bar{X},\bar{\Delta}_\alpha)$ and $(\bar{Y},\bar{D}_{\bar{Y}}+\bar{Y}_{\bar{0}})$ have a log canonical place that dominates $\bar{0}$, $(\bar{X},\bar{\Delta}_\alpha)$ is crepant birationally equivalent to $(\bar{Y},\bar{D}_{\bar{Y}}+\bar{Y}_{\bar{0}})$. In particular, a divisor $\bar{P}$ is a log canonical place of $(\bar{X},\bar{\Delta}_\alpha)$ if and only if it is a log canonical place of $(\bar{Y},\bar{D}_{\bar{Y}}+\bar{Y}_{\bar{0}})$.
		
		Recall that $\bar{Y}\setminus \bar{D}'_Y\subset \bar{Y}$ is a toroidal embedding and $\Supp(\bar{D}_{\bar{Y}}+\bar{Y}_{\bar{0}})\leq \bar{D}'_Y$, by Lemma \ref{log canonical center is projective bundle}, $\bar{P}$ is birationally equivalent to $\bar{V}\times \mathbb{P}^r$, where $\bar{V}$ is the image of $\bar{P}$ on $\bar{Y}$. It is easy to see that $\bar{V}$ is a log canonical center of $(\bar{Y},\bar{D}_{\bar{Y}}+\bar{Y}_{\bar{0}})$. By applying Lemma \ref{toric singularities} to a local model of $(\bar{Y}\setminus \Supp \bar{D}'_{\bar{Y}}\subset \bar{Y})$ near a general point of $\bar{V}$, we can see that there is an irreducible component $\bar{Y}_{\bar{0}}^1$ of $(\bar{D}_{\bar{Y}}+\bar{Y}_{\bar{0}})^{=1}$, which maps to $\bar{0}$, such that $\bar{V}$ is a subvariety of $\bar{Y}_{\bar{0}}^1$. 
		
		To prove $\bar{P}$ is birationally bounded, we only need to prove all log canonical centers of $(\bar{Y},\bar{D}_{\bar{Y}}+\bar{Y}_{\bar{0}})$ are in a bounded family.
		
		If $\bar{P}$ has codimension 1 in $\bar{Y}$, then $\bar{P}$ is just $\bar{Y}^1_{\bar{0}}$. Since $\bar{Y}_{\bar{0}}$ is in a bounded family $\bar{\Cy}\rightarrow \bar{\Cs}$ and $\bar{Y}^1_{\bar{0}}$ is an irreducible component of $\bar{Y}_{\bar{0}}$, $P$ is birationally bounded. 
		
		If $\bar{P}$ has codimension $\geq 2$ in $\bar{Y}$. Apply adjunction on $(\bar{Y},\bar{D}_{\bar{Y}}+\bar{Y}_{\bar{0}})$, we have
		$$(K_{\bar{Y}}+\bar{D}_{\bar{Y}}+\bar{Y}_{\bar{0}})|_{\bar{Y}^1_{\bar{0}}}=K_{\bar{Y}^1_{\bar{0}}}+\bar{D}^1_{\bar{Y},\bar{0}}.$$
		By inverse of adjunction, a log canonical center of $(\bar{Y},\bar{D}_{\bar{Y}}+\bar{Y}_{\bar{0}})$ intersecting $\bar{Y}^1_{\bar{0}}$ corresponds to a log canonical center of $(\bar{Y}^1_{\bar{0}},\bar{D}^1_{\bar{Y},\bar{0}})$, hence also a log canonical center of $(\bar{Y}_{\bar{0}},\bar{D}'_{\bar{Y},\bar{0}})$. Let $s\in\bar{\Cs}$ be the image of $\bar{C}$ in $\bar{\Cs}$, by the definition of $\bar{Y}$, we have the isomorphism 
		$$(\bar{Y}_{\bar{0}},\bar{D}'_{\bar{Y},\bar{0}})\cong (\bar{\Cy}_s,\bar{\Cd}'_{\bar{\Cy}_s}).$$
		By Lemma \ref{lc centers form a bounded family}, the log canonical centers of $(\bar{\Cy}_s,\bar{\Cd}'_{\bar{\Cy}_s})$ is in a bounded family, then all log canonical centers of $(\bar{Y},\bar{D}_{\bar{Y}}+\bar{Y}_{\bar{0}})$ is in a bounded family.

	\end{proof}
	
	\begin{rem}
		In Theorem \ref{Main theorem, flat morphism}(1), if we further assume $(X,\Delta)$ to be klt, we will show that the condition $-(K_X+\Delta)$ being ample over $C$ can be replaced by $-(K_X+\Delta)$ being nef and big over $C$. 
		
		Assume $-(K_X+\Delta)$ is nef and big over $C$, because $(X,\Delta)$ is klt, by \cite[Corollary 1.3.2]{BCHM10}, $-(K_X+\Delta)$ is semiample, hence defines a contraction $h:X\rightarrow Y$. Let $\Delta_Y:=h_*\Delta$, then we have
		\begin{itemize}
			\item $K_X+\Delta\sim_{\mathbb{Q},c}h^*(K_Y+\Delta_Y)$,
			\item the general fiber $(Y_g,\Delta_{Y_g})$ is $\epsilon$-lc,
			\item $-(K_Y+\Delta_Y)$ is ample over $C$, and
			\item $\mathrm{lct}(X,\Delta;f^*0)=\mathrm{lct}(Y,\Delta_Y;f_Y^*0)$, where $f_Y$ is the natural morphism $Y\rightarrow C$.
		\end{itemize}
		It is easy to see that $(X,\Delta+\mathrm{lct}(X,\Delta;f^*0)f^*0)$ is crepant birationally equivalent to $(Y,\Delta_Y+\mathrm{lct}(Y,\Delta_Y;f_Y^*0)f_Y^*0)$, and a divisor $P$ is a log canonical place of $(X,\Delta+\mathrm{lct}(X,\Delta;f^*0)f^*0)$ if and only if it is a log canonical place of $(Y,\Delta_Y+\mathrm{lct}(Y,\Delta_Y;f_Y^*0)f_Y^*0)$. Then we can replace $(X,\Delta)$ by $(Y,\Delta_Y)$ and the result follows.

		In Theorem \ref{Main theorem, locally stable morphism}(1), for the locally stable morphism $f:(X,\Delta)\rightarrow C$ over a smooth curve, because the general fiber $(X_g,\Delta_g)$ is klt, by \cite[Proposition 2.14]{Kol23}, $(X,\Delta)$ is a klt pair. Then for the same reason, the condition $-(K_X+\Delta)$ being ample over $C$ can be replaced by $-(K_X+\Delta)$ being nef and big over $C$. 
		
	\end{rem}
	
	\section{Boundedness of the boundary}
	
	In this section, we prove the following technical result. We think it is useful in the study of singularities of Fano fibrations.
	
	\begin{thm}\label{crepant birational boundedness}
		Fix a natural number $d$ and positive rational number $c,\epsilon$. Then there exist a natural number $l$ and a bounded family of log pairs $(\Cw,\Cd)\rightarrow \Ct$, such that:
		
		Let $X$ be a normal quasi-projective variety, $(X,\Delta)$ a log canonical pair, and $f:X\rightarrow C$ a fibration of relative dimension $d$ over a smooth curve $C$. Suppose
		\begin{itemize}
			\item $\mathrm{coeff}\Delta\subset c\mathbb{N}$,
			\item $-(K_X+\Delta)$ is ample over $C$, and
			\item the general fiber $(X_g,\Delta_g)$ is $\epsilon$-lc.
		\end{itemize}
		Then for any closed point $0\in C$ and any divisor $P$ over $X$ which is a log canonical place of $(X,\Delta+\mathrm{lct}(X,\Delta;f^*0)f^*0)$, there is a diagram
		$$\xymatrix{
			\bar{X} \ar[r]^{\pi_X} \ar[d]_{\bar{f}}& X \ar[d]^f\\
			\bar{C} \ar[r]_\pi & C
		}$$
		and a $\mathbb{Q}$-divisor $\bar{\Lambda}$ on $\bar{X}$ with the following properties:
		\begin{itemize}
			\item $l(K_{\bar{X}}+\bar{\Lambda})\sim_{\bar{C}}0$.
			\item $\bar{C}\rightarrow C$ is finite of degree $\leq l$.
			\item the induced morphism $\bar{X}\rightarrow X\times _C \bar{C}$ is birational.
			\item $P$ is dominated by a divisor $\bar{P}$ on $\bar{X}$, that is, if $X'\rightarrow X$ is a birational map and extracts $P$, then $P$ is dominated by $\bar{P}$ via the natural rational map $\bar{X}\dashrightarrow X'$.
			\item $\bar{P}$ is an irreducible component of $\bar{\Lambda}^{=1}$.
			\item $(\bar{X},\bar{\Lambda})$ is sub-log canonical in a neighborhood of $\bar{P}$.
			\item there is a closed point $t\in \Ct$, such that $(\Cw_t,\Cd_t)$ is crepant birationally equivalent to $(\bar{P},\mathrm{Diff}_{\bar{P}}(\bar{\Lambda}-\bar{P}))$.
		\end{itemize}
	\end{thm}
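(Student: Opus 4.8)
The plan is to run the construction in the proof of Theorem~\ref{Main theorem, flat morphism}(2) for the family of log Calabi-Yau pairs manufactured from the given Fano fibration, while keeping track of the crepant structure of the log canonical place. First I pass to a relative complement: since $(X_g,\Delta_g)$ is $\epsilon$-lc Fano, hence klt, Lemma~\ref{relative complement} gives $l_0=l_0(d,c)$, a birational map $Y\dashrightarrow X$ over $C$, and a $\mathbb{Q}$-divisor $\Lambda_Y$ with $l_0(K_Y+\Lambda_Y)\sim_C 0$, $(Y,\Lambda_Y)$ log canonical near $0$, the birational transform of $P$ a component of $\Lambda_Y^{=1}$, and $Y_g\cong X_g$; by Theorem~\ref{Birkar:log Calabi-Yau fibrations Theorem 1.10} the coefficients of $\Lambda_Y$ lie in a finite set $\mathcal{C}_0$ depending only on $d,c$. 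By Lemma~\ref{log bounded}, $(X_g,\Delta_g)\cong(Y_g,\Delta_{Y,g})$ is log bounded, so for a bounded $m$ and a general $N\in|-m(K_Y+\Delta_Y)/U|_{\mathbb{Q}}$ over a general open $U\subseteq C$ the fiber $(Y_g,\Lambda_{Y,g}),N_g$ is a $(d,c',v')$-polarised log Calabi-Yau pair with $c'=1/l_0$ and $v'=m^d(-(K_{Y_g}+\Delta_{Y,g}))^d$ in a finite set depending only on $d,c,\epsilon$; shrinking $U$ and invoking Lemma~\ref{lc threshold is bounded from below}, $f$ restricts over $U$ to a strongly embedded $\Theta'$-polarised slc Calabi-Yau family and factors through the fine moduli space $\Cs:=E^sPCY_{\Theta'}$, with $(Y_U,\Lambda_{Y,U})\cong(\Cx,\Cd)\times_{\Cs}U$; only finitely many $\Cs$, depending on $d,c,\epsilon$, occur.

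Now I copy the semistable-reduction part of the proof of Theorem~\ref{Main theorem, flat morphism}: after stratifying $\Cs$ there is a fiberwise log resolution $\xi:(\Cy,\Cd_{\Cy})\to(\Cx,\Cd)$; Theorem~\ref{Semistable reduction} produces a generically finite $\bar{\Cs}^o\to\Cs$, a compactification $\bar{\Cs}^o\hookrightarrow\bar{\Cs}$ and a semistable $\chi:(\bar{\Cy},\bar{\Cd}'_{\bar{\Cy}})\to(\bar{\Cs},\bar{\Cb})$ extending $(\Cy,\Cd_{\Cy})\times_{\Cs}\bar{\Cs}^o$; and one forms $\bar{\Cd}_{\bar{\Cy}}$ with $K_{\bar{\Cy}}+\bar{\Cd}_{\bar{\Cy}}\sim_{\mathbb{Q},\bar{\Cs}}0$. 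Let $\bar{C}$ be the closure of $U\times_{\Cs}\bar{\Cs}^o$, with $\pi:\bar{C}\to C$ finite, $\bar{0}\mapsto 0$, and $\bar{C}\to\bar{\Cs}$ the induced morphism; write $\bar{Y}:=\bar{\Cy}\times_{\bar{\Cs}}\bar{C}$, let $\bar{X}$ be a toroidal modification of $\bar{Y}$ which dominates $X\times_C\bar{C}$ and on which the birational transform $\bar{P}$ of $P$ is a divisor, let $\bar{D}'_{\bar{X}}$ be its reduced toroidal boundary (the transform of the pullback of $\bar{\Cd}'_{\bar{\Cy}}$ and of $\bar{X}_{\bar{0}}:=\mathrm{red}(\bar{f}^{-1}\bar{0})$), and let $\bar{\Lambda}$ be the crepant pullback to $\bar{X}$ of (the pullback of $\bar{\Cd}_{\bar{\Cy}}$ to $\bar{Y}$) $+\,\bar{Y}_{\bar{0}}$. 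By Lemma~\ref{base change of a toroidal morphism is a toroidal morphism}, $(\bar{X},\bar{D}'_{\bar{X}})\to(\bar{C},\bar{0})$ is an equidimensional reduced-fiber toroidal morphism; one checks $\bar{\Lambda}\le\bar{D}'_{\bar{X}}$, that $(\bar{X},\bar{\Lambda})$ is sub-log canonical, and --- after enlarging $l$ by $(\deg(\bar{\Cs}\to\Cs))!$ and by the (bounded) indices of $K_{\bar{\Cy}}+\bar{\Cd}_{\bar{\Cy}}\sim_{\mathbb{Q},\bar{\Cs}}0$, of $K_{\bar{\Cs}}$ and of $\bar{C}\to\bar{\Cs}$ --- that $\bar{C}\to C$ has degree $\le l$ and $l(K_{\bar{X}}+\bar{\Lambda})\sim_{\bar{C}}0$. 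As in the main proof, $(\bar{X},\bar{\Lambda})\to\bar{C}$ is fiberwise crepant birationally equivalent to $(Y,\Lambda_Y)\times_C\bar{C}\to\bar{C}$, both carrying an lc place over $\bar{0}$; hence $\bar{P}$ is a component of $\bar{\Lambda}^{=1}$, $(\bar{X},\bar{\Lambda})$ is sub-log canonical near $\bar{P}$, and $\bar{P}$ dominates $P$. All the listed properties now hold except the last.

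That last property --- crepant boundedness of $(\bar{P},\mathrm{Diff}_{\bar{P}}(\bar{\Lambda}-\bar{P}))$ --- is the point. By adjunction this is a sub-log Calabi-Yau pair, and from $\bar{\Lambda}\le\bar{D}'_{\bar{X}}$ its boundary equals the reduced toric boundary of $\bar{P}$ minus an effective divisor, so has coefficients in a finite set $\mathcal{C}_1$ depending only on $\mathcal{C}_0$ and the bounded toric data of $\bar{\Cy}\to\bar{\Cs}$. Let $\bar{V}$ be the center of $\bar{P}$ on $\bar{Y}$. As in the proof of Theorem~\ref{Main theorem, flat morphism} (via Lemma~\ref{toric singularities} on a local model and inversion of adjunction along a component of $\bar{\Lambda}^{=1}$ dominating $\bar{0}$), $\bar{V}$ is an lc center of $(\bar{Y}_{\bar{0}},\bar{D}'_{\bar{Y},\bar{0}})\cong(\bar{\Cy}_s,\bar{\Cd}'_{\bar{\Cy}_s})$, so by the proof of Lemma~\ref{lc centers form a bounded family} it is the fiber over $s$ of one of finitely many smooth stratum families $\Cv_i\to\bar{\Cs}$, equipped with the divisor $D_{\bar{V}}=\Cd_{\Cv_i}|_s$, where $\Cd_{\Cv_i}$ is the (clean, hence fiberwise) iterated different of $(\bar{\Cy},\bar{\Cd}'_{\bar{\Cy}})$ along $\Cv_i$. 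By Lemma~\ref{log canonical center is projective bundle} applied to the toroidal pair $(\bar{Y},\bar{D}'_{\bar{Y}})$ and the place $\bar{P}$, the divisor $\bar{P}$ is birational to $\bar{V}\times\mathbb{P}^r$, $r=d-\dim\bar{V}\le d+1$, compatibly with the toroidal structure; restricting $K_{\bar{P}}+\mathrm{Diff}_{\bar{P}}(\bar{\Lambda}-\bar{P})\sim_{\mathbb{Q}}0$ to a general $\mathbb{P}^r$-fiber forces the fiberwise part of the boundary to be the full reduced hyperplane arrangement $H_r$, so $(\bar{P},\mathrm{Diff}_{\bar{P}}(\bar{\Lambda}-\bar{P}))$ is crepant birationally equivalent to $(\bar{V},B_{\bar{V}})\times(\mathbb{P}^r,H_r)$ with $B_{\bar{V}}\le D_{\bar{V}}$ of coefficients in $\mathcal{C}_1$ (the case $r=0$, where $\bar{P}$ is itself a boundary or special-fiber component of $\bar{Y}$, is subsumed). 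Since $B_{\bar{V}}$ is then one of finitely many sub-boundaries of $\Cd_{\Cv_i}$, letting $(\Cw,\Cd)\to\Ct$ be the disjoint union, over $i$, over $0\le r\le d+1$ and over the finitely many admissible $B\le\Cd_{\Cv_i}$, of the families of pairs $(\Cv_i,B)\times(\mathbb{P}^r,H_r)\to\bar{\Cs}$ produces a bounded family of log pairs with the required point $t$.

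I expect this third step to be the main obstacle: it amounts to upgrading Lemma~\ref{lc centers form a bounded family} from boundedness of log canonical centers as varieties to boundedness, up to crepant equivalence, of the induced sub-log Calabi-Yau pairs, together with the bookkeeping matching $\mathrm{Diff}_{\bar{P}}(\bar{\Lambda}-\bar{P})$ with a product boundary through the $\mathbb{P}^r$-bundle structure of Lemma~\ref{log canonical center is projective bundle}. The delicate inputs are: that adjunction is clean along the smooth strata of $(\bar{\Cy},\bar{\Cd}'_{\bar{\Cy}})\to\bar{\Cs}$, so the differents spread out over the stratum families with no new denominators; that the bundle identification in Lemma~\ref{log canonical center is projective bundle} is genuinely toroidal, so that $\mathrm{Diff}_{\bar{P}}(\bar{\Lambda}-\bar{P})$ is supported on the toric boundary of $\bar{P}$ and $H_r$ appears with coefficient $1$; and that every index entering the argument (in $\Lambda_Y$, in $K_{\bar{\Cy}}+\bar{\Cd}_{\bar{\Cy}}\sim_{\mathbb{Q},\bar{\Cs}}0$, in the semistable reduction and in the base change) is bounded in terms of $d,c,\epsilon$, so that one $l$ suffices. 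The first two steps, by contrast, are routine given Lemmas~\ref{relative complement}, \ref{log bounded} and \ref{lc threshold is bounded from below} and the machinery already set up for Theorem~\ref{Main theorem, flat morphism}.
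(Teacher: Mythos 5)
Your first two steps follow the paper's own route (Lemma \ref{relative complement} to pass to a relative complement, then the moduli/semistable-reduction construction from the proof of Theorem \ref{Main theorem, flat morphism}), and apart from minor bookkeeping they are fine. The genuine gap is exactly the step you yourself flag as the main obstacle: the claim that $(\bar{P},\mathrm{Diff}_{\bar{P}}(\bar{\Lambda}-\bar{P}))$ is crepant birationally equivalent to a product $(\bar{V},B_{\bar{V}})\times(\mathbb{P}^r,D_{\Delta_r})$ is asserted rather than proved. Lemma \ref{log canonical center is projective bundle} only identifies $\bar{P}$ with $\bar{V}\times\mathbb{P}^r$ birationally \emph{as varieties}; upgrading this to a statement about pairs requires (i) showing on a toric local model that every horizontal component of the different along an lc place is torus-invariant and has coefficient exactly $1$, and (ii) showing that the induced fibration of pairs is \emph{generically trivial}, i.e.\ isomorphic over an open subset of the base to $(\mathbb{P}^r,D_{\Delta_r})\times U$, not merely that a general fibre is the right pair. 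The paper devotes Lemmas \ref{toric singularities}, \ref{toric blow up is generically trivial} and \ref{toroidal blow up is generally trivial} to precisely these points; the triviality in particular needs the explicit further subdivision comparing the relative star with the standard simplex and the trivialization of $\pi_*L$ by the boundary sections. Your degree count on a general $\mathbb{P}^r$-fibre can recover the coefficient-one statement fibrewise, but it does not give triviality over an open set of the base, which is what the crepant product decomposition (and hence the whole last bullet of the theorem) rests on.

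There is a second, related gap in how you bound the base pair. You propose to spread out ``iterated differents'' along the stratum families $\Cv_i\rightarrow\bar{\Cs}$ and to claim that $B_{\bar{V}}$ is one of finitely many admissible sub-boundaries with coefficients in a finite set; neither finiteness claim is justified, and it is essentially the thing that needs proof: a priori the boundary produced by the canonical bundle formula on the base of $\bar{P}\rightarrow\bar{V}$ need not be the restriction of one of finitely many divisors on a stratum family. The paper sidesteps this entirely: since $K_{\bar{X}}+\bar{\Lambda}$ is the crepant pullback of $K_{\bar{Y}}+\bar{D}_{\bar{Y}}+\bar{Y}_{\bar{0}}$, the centre $Z^n=\bar{V}$ is an lc centre of $(\bar{\Cy}_s,\bar{\Cd}'_{\bar{\Cy}_s})$, and Kawamata subadjunction identifies $K_{Z^n}+D_{Z^n}$ with $(K_{\bar{\Cy}_s}+\bar{\Cd}_{\bar{\Cy}_s})|_{Z^n}$, so log boundedness of $(Z^n,D_{Z^n})$ follows directly from boundedness of the ambient family (Lemma \ref{lc centers form a bounded family}) with no coefficient bookkeeping. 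So while your overall architecture agrees with the paper's, the crepant-triviality lemmas and the subadjunction argument --- which are the actual new content of Theorem \ref{crepant birational boundedness} beyond Theorem \ref{Main theorem, flat morphism} --- are missing from the proposal.
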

	
	To prove this theorem, we need to generalize some results in section 3 to the log pair case. First, we consider a toric variety with a boundary.
	
	\begin{lemma}\label{toric singularities}
		
		Let $(X_\Sigma,D=\sum_{i}d_iD_i)$ be a sub-pair where $X_\Sigma$ is a normal toric variety and $D_i$ is torus-invariant. It is well known that if $d_i\leq 1$ for all $i$, then $(X_\Sigma, D)$ is sub-log canonical. 
		
		Suppose $\psi:\Sigma'\rightarrow \Sigma$ is a refinement and $\tilde{\psi}:X_{\Sigma'}\rightarrow X_{\Sigma}$ be the induced birational morphism. Let $\tau'\in \Sigma'$ be a 1-dimensional cone and $\sigma\in \Sigma$ be the cone such that $\tau'\in \Sigma'_\sigma$. Suppose $\{\tau_1,...,\tau_k\}$ is the set of 1-dimensional faces of $\sigma$ and $B_1,...,B_k$ are the corresponding divisors on $X$. Let $P$ denote the divisor corresponding to $\tau' $ over $X$, define $D'$ by $K_{X_\Sigma'}+P+D'\sim_{\mathbb{Q}}\tilde{\psi}^*(K_{X_\Sigma}+D)$. 
		
		If $P$ is a log canonical place of $(X_\Sigma,D)$, then $\mathrm{coeff}_{B_i}D=1$ for $1\leq i\leq k$, and for every $\tilde{\psi}|_{P}$-horizontal prime divisor $D_P$ of $\Supp\mathrm{Diff}_{P}(D')$, we have $\mathrm{coeff}_{D_P}(\mathrm{Diff}_{P}(D'))=1$. 
		
		\begin{proof}
			Let $\{B_1,...,B_j\}$ be the set of linearly independent 1-dimensional cones in $\Sigma'_{\sigma}$ that contains $\tau'$ in their closure. Then by the proof of \cite[Proposition 11.4.24]{CLS11}, $\mathrm{coeff}_{B_i}D=1$ for $1\leq i\leq j$. Because $K_{X_\Sigma}+D\sim_{\mathbb{Q}}(d_i-1)D_i$ is $\mathbb{Q}$-Cartier, then there exists $m_\tau\in M$ such that $<m_\sigma,\tau_i>=d_i-1$ for all $1\leq i\leq l$. Since $\tau'$ is primitive, $\{B_1,...,B_j\}$ spans $Span_{\mathbb{R}}\sigma$. Because $<m_\sigma,\tau_i>=0$ for all $1\leq i\leq j$, $\mathrm{coeff}_{B_i}D=1$ for all $1\leq i\leq k$.
			
			Because $P$ is an orbit closure, then $P$ is normal and $\mathrm{Diff}_{P}(D')$ is well defined. By Theorem \ref{fibration of toric morphism}, the general fiber $P_g$ of $\tilde{\psi}|_{P}:P\rightarrow \tilde{\psi}(P)$ is the toric variety associated to the relative star $\mathrm{Star}_\sigma(\tau')$. Let $\{\tau'_1,...,\tau'_j\}\subset \Sigma'_{\sigma}$ be the 1-dimensional cones whose corresponding divisors intersect $P$, then $\{\tau'_1,...,\tau'_j\}$ corresponds to the 1-dimensional cones in $\mathrm{Star}_\sigma(\tau')$ whose corresponding divisors intersect $P_g$. Because $P_g$ is toric, its singular locus is torus invariant. By adjunction, $\mathrm{Diff}_{P}(D')|_{P_g}$ is supported on the union of $\Supp D'\cap P_g$ and the singular locus of $P_g$, then $\Supp\mathrm{Diff}_{P}(D')|_{P_g}$ is torus invariant.
			
			By adjunction, $(P_g,\mathrm{Diff}_{P}(D')|_{P_g})$ is sub-log canonical and $K_{P_g}+\mathrm{Diff}_{P}(D')|_{P_g}\sim_{\mathbb{Q}}0$. Because $P_g$ is a general fiber, by the property of toric variety, we have $\mathrm{coeff}_{D_P}(\mathrm{Diff}_{P}(D'))=1$ for every $\tilde{\psi}|_{P}$-horizontal prime divisor $D_P$ of $\Supp\mathrm{Diff}_{P}(D')$.
		\end{proof}
	\end{lemma}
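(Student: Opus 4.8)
The plan is to do everything combinatorially on the fans and reduce the second assertion to a standard fact about complete toric varieties. Write $u$ for the primitive generator of $\tau'$ and $u_1,\dots,u_k$ for the primitive generators of the rays $\tau_1,\dots,\tau_k$ of $\sigma$. Since $(X_\Sigma,D)$ is a sub-pair, $K_{X_\Sigma}+D=\sum(d_i-1)D_i$ is $\mathbb{Q}$-Cartier, so on the chart $U_\sigma$ it equals $\mathrm{div}(\chi^{m_\sigma})$ for some $m_\sigma\in M_{\mathbb{Q}}$ with $\langle m_\sigma,u_i\rangle=d_i-1$ for $1\le i\le k$. Pulling back by $\tilde\psi$ and using $K_{X_{\Sigma'}}=-\sum_\rho D_\rho$, this same $m_\sigma$ computes the log discrepancy over $U_\sigma$ of any toric prime divisor $D_\rho$ (with generator $u_\rho$) as $-\langle m_\sigma,u_\rho\rangle$; in particular the log discrepancy of $P$ is $-\langle m_\sigma,u\rangle$, so $P$ being a log canonical place means $\langle m_\sigma,u\rangle=0$. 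Since $\tau'\in\Sigma'_\sigma$, the vector $u$ lies in the relative interior of $\sigma$, hence $u=\sum a_iu_i$ with all $a_i>0$ (subtract a small positive multiple of $u_1+\dots+u_k$ and use that $u$ is interior). Then $0=\langle m_\sigma,u\rangle=\sum a_i(d_i-1)$ is a sum of non-positive terms, which forces $d_i=1$ for every $i$, i.e.\ $\mathrm{coeff}_{B_i}D=1$.

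For the second assertion I would restrict to a general fibre of $\tilde\psi|_P$. Since $P$ is an orbit closure it is a normal toric variety, so $\mathrm{Diff}_P(D')$ is defined; as $(X_\Sigma,D)$ is sub-log canonical and $K_{X_{\Sigma'}}+P+D'=\tilde\psi^*(K_{X_\Sigma}+D)$ is its crepant pullback, $(X_{\Sigma'},P+D')$ is sub-log canonical, hence so is $(P,\mathrm{Diff}_P(D'))$ by adjunction, and $K_P+\mathrm{Diff}_P(D')=(\tilde\psi|_P)^*\bigl((K_{X_\Sigma}+D)|_{\tilde\psi(P)}\bigr)$ is pulled back from $\tilde\psi(P)$. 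By Theorem \ref{fibration of toric morphism}, a general fibre $P_g$ of $\tilde\psi|_P$ is the toric variety of the relative star $\mathrm{Star}_\sigma(\tau')$, and it is complete because $\tilde\psi$ is proper. Restricting, $(P_g,\Gamma)$ is sub-log canonical with $K_{P_g}+\Gamma\sim_{\mathbb{Q}}0$, where $\Gamma:=\mathrm{Diff}_P(D')|_{P_g}$ is torus-invariant, its support being contained in $\Supp D'\cap P_g$ together with the singular locus of $P_g$, both torus-invariant.

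Finally I would invoke the elementary fact that on a complete toric variety $V$ a torus-invariant $\mathbb{Q}$-divisor $\Gamma$ with $(V,\Gamma)$ sub-log canonical and $K_V+\Gamma\sim_{\mathbb{Q}}0$ must equal the reduced toric boundary: indeed $\Gamma+K_V\sim_{\mathbb{Q}}0$ is a principal torus-invariant divisor $\mathrm{div}(\chi^m)$, so each coefficient of $\Gamma$ is $1+\langle m,u_\rho\rangle$, sub-log-canonicity gives $\langle m,u_\rho\rangle\le 0$ for every ray of $V$, and completeness (the rays positively span $N_{\mathbb{R}}$) forces $m=0$, whence every coefficient equals $1$. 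Applying this to $(P_g,\Gamma)$, and noting that a $\tilde\psi|_P$-horizontal prime component $D_P$ of $\mathrm{Diff}_P(D')$ restricts on $P_g$ to (a sum of) torus-invariant prime divisors carrying the same coefficient, one gets $\mathrm{coeff}_{D_P}\mathrm{Diff}_P(D')=1$. I expect the main obstacle to be precisely this reduction to $P_g$: one must use the explicit fibre description in Theorem \ref{fibration of toric morphism} to know $P_g$ is a complete toric variety, and to check that forming the different commutes with restriction to the general fibre — once that is in place, the rest is pure toric bookkeeping.
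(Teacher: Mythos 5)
Your proposal is correct and takes essentially the same route as the paper: part (a) is the toric log-discrepancy computation via the local supporting functional $m_\sigma$ (your positive-combination argument for a ray in the relative interior of $\sigma$ is a cleaner, self-contained version of the paper's spanning argument through \cite{CLS11}), and part (b) restricts the different to a general fibre of $\tilde{\psi}|_P$, identified via Theorem \ref{fibration of toric morphism} with the complete toric variety of the relative star, and finishes with the standard fact that a torus-invariant sub-lc boundary $\Gamma$ with $K+\Gamma\sim_{\mathbb{Q}}0$ on a complete toric variety must be the full reduced boundary --- exactly the ``property of toric variety'' the paper invokes, and your product structure over $O_\sigma$ is the right justification for the restriction/adjunction step that the paper also only asserts. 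Note only that your ``sum of non-positive terms'' step uses $d_i\le 1$, i.e.\ the standing sub-log canonicity of $(X_\Sigma,D)$, which is indeed the intended hypothesis behind ``$P$ is a log canonical place.''
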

	
	\begin{lemma}\label{toric blow up is generically trivial}
		With the same notation as in Lemma \ref{toric singularities}. Suppose $P$ is a log canonical place of $(X_\Sigma,D)$, then there is an open subset $U\subset Z:=\tilde{\psi}(P)$ such that $(P,\mathrm{Diff}_{P}D')\times _{Z}U$ is crepant birationally equivalent to $(\mathbb{P}^r,D_{\Delta_r})\times U$, where $\mathbb{P}^r$ is the projective space of dimension $r:=\mathrm{dim}P-\mathrm{dim}Z$ also as the toric variety associated to the standard simplex $\Delta_r$, $D_{\Delta_r}$ is the corresponding toric boundary.
		\begin{proof}
			Recall that by Theorem 3.1, the general fiber of $f|_P:P\rightarrow Z$ is the toric variety corresponding to the relative star $\mathrm{Star}_{\sigma}(\tau')$.
			
			Let $\{\tau'_1,...,\tau'_{r}\}$ be a base of $\psi^{-1}((N_\sigma))/(N_{\tau'})$ over $\mathbb{Z}$, define $\tau'_{r+1}:=-(\tau'_1+...+\tau'_{r})$. Let $\tau_1,...,\tau_{r+1}$ be lifts of $\tau'_1,...,\tau'_{r+1}$ located in the cone $\sigma$ and $\Sigma''$ be a subdivision of $\Sigma'$ by adding the 1-dimensional cones $\tau_1,...,\tau_{r+1}$. Then there is a morphism $f:X_{\Sigma''}\xrightarrow{g} X_{\Sigma'}\xrightarrow{\tilde{\psi}}  X_{\Sigma}$. Let $\tau''$ be the preimage of $\tau'$ in $\Sigma''$, by construction, the relative star $\mathrm{Star}_{\sigma}(\tau'')$ contains the fans generated by $\{\tau'_1,...,\tau'_{r+1}\}$. By construction, the fans generated by $\{\tau'_1,...,\tau'_{r}\}$ is the standard $r$-complex $\Delta_{r}$, whose corresponding toric variety is $\mathbb{P}^r$. Then $\mathrm{Star}_{\sigma}(\tau'')$ is both a subdivision of $\Delta_n$ and $\mathrm{Star}_{\sigma}(\tau')$, and there is two natural birational morphisms $X_{\mathrm{Star}_{\sigma}(\tau'')}\rightarrow X_{\mathrm{Star}_{\sigma}(\tau')}$ and $X_{\mathrm{Star}_{\sigma}(\tau'')}\rightarrow \mathbb{P}^r$.
			
			We denote the strict transform of $P$ on $X_{\Sigma''}$ still by $P$. Write $K_{X_{\Sigma''}}+P+D''\sim_{\mathbb{Q}}f^*(K_{X_{\Sigma}}+D)$. By adjunction and Lemma \ref{toric singularities}, the general fiber of $(P,\mathrm{Diff}_{P}D'') \rightarrow Z$ is $(X_{\mathrm{Star}_{\sigma}(\tau'')},D_{\mathrm{Star}_{\sigma}(\tau'')})$ and there is an open subset $U'\subset Z$ such that $P_{U'}:=P\times_Z U'\cong X_{\mathrm{Star}_{\sigma}(\tau'')}\times U'$. The birational morphism $X_{\mathrm{Star}_{\sigma}(\tau'')}\rightarrow \mathbb{P}^r$ defines a birational morphism $P_{U'}\rightarrow \mathbb{P}^r\times U'$. Let $D_{\mathbb{P}^r_{U'}}$ be the pushforward of $D''$ on $\mathbb{P}^r\times U'$, then by lemma \ref{toric singularities}, the restriction of $D_{\mathbb{P}^r_{U'}}$ to the general fiber of $\mathbb{P}^r\times U'\rightarrow U'$ is $D_{\Delta_r}$, which is the union of $r+1$ hyperplanes in $\mathbb{P}^r$ with transversal self intersections.
			
			Next we show that there is an open subset $U\subset U'$ such that $(\mathbb{P}^r\times U',D_{\mathbb{P}^r_{U'}})\times_{U'} U\cong (\mathbb{P}^r,D_{\Delta_r})\times U$. 
			Denote the two projections by $p:\mathbb{P}^r\times U'\rightarrow \mathbb{P}^r$ and $\pi:\mathbb{P}^r\times U'\rightarrow U'$. Define $L:=p^*\mathcal{O}_{\mathbb{P}^r}(1)\otimes \pi^* \mathcal{O}_{U'}$, then $L$ is $\pi$-very ample and $\pi_*L$ is a free sheaf of rank $r+1$. Choose an open subset $U\subset U'$ such that $D_{\mathbb{P}^r_{U'}}\times _{U'}U$ form a base of section of $\pi_*L$, then these sections define a trivialization $(\mathbb{P}^r\times U',D_{\mathbb{P}^r_{U'}})\times_{U'} U\cong (\mathbb{P}^r,D_{\Delta_r})\times U$.
			
			Finally because $K_{P_U}+\mathrm{Diff}_PD'|_{P_U}\sim_{\mathbb{Q},U} 0 $ and $K_{\mathbb{P}^r\times U}+D_{\mathbb{P}^r_{U'}}|_{\mathbb{P}^r\times U}\sim_{\mathbb{Q},U}0$ and $D_{\mathbb{P}^r_{U'}}$ is the pushforward of $D''$ on $\mathbb{P}^r\times U'$, then $(P_U,\mathrm{Diff}_PD')$ is crepant birationally equivalent to $(\mathbb{P}^r,D_{\Delta_r})\times U$.
		\end{proof}
	\end{lemma}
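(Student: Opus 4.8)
The plan is to trivialize the fibration $P\to Z$ over a dense open subset of the base and then identify its general fiber, together with the restricted boundary, with the standard toric pair $(\mathbb P^r,D_{\Delta_r})$. The starting point is Theorem \ref{fibration of toric morphism}: since $\psi$ is a refinement the underlying lattice map is an isomorphism, so $\psi$ is surjective and $\mathrm{Ind}(\sigma)=1$ by Theorem \ref{Index of toric morphism equal to 1}. As $\tau'$ lies in $\Sigma'_\sigma$ it is primitive (its only proper face $\{0\}$ is not in $\Sigma'_\sigma$ because $\dim\sigma\ge 1$), the image $Z=\tilde\psi(P)$ is the orbit closure $V(\sigma)$, and over its dense orbit $O_\sigma$ we get $P|_{O_\sigma}\cong O_\sigma\times X_{\mathrm{Star}_\sigma(\tau')}$. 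Thus the geometric general fiber of $P\to Z$ is already pinned down; what remains is to make the family literally trivial after shrinking, to replace $X_{\mathrm{Star}_\sigma(\tau')}$ by $\mathbb P^r$, and to keep track of the boundary.

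To see the fiber as $\mathbb P^r$, I would refine once more. Choose a $\mathbb Z$-basis $\tau'_1,\dots,\tau'_r$ of the lattice $\psi^{-1}((N_\sigma)_{\mathbb R})/(N_{\tau'})_{\mathbb R}$ in which $\mathrm{Star}_\sigma(\tau')$ lives, put $\tau'_{r+1}:=-(\tau'_1+\dots+\tau'_r)$ so that $\tau'_1,\dots,\tau'_{r+1}$ generate the fan of $\mathbb P^r$ attached to the standard simplex $\Delta_r$, lift the $\tau'_i$ to rays inside $\sigma$, and add them to $\Sigma'$ to obtain a refinement $\Sigma''$ with composite $X_{\Sigma''}\xrightarrow{g}X_{\Sigma'}\xrightarrow{\tilde\psi}X_\Sigma$; after a further subdivision if necessary, the relative star $\mathrm{Star}_\sigma(\tau'')$ of the strict transform $\tau''$ of $\tau'$ is a common refinement of $\mathrm{Star}_\sigma(\tau')$ and of the fan of $\mathbb P^r$, giving birational toric morphisms $X_{\mathrm{Star}_\sigma(\tau'')}\to X_{\mathrm{Star}_\sigma(\tau')}$ and $X_{\mathrm{Star}_\sigma(\tau'')}\to\mathbb P^r$. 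Writing $K_{X_{\Sigma''}}+P+D''\sim_{\mathbb Q}(g\circ\tilde\psi)^*(K_{X_\Sigma}+D)$, adjunction together with Lemma \ref{toric singularities} shows that the general fiber of $(P,\mathrm{Diff}_P D'')\to Z$ is the toric pair $(X_{\mathrm{Star}_\sigma(\tau'')},D_{\mathrm{Star}_\sigma(\tau'')})$, and, crucially because $P$ is a log canonical place, that every horizontal component of $\mathrm{Diff}_P D''$ has coefficient $1$; hence pushing $D''$ forward along $X_{\mathrm{Star}_\sigma(\tau'')}\to\mathbb P^r$ gives exactly $D_{\Delta_r}$ on the general fiber, a union of $r+1$ hyperplanes in general position.

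It then remains to trivialize. Over a dense open $U'\subset O_\sigma$ we have $P_{U'}:=P\times_Z U'\cong X_{\mathrm{Star}_\sigma(\tau'')}\times U'$, and the morphism $X_{\mathrm{Star}_\sigma(\tau'')}\to\mathbb P^r$ induces a birational morphism $P_{U'}\to\mathbb P^r\times U'$; let $D_{\mathbb P^r_{U'}}$ be the pushforward of $D''$. With projections $p,\pi$ and $L:=p^*\mathcal{O}_{\mathbb P^r}(1)$, the sheaf $\pi_*L$ is locally free of rank $r+1$, and after shrinking $U'$ to $U$ the $r+1$ sections cutting out $D_{\mathbb P^r_{U'}}$ form a frame; the resulting change of coordinates yields $(\mathbb P^r\times U,D_{\mathbb P^r_{U'}}|_{\mathbb P^r\times U})\cong(\mathbb P^r,D_{\Delta_r})\times U$. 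Finally, $K_{P_U}+\mathrm{Diff}_P D'|_{P_U}\sim_{\mathbb Q,U}0$ and $K_{\mathbb P^r\times U}+D_{\mathbb P^r_{U'}}|_{\mathbb P^r\times U}\sim_{\mathbb Q,U}0$, so the birational morphism $P_U\to\mathbb P^r\times U$ identifies these $\mathbb Q$-trivial log canonical divisors, and $(P_U,\mathrm{Diff}_P D')$ is crepant birationally equivalent to $(\mathbb P^r,D_{\Delta_r})\times U$, as claimed.

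I expect the main difficulty to lie in the boundary bookkeeping of the middle step: one must guarantee that, after the extra refinement and the pushforward to $\mathbb P^r$, no horizontal divisor coming from $D'$ survives with coefficient strictly less than $1$, and this is exactly where the hypothesis that $P$ is a log canonical place enters, through Lemma \ref{toric singularities}. The trivialization step is routine once the general fiber is known to be $(\mathbb P^r,D_{\Delta_r})$, but it too requires some care to make the whole family of hyperplane arrangements — not merely each fiber — constant after shrinking the base.
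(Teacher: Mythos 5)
Your proposal is correct and follows essentially the same route as the paper's own proof: the same second refinement $\Sigma''$ obtained by adding the rays $\tau'_1,\dots,\tau'_{r+1}$ lifted into $\sigma$, the same use of Lemma \ref{toric singularities} to force the horizontal boundary coefficients to be $1$ so that the pushforward to $\mathbb{P}^r$ of the fiber boundary is $D_{\Delta_r}$, the same trivialization over a shrunken base via the $r+1$ hyperplane sections of $\pi_*L$, and the same final crepant comparison of the two relatively $\mathbb{Q}$-trivial log divisors. The only differences are cosmetic (you make the primitivity of $\tau'$ and the possible extra subdivision needed for the common refinement explicit, which if anything tightens the argument).
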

	
	Next, we generalize Lemma \ref{log canonical center is projective bundle} to the log pair case and study the boundary part. 
	
	\begin{lemma}\label{toroidal blow up is generally trivial}
		Suppose $X$ is a normal variety, $U_X\subset X$ is a toroidal embedding and there is a $\mathbb{Q}$-divisor $D$ such that $(X,D)$ is sub-log canonical and $\Supp D\subset X\setminus U_X$. Let $f:Y\rightarrow X$ be a birational morphism extracting a log canonical place $P$ of $(X,D)$, write $K_Y+P+D_Y\sim_{\mathbb{Q}}f^*(K_X+D)$. Define $Z:=f(P)$ and denote the normalization of $P$ and $Z$ by $P^n,Z^n$. Then we have
		\begin{itemize}
			\item there is a $\mathbb{Q}$-divisor $D_{Z^n}$ on $Z^n$ such that $K_{P^n}+\mathrm{Diff}_{P^n}(D_Y)\sim_{\mathbb{Q}}(f|_P)^*(K_{Z^n}+D_{Z^n})$, and
			\item $(P^n,\mathrm{Diff}_{P^n}(D_Y))$ is crepant birationlly equivalent to $(Z^n,D_{Z^n})\times (\mathbb{P}^r,D_{\Delta^r})$, where $\mathbb{P}^r$ is the projective space of dimension $r:=\mathrm{dim}P-\mathrm{dim}Z$ also as the toric variety associated to the standard simplex $\Delta_r$, and $D_{\Delta_r}$ is the corresponding toric boundary.
		\end{itemize}
		
		\begin{proof}
			Pick a general point $x\in Z$, by the proof of Lemma \ref{log canonical center is projective bundle}, locally near $x$ we have the following Cartesian diagram
			$$
			\xymatrix{
				X'  \ar[r]^{\pi'} \ar[d]_{h} &  X_{\Sigma'} \ar[d]^{h_\sigma} \\
				X \ar[r]_\pi     &  X_{\Sigma}
			}
			$$
			such that 
			\begin{itemize}
				\item $X_\Sigma $ and $X_{\Sigma'}$ are toric varieties,
				\item $h_\sigma$ is a birational toric morphism,
				\item $\pi'$ and $\pi$ are \'etale, and
				\item $h$ is a birational morphism extracting $P$.
			\end{itemize}
			
			Because $\pi$ is \'etale, there is a $\mathbb{Q}$-divisor $D_{\Sigma}$ on $X_\Sigma$ such that $K_X+D\sim_{\mathbb{Q}}\pi^*(K_{X_\Sigma}+D_\Sigma)$ near $x$, and by the property of toroidal embedding, $\Supp D_{\Sigma}$ is torus invariant. Let $Z_\Sigma:=\pi(Z)$, by the Hurwitz's formula $Z_\Sigma$ is a log canonical center of $(X_\Sigma,D_\Sigma)$. Let $P_\Sigma$ be the divisor on $\Sigma'$ dominated by $ P$, write $K_{\Sigma'}+P_\Sigma+D'_\Sigma\sim_{\mathbb{Q}}h_\sigma^*(K_{X_\Sigma}+D_\Sigma)$. By Lemma \ref{toric blow up is generically trivial}, there is an open subset $U_\Sigma\subset Z_\Sigma$ such that $(P_\Sigma,\mathrm{Diff}_{P_\Sigma}(D'_\Sigma))\times_{Z_\Sigma} U_\Sigma $ is crepant birationally equivalent to $ (\mathbb{P}^r,D_{\Delta_r})\times U_\Sigma$.
			
			Write $K_{X'}+P+D'\sim_{\mathbb{Q}}h^*(K_X+D)$, then we have $K_{X'}+P+D'\sim_{\mathbb{Q}}\pi'^* h_\sigma^*(K_{X_\Sigma}+D_\Sigma)\sim_{\mathbb{Q}}\pi'^*(K_{X_\Sigma'}+P_\Sigma+D'_\Sigma)$. Restrict both side on $P^n$ and $P_\Sigma$, we have $K_{P^n}+\mathrm{Diff}_{P^n}(D')\sim_{\mathbb{Q}}\pi_{P} ^*(K_{P_\Sigma}+\mathrm{Diff}_{P_\Sigma}(D'_\Sigma))$. Let $U:=\pi^{-1}(U_\Sigma)$, because the diagram is Cartesian, $(P^n,\mathrm{Diff}_{\mathbb{P}^n}(D'))\times_Z U$ is isomorphic to $(P_\Sigma,\mathrm{Diff}_{P_\Sigma}(D'_\Sigma))\times_{Z_\Sigma} U$, hence crepant birationally equivalent to $(\mathbb{P}^r,D_{\Delta_r})\times U$.
			
			Because $K_{P^n}+\mathrm{Diff}_{P^n}(D')\sim_{\mathbb{Q},Z^n}0$, by the canonical bundle formula, there exists a generalized pair $(Z^n,D_{Z^n}+M)$ such that 
			$$K_{P^n}+\mathrm{Diff}_{P^n}(D')\sim_{\mathbb{Q},Z^n}(h|_{P^n})^*(K_{Z^n}+D_{Z^n}+M).$$
			Since $(P^n,\mathrm{Diff}_{P^n}(D'))\rightarrow Z^n$ is generically trivial, $M\sim_{\mathbb{Q}}0$. 
			
			Let $p_1$ denote the projection $\mathbb{P}^r\times Z^n\rightarrow \mathbb{P}^r$ and $p_2$ denote the projection $\mathbb{P}^r\times Z^n\rightarrow Z^n$. Since $K_{\mathbb{P}^r\times Z^n}=p_1^*K_{\mathbb{P}^r}+p_2^*K_{Z^n}$, we have $(\mathbb{P}^r\times Z^n,D_{\Delta_r}\times Z^n)$ is crepant birationally equivalent to $(\mathbb{P}^r,D_{\Delta_r})\times Z^n$, and because $(P^n,\mathrm{Diff}_{\mathbb{P}^n}(D'))\times_Z U$ is crepant birationally equivalent to $(\mathbb{P}^r,D_{\Delta_r})\times U$, then $(P^n,\mathrm{Diff}_{\mathbb{P}^n}(D'))$ is crepant birationally equivalent to $(\mathbb{P}^r,D_{\Delta_r})\times (Z^n,D_{Z^n})$.
		\end{proof}
	\end{lemma}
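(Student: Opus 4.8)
The plan is to reduce the whole statement to an \'etale-local toric computation near the generic point of $Z$, and then package the result using the canonical bundle formula. Since both the different $\mathrm{Diff}_{P^n}(\cdot)$ and the crepant birational equivalence class are unchanged if $Y$ is replaced by any other birational model extracting $P$, I may freely choose a convenient such model. First I would fix a general point $x\in Z$ and shrink $X$ to a Zariski neighbourhood of $x$; exactly as in the proof of Lemma~\ref{log canonical center is projective bundle}, the toroidal structure and Proposition~\ref{etale local model} produce an affine toric variety $X_\Sigma$, an \'etale morphism $\pi\colon X\to X_\Sigma$ with $U_X=\pi^{-1}(T)$ locally, and a Cartesian square
$$\xymatrix{
X' \ar[r]^{\pi'} \ar[d]_{h} & X_{\Sigma'} \ar[d]^{h_\sigma} \\
X \ar[r]_{\pi} & X_{\Sigma}
}$$
in which $\Sigma'$ is a refinement of $\Sigma$ containing the ray $\tau'$ that corresponds to the valuation $P$, $h_\sigma$ is the associated birational toric morphism, $\pi'$ is \'etale, and $h$ extracts $P$. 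So I take $Y=X'$ locally near $P$.

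Next I would push the data onto the toric side. Because $\pi$ is \'etale and, by the toroidal hypothesis, $\Supp D\subset X\setminus U_X$, there is a torus-invariant $\mathbb{Q}$-divisor $D_\Sigma$ on $X_\Sigma$ with $K_X+D\sim_{\mathbb{Q}}\pi^*(K_{X_\Sigma}+D_\Sigma)$ near $x$, and $(X_\Sigma,D_\Sigma)$ is sub-log canonical; by Hurwitz's formula $Z_\Sigma:=\pi(Z)$ is a log canonical centre of $(X_\Sigma,D_\Sigma)$. Let $P_\Sigma$ be the torus-invariant divisor on $X_{\Sigma'}$ dominated by $P$ and write $K_{X_{\Sigma'}}+P_\Sigma+D'_\Sigma\sim_{\mathbb{Q}}h_\sigma^*(K_{X_\Sigma}+D_\Sigma)$. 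Now I apply Lemma~\ref{toric blow up is generically trivial} to $(X_\Sigma,D_\Sigma)$ and the ray $\tau'$: it yields an open $U_\Sigma\subset Z_\Sigma$ over which $(P_\Sigma,\mathrm{Diff}_{P_\Sigma}(D'_\Sigma))$ is crepant birationally equivalent to $(\mathbb{P}^r,D_{\Delta_r})\times U_\Sigma$, with $r=\dim P-\dim Z$; here the coefficient computation of Lemma~\ref{toric singularities} guarantees that the horizontal part of $\mathrm{Diff}_{P_\Sigma}(D'_\Sigma)$ really restricts to the full toric boundary $D_{\Delta_r}$ on the general fibre. Pulling back the relation $K_{X'}+P+D'\sim_{\mathbb{Q}}\pi'^{*}(K_{X_{\Sigma'}}+P_\Sigma+D'_\Sigma)$ along $\pi'$ and restricting to normalizations by adjunction gives $K_{P^n}+\mathrm{Diff}_{P^n}(D')\sim_{\mathbb{Q}}\pi_P^{*}(K_{P_\Sigma}+\mathrm{Diff}_{P_\Sigma}(D'_\Sigma))$; hence over $U:=\pi^{-1}(U_\Sigma)$ the pair $(P^n,\mathrm{Diff}_{P^n}(D'))$ is, up to crepant birational equivalence, the constant product $(\mathbb{P}^r,D_{\Delta_r})\times U$. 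In particular $(P^n,\mathrm{Diff}_{P^n}(D'))\to Z^n$ is a birationally isotrivial log Calabi--Yau fibration.

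Finally I would deduce the two bullets. Since $(P^n,\mathrm{Diff}_{P^n}(D'))\to Z^n$ is crepant over $Z^n$ (so $K_{P^n}+\mathrm{Diff}_{P^n}(D')\sim_{\mathbb{Q},Z^n}0$), the canonical bundle formula produces a generalized pair $(Z^n,D_{Z^n}+M)$ with $K_{P^n}+\mathrm{Diff}_{P^n}(D')\sim_{\mathbb{Q}}(f|_{P^n})^{*}(K_{Z^n}+D_{Z^n}+M)$; birational isotriviality forces the moduli part $M$ to be torsion, hence $M\sim_{\mathbb{Q}}0$, which is the first bullet. For the second, I would observe that $K_{\mathbb{P}^r\times Z^n}=p_1^{*}K_{\mathbb{P}^r}+p_2^{*}K_{Z^n}$ so that $(\mathbb{P}^r\times Z^n,\,D_{\Delta_r}\times Z^n)$ is crepant birationally equivalent to $(\mathbb{P}^r,D_{\Delta_r})\times(Z^n,D_{Z^n})$, and then combine this with the generic triviality over $U$: a crepant birational map over the generic point of $Z^n$ between two pairs each of which is $\mathbb{Q}$-trivial over $Z^n$ spreads out to a global crepant birational equivalence, giving $(P^n,\mathrm{Diff}_{P^n}(D'))\sim_{\mathrm{crep}}(Z^n,D_{Z^n})\times(\mathbb{P}^r,D_{\Delta_r})$. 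I expect the main obstacle to be the final two points of this last step: verifying that the moduli $b$-divisor of the canonical bundle formula vanishes for this isotrivial fibration, and upgrading the generic-fibre triviality to an honest crepant birational equivalence with the global product over $Z^n$ (rather than merely over $U$); the \'etale-toric bookkeeping and the compatibility of $\mathrm{Diff}$ with \'etale base change are routine by contrast.
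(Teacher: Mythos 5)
Your proposal follows essentially the same route as the paper: the \'etale-local reduction to the toric model via Proposition~\ref{etale local model} and the Cartesian square from Lemma~\ref{log canonical center is projective bundle}, the application of Lemma~\ref{toric blow up is generically trivial} to get generic triviality of $(P_\Sigma,\mathrm{Diff}_{P_\Sigma}(D'_\Sigma))$ over $Z_\Sigma$, and then the canonical bundle formula with the moduli part killed by generic triviality, followed by the product comparison $K_{\mathbb{P}^r\times Z^n}=p_1^*K_{\mathbb{P}^r}+p_2^*K_{Z^n}$. The points you flag as the main obstacles (vanishing of $M$ and spreading the generic triviality to a global crepant equivalence) are handled in the paper at exactly the same level of detail you propose, so your argument is correct and matches the paper's proof.
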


	\begin{proof}[Proof of Theorem \ref{crepant birational boundedness}]
		We use the same notation as in the proof of Theorem \ref{Main theorem, flat morphism}. Let $h:\bar{X}\rightarrow \bar{Y}$ be a birational morphism such that $\bar{P}$ is a divisor on $\bar{X}$, write $K_{\bar{X}}+\bar{\Lambda}\sim_{\mathbb{Q}}h^*(K_{\bar{Y}}+\bar{D}_{\bar{Y}}+\bar{Y}_{\bar{0}})$. We claim $(\bar{X},\bar{\Lambda})$ is the desired sub-pair.
		
		It is easy to see $(\bar{X},\bar{\Lambda})$ satisfies all the properties except the last one, so we only need to show that $(\bar{P},\mathrm{Diff}_{\bar{P}}(\bar{\Lambda}-\bar{P}))$ is crepant birationally bounded. 
		By Lemma \ref{toroidal blow up is generally trivial}, we have $(\bar{P},\mathrm{Diff}_{\bar{P}}(\bar{\Lambda}-\bar{P}))$ is crepant birationally equivalent to $(\mathbb{P}^r,D_{\Delta_r})\times (Z^n,D_{Z^n})$. Because $(\mathbb{P}^r,D_{\Delta_r})$ is fixed, we only need to show that $(Z^n,D_{Z^n})$ is log bounded.
		
		Because $K_{\bar{X}}+\bar{\Lambda}\sim_{\mathbb{Q}}h^*(K_{\bar{Y}}+\bar{D}_{\bar{Y}}+\bar{Y}_{\bar{0}})$, by adjunction, $Z^n$ is a log canonical center of $(\bar{Y},\bar{D}'_{\bar{Y}})|_{\bar{Y}_{\bar{0}}}\cong (\bar{\Cy}_s,\bar{\Cd}'_{\bar{\Cy}_s})$.
		Then apply Kawamata's subadjunction theorem on $Z^n$, we have $(K_{\bar{\Cy}_s}+\bar{\Cd}_{\bar{\Cy}_s})|_{Z^n}=K_{Z^n}+D_{Z^n}$. Because
		by Lemma \ref{lc centers form a bounded family}, $Z^n$ is in a bounded family, then the sub-adjunction shows that $(Z^n,D_{Z^n})$ is log bounded.

	\end{proof}

	\nocite{*}
	

\begin{thebibliography}{PTW02}
		
		
		\bibitem[AK00]{AK00} D. Abramovich and K. Karu, \textit{Weak semistable reduction in characteristic 0}, Invent. Math. \textbf{139} (2000), no. 2, 241--273.
		
		\bibitem[ALT19]{ALT19} K. Adiprasito, G. Liu and M. Temkin, \textit{Semistable reduction in characteristic 0}, Séminaire Lotharingien de Combinatoire \textbf{82B} (2019).
		
		
		
		
		
		
		\bibitem[Bir12]{Bir12} C. Birkar, \textit{Existence of log canonical flips and a special LMMP}, Pub. Math. IHES., \textbf{115} (2012), 325--368.
		
		
		\bibitem[Bir18]{Bir18} C. Birkar, \textit{Log Calabi-Yau fibrations}, arXiv: 1811.10709v2.
		
		\bibitem[Bir19]{Bir19} C. Birkar, \textit{Anti-pluricanonical systems on Fano varieties}. Ann. of Math. (2), \textbf{190} (2019), 345--463.
		
		
		
		
		
		\bibitem[Bir21]{Bir21} C. Birkar, \textit{Singularities of linear systems and boundedness of Fano varieties}, Ann. of Math. \textbf{193} (2021), no. 2, 347--405.
		
		\bibitem[Bir22]{Bir22} C. Birkar, \textit{Moduli of algebraic varieties}, arXiv:2211.11237.
		
		\bibitem[Bir23]{Bir23} C. Birkar, \textit{Geometry and moduli of polarised varieties}, Pub. Math. IHES., (2023).
		
		
		\bibitem[BCHM10]{BCHM10} C. Birkar, P. Cascini, C. D. Hacon, and J. M\textsuperscript{c}Kernan, \textit{Existence of minimal models for varieties of log general type}, J. Amer. Math. Soc. \textbf{23} (2010), no. 2, 405--468.
		
		
		
		\bibitem[BQ23]{BQ23} C. Birkar and S. Qu, \textit{Bounding irrationality of degenerations of Fano fibrations}. In preparation.
		
		
		
		\bibitem[CLS11]{CLS11} D. A. Cox, J. B. Little, H. K. Schenck, \textit{Toric Varieties}, Graduate Studies in Mathematics, \textbf{124} (2011).
		
		
		
		
		
		
		
		
		
		
		
		
		
		
		
		
		
		
		
		
		
		
		
		
		
		
		
		
		
		\bibitem[HLY02]{HLY02} Y. Hu, C. H. Liu, and S. T. Yau , \textit{Toric morphisms and fibrations of toric Calabi-Yau hypersurfaces,} Advances in Theoretical and Mathematical Physics, 6(3), 457-505.
		
		
		
		
		
		
		\bibitem[HX13]{HX13} C. D. Hacon and C. Xu, \textit{Existence of log canonical closures}, Invent. Math. \textbf{192} (2013), no. 1, 161--195.
		
		\bibitem[HX15]{HX15} C. D. Hacon and C. Xu, \textit{Boundedness of log Calabi-Yau pairs of Fano type}, Math. Res. Lett. \textbf{22} (2015), no. 6, 1699--1716.
		
		
		
		
		
		
		
		
		
		
		
		
		
		
		
		
		
		
		
		
		
		
		
		
		\bibitem[Kol13]{Kol13} J. Koll\'{a}r, \textit{Singularities of the minimal model program}, Cambridge Tracts in Math. \textbf{200} (2013),  Cambridge Univ. Press. With a collaboration of S\'andor Kov\'acs.
		
		\bibitem[Kol23]{Kol23} J. Koll\'{a}r, \textit{Families of Varieties of General Type}, Cambridge Univ. Press. (2023)
		
		
		\bibitem[KKMS73]{KKMS73} G. Kempf, F. Knudsen, D. Mumford, B. Saint-Donat, \textit{Toroidal Embeddings 1}, Springer Berlin, Heidelberg, 24 September, (1973).
		
		
		\bibitem[KM98]{KM98} J. Koll\'{a}r and S. Mori, \textit{Birational geometry of algebraic varieties}, Cambridge Tracts in Math. \textbf{134} (1998), Cambridge Univ. Press.
		
		
		
		\bibitem[Kul77]{Kul77} V. S. Kulikov, \textit{Degenerations of K3 surfaces and Enriques surfaces}. Izv. Akad. Nauk SSSR Ser. Math., \textbf{41} (1997), 1008-1042. 
		
		\bibitem[KSB88]{KSB88} J. Koll\'{a}r and Shepherd-Barron, \textit{Thressfolds and deformations of surface singularities}. Invent. Math., \textbf{91}(2) (1988), 299-338.
		
		\bibitem[KX20]{KX20} J. Koll\'{a}r and CY. Xu, \textit{Moduli of Polarized Calabi-Yau Pairs}. Acta Mathematica Sinica, English Series. (2020) Jun 1;36(6):631-637.
		
		
		\bibitem[Laz04]{Laz04} R. Lazarsfeld, \textit{Positivity in algebraic geometry. II. Positivity for vector bundles and multiplier ideals.} Ergebnisse der Mathematik und ihrer Grenzgebiete. \textbf{3}. Folge.
		
		
		
		
		
		
		
		
		
		
		\bibitem[Man91]{Man91} M. Manetti, \textit{Normal degenerations of the complex projective plane}, J. Reine Angew. Math. \textbf{419} (1991): 89-118.
		
		
		
		
		
		
		
	\end{thebibliography}
\end{document}